\numberwithin{equation}{section}
\newtheorem{theorem}{Theorem}[section]
\newtheorem{lemma}[theorem]{Lemma}
\newtheorem{proposition}[theorem]{Proposition}
\newtheorem{corollary}[theorem]{Corollary}
\newtheorem{observation}[theorem]{Observation}
\newenvironment{customthm}[1]
  {\innercustomthm}
  {\endinnercustomthm}
\theoremstyle{definition}
\newtheorem{definition}[theorem]{Definition}
\newtheorem{example}[theorem]{Example}
\theoremstyle{remark}
\newtheorem{remark}[theorem]{Remark}
\newcommand{\nonconsec}[2]{\prescript{\circlearrowleft}{}{\mathbf{I}_{#1}^{#2}}}
\newcommand{\derset}[2]{{\tilde{\mathbf{I}}_{#1}^{#2}}}
\newcommand{\walk}{\dashrightarrow}
\let\emptyset\varnothing 
\newcommand{\st}{\,\middle\vert\,} 
\newcommand{\set}[1]{\left\{\,#1\,\right\}} 
\newcommand{\onevec}{\mathbf{1}} 
\newcommand{\ivec}{E_{i}} 
\newcommand{\rvec}{r\ivec} 
\newcommand{\sv}[1]{sE_{#1}} 
\newcommand{\svec}{s\ivec}
\newcommand{\tqdn}{\tilde{Q}^{(d, n)}}
\newcommand{\qdn}{Q^{(d, n)}}
\newcommand{\seg}[2]{\overline{#1#2}} 
\newcommand{\darcs}[1]{\mathsf{d}\text{-}\mathsf{arcs}(#1)} 
\newcommand{\swr}{\taurus} 
\DeclareMathOperator{\conv}{conv} 
\newcommand\@dotsep{4.5}
\def\@tocline#1#2#3#4#5#6#7{\relax
  \ifnum #1>\c@tocdepth 
  \else
    \par \addpenalty\@secpenalty\addvspace{#2}%
    \begingroup \hyphenpenalty\@M
    \@ifempty{#4}{%
      \@tempdima\csname r@tocindent\number#1\endcsname\relax
    }{%
      \@tempdima#4\relax
    }%
    \parindent\z@ \leftskip#3\relax \advance\leftskip\@tempdima\relax
    \rightskip\@pnumwidth plus1em \parfillskip-\@pnumwidth
    #5\leavevmode\hskip-\@tempdima{#6}\nobreak
    \leaders\hbox{$\m@th\mkern \@dotsep mu\hbox{.}\mkern \@dotsep mu$}\hfill
    \nobreak
    \hbox to\@pnumwidth{\@tocpagenum{\ifnum#1=1\fi#7}}\par
    \nobreak
    \endgroup
  \fi}
\renewcommand\csname r@tocindent0\endcsname{0pt}
\def\l@subsection{\@tocline{2}{0pt}{2.5pc}{5pc}{}}
\title{Quiver combinatorics for higher-dimensional triangulations}
\author{Nicholas J. Williams}
\email{williams@math.uni-koeln.de}
\address{Abteilung Mathematik, Department Mathematik/Informatik der Universit\"at zu K\"oln, Weyertal 86-90, 50931 Cologne, Germany}
\subjclass[2020]{Primary: 52B05; Secondary: 05E10, 52B11}
\keywords{Cyclic polytopes, triangulations, quivers}
\thanks{This paper forms part of my PhD studies. I would like to thank my supervisor Professor Sibylle Schroll for her continuing help and attention, and Hugh Thomas for interesting conversations. I would also like to thank the Isaac Newton Institute for Mathematical Sciences, Cambridge, for support and hospitality during the programme \textit{Cluster algebras and representation theory} where work on this paper was undertaken. This work was supported by EPSRC grant no EP/K032208/1.}
\begin{document}

\begin{abstract}
We investigate the combinatorics of quivers that arise from triangulations of even-dimensional cyclic polytopes. Work of Oppermann and Thomas pinpoints such quivers as the prototypes for higher-dimensional cluster theory. We first show that a $2d$-dimensional triangulation has no interior $(d + 1)$-simplices if and only if its quiver is a cut quiver of type $A$, in the sense of Iyama and Oppermann. This is a higher-dimensional generalisation of the fact that triangulations of polygons with no interior triangles correspond to orientations of an $A_{n}$ Dynkin diagram. An application of this first result is that the set of triangulations of a $2d$-dimensional cyclic polytope with no interior $(d + 1)$-simplices is connected via bistellar flips---the higher-dimensional analogue of flipping a diagonal inside a quadrilateral. In dimensions higher than 2, bistellar flips cannot be performed at all locations in a triangulation. Our second result gives a quiver-theoretic criterion for performing bistellar flips on a triangulation of a $2d$-dimensional cyclic polytope. This provides a visual tool for studying mutability of higher-dimensional triangulations and points towards what a theory of higher-dimensional quiver mutation could look like. Indeed, we apply this result to give a rule for mutating cut quivers at vertices which are not necessarily sinks or sources.
\end{abstract}

\maketitle

\tableofcontents

\section{Introduction}

In this paper we investigate the combinatorics of quivers associated to triangulations of even-dimensional cyclic polytopes. As shown by Oppermann and Thomas \cite{ot}, such quivers provide the prototype for higher-dimensional cluster theory.

Since their introduction \cite{fz1}, cluster algebras have generated substantial amounts of fruitful research touching many areas of mathematics, including dynamical systems \cite{fz-laurent,cs_cube,speyer_oct}, Poisson geometry \cite{gsv03,gsv05}, and Teichm\"uller theory \cite{fg06,fg09}. A cluster algebra can be given by choosing a quiver with a variable assigned to each vertex, and subsequently generating new quivers by a process of mutation, with new variables given from old variables via ``exchange relations''. A remarkable result is that the cluster algebras of finite cluster type are precisely those coming from Dynkin diagrams, just as in the Cartan--Killing classification \cite{fz2}. Two particular topics which are connected with cluster algebras are surfaces \cite{fst} and representation theory of algebras \cite{ccs,bmrrt}. Surfaces can be used to produce cluster algebras via triangulations, whilst in representation theory, cluster algebras are categorified via cluster categories.

A particularly simple example of a cluster algebra coming from a surface is the cluster algebra of type $A_{n}$, where the clusters are in bijection with triangulations of a convex $(n + 3)$-gon \cite{fz1}. Mutation of clusters corresponds to flipping a diagonal inside a quadrilateral. The quiver of a cluster can be easily constructed from the triangulation by drawing arrows between neighbouring arcs. In the type $A$ cluster category, these quivers are the Gabriel quivers of the endomorphism algebras of the corresponding cluster-tilting objects.

An active area of research within representation theory is higher Auslander--Reiten theory, which was introduced by Iyama in a series of papers \cite{iy-high-ar,iy-aus,iy-clus}. This subject studies subcategories of module categories which behave like higher-dimensional versions of abelian categories, and shows that several classical results can be generalised to them. Higher Auslander--Reiten theory has found beautiful connections with other areas of mathematics, such as symplectic geometry \cite{djl}, non-commutative algebraic geometry \cite{himo}, and combinatorics \cite{ot,njw-hst}.

Cluster theory and higher Auslander--Reiten theory were connected in \cite{ot}, where higher-dimensional cluster phenomena were discovered for type $A$. Here higher-dimensional cluster categories were defined, with the cluster-tilting objects corresponding to triangulations of even-dimensional cyclic polytopes. Mutation of clusters now corresponds to bistellar flips---the higher-dimensional analogue of flipping a polygon inside a quadrilateral.

Higher-dimensional cluster theory remains poorly understood. A higher cluster algebra has yet to be defined---if such a definition is indeed possible. It would be remarkable if cluster algebras were the two-dimensional instance of a more general phenomenon. The necessary ingredients for a higher cluster algebra would be a rule for quiver mutation and an exchange relation to produce new cluster variables after mutation. Whilst higher tropical exchange relations were exhibited in \cite{ot}, it is known that na\"ively detropicalising these relations does not work.

Triangulations of even-dimensional cyclic polytopes present themselves as the guide for how the higher-dimensional quiver combinatorics ought to work. Just as in the classical type $A$ case, the quiver of a cluster arises both from the endomorphism algebra of the corresponding cluster-tilting object, and may also be constructed from the corresponding triangulation: the vertices of the quiver correspond to the internal $d$-simplices of the triangulation, which we refer to as \emph{$d$-arcs}, with arrows between the $d$-arcs that are closest to each other.

We investigate the information encoded by the quiver associated to a triangulation of a $2d$-dimensional cyclic polytope. The best-understood quivers are those known as \emph{cut quivers}, which were introduced in \cite{io}. These quivers have a rule for mutation at sinks and sources \cite{io}. For $d = 1$, these cut quivers are precisely orientations of the $A_{n}$ Dynkin diagram. Our first result shows that cut quivers correspond precisely to triangulations with no interior $(d + 1)$-simplices and that these are also exactly the triangulations whose quivers are acyclic. Hence, this is a higher-dimensional generalisation of the fact that a triangulation of a convex polygon has no internal triangles if and only if its quiver is acyclic, and in this case the quiver is an orientation of the $A_{n}$ Dynkin diagram.

\begin{customthm}{A}[Theorem~\ref{thm:interior}]\label{thm:intro:interior}
A triangulation of a $2d$-dimensional cyclic polytope has no interior $(d + 1)$-simplices if and only if its quiver is acyclic, in which case its quiver is a cut quiver of type $A$.
\end{customthm}

An application of this result is that the set of triangulations of a $2d$-dimensional cyclic polytope without internal $(d + 1)$-simplices is connected via bistellar flips.

Unlike for the 2-dimensional case, for $d > 1$ it is not possible to perform a bistellar flip at every internal $d$-simplex of a $2d$-dimensional triangulation, or, equivalently, at every vertex of its quiver. This is an important difference with classical cluster theory, where a key property is that one can mutate a given cluster at every vertex of its quiver. This feature makes higher-dimensional cluster theory much more difficult to work with. Our second result uses the quiver of a triangulation to give a combinatorial criterion for identifying which $d$-simplices are \emph{mutable}---that is, admit a bistellar flip. We show how the arrows in the quiver can be partitioned into paths which we call \emph{maximal retrograde paths}, and prove the following theorem.

\begin{customthm}{B}[Theorem~\ref{thm:retro}]\label{thm:intro:criterion}
Let $\mathcal{T}$ be a triangulation of $C(n + 2d + 1,2d)$. A $d$-arc of $\mathcal{T}$ is mutable if and only if it is not in the middle of a maximal retrograde path.
\end{customthm}

This theorem gives a quiver-theoretic criterion for mutability, and hence points towards what a theory of higher-dimensional quiver mutation \cite{fz2} could look like. Other extensions of quiver mutation have been of interest in the literature, such as to ice quivers \cite{press_frozen}. Moreover, this provides a visual way of understanding mutability for higher-dimensional triangulations, and makes it easier to compute bistellar flips of higher-dimensional triangulations by hand. In the case of polygon triangulations, all retrograde paths are of length one, so that the criterion imposes no restriction and all arcs are mutable. As an application of this theorem, we give a rule for mutating cut quivers at vertices which are not necessarily sinks or sources.

This paper is structured as follows. In Section~\ref{sect:back} we give background to the paper, predominantly on triangulations of even-dimensional cyclic polytopes. In Section~\ref{sect:triangs} we consider triangulations of $2d$-dimensional cyclic polytopes without interior $(d + 1)$-simplices and prove Theorem~\ref{thm:intro:interior}. In Section~\ref{sect:mut} we study mutability of $d$-simplices in terms of quivers and prove Theorem~\ref{thm:intro:criterion}.

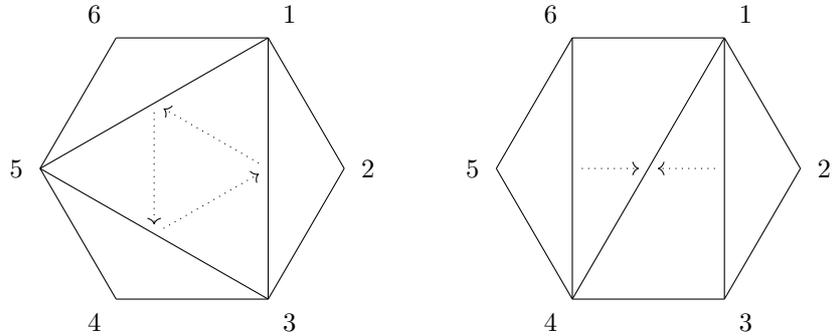
\begin{figure}
\caption{Triangulations of polygons with and without interior triangles}\label{fig:int_triang}
\[
\begin{tikzpicture}

\begin{scope}[shift={(-3,0)}]

\coordinate(2) at (0:2);
\node at (2) [right = 1mm of 2] {2};
\coordinate(1) at (60:2);
\node at (1) [above right = 1mm of 1] {1};
\coordinate(6) at (120:2);
\node at (6) [above left = 1mm of 6] {6};
\coordinate(5) at (180:2);
\node at (5) [left = 1mm of 5] {5};
\coordinate(4) at (240:2);
\node at (4) [below left = 1mm of 4] {4};
\coordinate(3) at (300:2);
\node at (3) [below right = 1mm of 3] {3};

\node(13) at ($(1)!0.5!(3)$) {};
\node(35) at ($(3)!0.5!(5)$) {};
\node(51) at ($(5)!0.5!(1)$) {};

\draw (1) -- (2);
\draw (2) -- (3);
\draw (3) -- (4);
\draw (4) -- (5);
\draw (5) -- (6);
\draw (6) -- (1);

\draw (1) -- (3);
\draw (3) -- (5);
\draw (1) -- (5);

\draw[->, dotted] (13) -- (51);
\draw[->, dotted] (51) -- (35);
\draw[->, dotted] (35) -- (13);

\end{scope}


\begin{scope}[shift={(3,0)}]

\coordinate(2) at (0:2);
\node at (2) [right = 1mm of 2] {2};
\coordinate(1) at (60:2);
\node at (1) [above right = 1mm of 1] {1};
\coordinate(6) at (120:2);
\node at (6) [above left = 1mm of 6] {6};
\coordinate(5) at (180:2);
\node at (5) [left = 1mm of 5] {5};
\coordinate(4) at (240:2);
\node at (4) [below left = 1mm of 4] {4};
\coordinate(3) at (300:2);
\node at (3) [below right = 1mm of 3] {3};

\node(13) at ($(1)!0.5!(3)$) {};
\node(14) at ($(1)!0.5!(4)$) {};
\node(64) at ($(6)!0.5!(4)$) {};

\draw (1) -- (2);
\draw (2) -- (3);
\draw (3) -- (4);
\draw (4) -- (5);
\draw (5) -- (6);
\draw (6) -- (1);

\draw (1) -- (3);
\draw (1) -- (4);
\draw (6) -- (4);

\draw[->, dotted] (13) -- (14);
\draw[->, dotted] (64) -- (14);

\end{scope}

\end{tikzpicture}
\]
\end{figure}

\section{Background}\label{sect:back}

\subsection{Conventions}

We use $[m]$ to denote the set $\{1, \dots, m\}$. By $\binom{[m]}{k}$ we mean the set of subsets of $[m]$ of size $k$. For $l \in [m]$, using the notation of \cite{ops}, we use $<_{l}$ to denote the cyclically shifted order on $[m]$ given by \[l <_{l} l+1 <_{l} < \dots <_{l} m-1 <_{l} m <_{l} 1 <_{l} \dots <_{l} l-1 .\] For $r \geqslant 3$, $a_{1} < \dots < a_{r}$ is a \emph{cyclic ordering} if there is an $l \in [n + 2d + 1]$ such that $a_{1} <_{l} \dots <_{l} a_{r}$.

Throughout this paper, we will often need to change the ordering on $[m]$ to $<_{l}$ for some $l$. We will usually do this tacitly, rather than explicitly writing $<_{l}$ in place of $<$. Similarly, we will usually be tacitly using arithmetic modulo $n + 2d + 1$, where this will be the number of vertices of our cyclic polytope. Hence, we shall write things like $j = i - 1$, when we mean $j \equiv i - 1 \pmod{n + 2d + 1}$. Our convention here will also be that our equivalence-class representatives for arithmetic modulo $n + 2d + 1$ will be $[n + 2d + 1]$ rather than $\{0, 1, \dots, n + 2d\}$. This is because we shall label the vertices of our cyclic polytope by $[n + 2d + 1]$.

Finally, given a tuple $A \in [m]^{k + 1}$, we will denote the elements of $A$ by $(a_{0}, a_{1}, \dots, a_{k})$ where $a_{0} <_{l} a_{1} <_{l} \dots <_{l} a_{k}$ in a cyclically shifted order to be determined by the context. By default this will be the usual order on $[m]$. Because we wish sometimes to change this order, we are also tacitly identifying tuples which are the same up to cyclically shifted reordering. The same applies to other letters of the alphabet: the upper case letter denotes the tuple; the lower case letter is used for the entries, which are ordered according to their index, which starts from 0.

\subsection{Triangulations of cyclic polytopes}

Cyclic polytopes should be thought of as higher-dimensional analogues of convex polygons. General introductions to this class of polytopes can be found in \cite[Lecture 0]{ziegler}, \cite[4.7]{gruenbaum}, \cite[Section 6.1]{lrs} and \cite[Chapter VI]{bar}. Let $n \geqslant 0$ and $\delta \geqslant 1$ and consider the curve defined by $p_{t}=(t, t^{2}, \dots , t^{\delta}) \subset \mathbb{R}^{\delta}$ for $t \in \mathbb{R}$, which is known as the \emph{moment curve}. Choose $t_{1}, t_{2}, \dots , t_{n + \delta + 1} \in \mathbb{R}$ such that $t_{1} < t_{2} < \dots < t_{m}$. The convex hull $\conv\{ p_{t_{1}}, p_{t_{2}}, \dots , p_{t_{n + \delta + 1}} \}$ is a \emph{cyclic polytope} $C(n + \delta + 1, \delta)$. A \emph{(geometric) triangulation} of a cyclic polytope $C(n + \delta + 1, \delta)$ is a collection of (geometric) $\delta$-simplices whose interiors are pairwise disjoint and whose union is $C(n + \delta + 1, \delta)$.

A \emph{facet} of $C(n + \delta + 1,\delta)$ is a face of codimension one. A \emph{circuit} of a cyclic polytope $C(n + \delta + 1,\delta)$ is a pair, $(Z_{+}, Z_{-})$, of sets of vertices of $C(n + \delta + 1,\delta)$ which are inclusion-minimal with respect to the property $\mathrm{conv}(Z_{+}) \cap \mathrm{conv}(Z_{-}) \neq \emptyset$.

The circuits and facets of a cyclic polytope are independent of its particular geometric realisation given by the choice of set of points on the moment curve, by \cite{gale,breen}. This implies that whether or not a collection of $(\delta+1)$-subsets of $[n + \delta + 1]$ forms a triangulation of $C(n + \delta + 1, \delta)$ is likewise independent of the particular geometric realisation. Hence, in this paper we primarily consider triangulations combinatorially: a \emph{combinatorial triangulation} of $C(n + \delta + 1, \delta)$ is a collection of ordered $(\delta + 1)$-tuples with entries in $[n + \delta + 1]$ which gives a geometric triangulation of $C(n + \delta + 1, \delta)$. Unless otherwise specified, when we write `triangulation' we mean a combinatorial triangulation. Likewise, if we say that $A$ is a $k$-simplex, we mean that $A$ is an ordered $(k + 1)$-tuple with entries in $[n + \delta + 1]$. We write $|A|$ to denote the geometric realisation of $A$ as a geometric simplex which is the convex hull of $k + 1$ points on the moment curve. 

In this paper we are exclusively interested in triangulations of even-dimensional cyclic polytopes. These were given an elegant combinatorial description in \cite{ot}, which we now explain. This combinatorial description was in turn used to connect triangulations of even-dimensional cyclic polytopes to representation theory of algebras. We will sometimes comment on these connections, but this paper does not require the reader to be familiar with representation theory.

A $d$-simplex of a triangulation of $C(n + 2d + 1, 2d)$ is \emph{internal} if it does not lie within a facet of $C(n + 2d + 1, 2d)$. It is clear that a triangulation of a convex polygon is determined by the arcs of the triangulation; similarly, a triangulation of $C(n + 2d + 1,2d)$ is determined by the internal $d$-simplices of the triangulation, by a theorem of Dey \cite{dey}. For brevity, we refer to an internal $d$-simplex of a triangulation $\mathcal{T}$ of $C(n + 2d + 1, 2d)$ as a \emph{$d$-arc} of $\mathcal{T}$. We write $\darcs{\mathcal{T}}$ for the set of $d$-arcs of $\mathcal{T}$. We say that a $(d + 1)$-simplex of $\mathcal{T}$ is \emph{interior} if all of its facets are $d$-arcs.

In order to recall description of the triangulations of $C(n + 2d + 1,2d)$ from \cite{ot}, and for use later in the paper, we denote the following sets.
\begin{align*}
\derset{n + 2d + 1}{d} &= \set{(a_{0},\dots,a_{d}) \in \mathbb{Z}^{d+1} \st \parbox{5.5cm}{\begin{center}$\forall i \in \{0, 1, \dots ,d - 1\}, a_{i + 1} \geqslant a_{i} + 2 $\\  and  $a_{d} + 2 \leqslant a_{0} + n + 2d + 1$\end{center}}}; \\
\nonconsec{n + 2d + 1}{d} &= \derset{n + 2d + 1}{d} \cap [n + 2d + 1]^{d + 1}.
\end{align*}
Algebraically, $\derset{n + 2d + 1}{d}$ labels the $d$-cluster-tilting subcategory of the derived category of $A_{n}^{d}$, whilst $\nonconsec{n + 2d + 1}{d}$ labels its cluster category \cite{ot}.

A $d$-simplex $A \in [n + 2d + 1]^{d + 1}$ is a $d$-arc in $C(n + 2d + 1, 2d)$ if and only if $A \in \nonconsec{n + 2d + 1}{d}$. A $d$-simplex $A$ and a $d$-simplex $B$ are \emph{intertwining} if \[ a_{0} < b_{0} < a_{1} < b_{1} < \dots < a_{d} < b_{d}\] is a cyclic ordering, in which case we write $A \swr B$. Note that $A$ and $B$ being intertwining is therefore independent of the cyclically shifted order $<_{l}$ we may choose on $[n + 2d + 1]$. A collection of $d$-simplices is called \emph{non-intertwining} if no pair of its elements are intertwining. The circuits of the cyclic polytope $C(n + 2d + 1, 2d)$ are pairs of intertwining $d$-arcs.

There is a bijection between elements of $\nonconsec{n + 2d + 1}{d}$ and $d$-arcs of $C(n + 2d + 1, 2d)$, which induces a bijection between non-intertwining collections of $\binom{n + d - 1}{d}$ $d$-simplices in $\nonconsec{n + 2d + 1}{d}$ and triangulations of $C(n + 2d + 1, 2d)$ \cite[Theorem 2.3, Theorem 2.4]{ot}, via sending $\mathcal{T}$ to $\darcs{\mathcal{T}}$ (see also \cite[Remark 2.3]{njw-equal}).

Triangulations of cyclic polytopes can be \emph{mutated} by operations known as \emph{bistellar flips}. Triangulations $\mathcal{T}$ and $\mathcal{T}'$ of $C(n + 2d + 1,2d)$ are bistellar flips of each other if and only if $\darcs{\mathcal{T}}$ and $\darcs{\mathcal{T}'}$ have all but one element in common \cite[Theorem 4.1]{ot}.

\subsection{Cuts and slices}

We will later associate quivers to triangulations of even-dimensional cyclic polytopes. We will be particularly interested in when the quivers of triangulations take particular forms. Indeed, we now define the quivers which are higher analogues of orientations of the $A_{n}$ Dynkin diagram, following \cite{io}. Let $\qdn$ be the quiver with vertices \[Q_{0}^{(d,n)} := \set{(a_{0}, \dots, a_{d}) \in \mathbb{Z}_{\geqslant 0}^{d + 1} \st \sum_{i = 0}^{d}a_{i} = n - 1}\] and arrows \[Q_{1}^{(d, n)} := \set{A \to A + f_{i} \st A, A + f_{i} \in Q_{0}^{(d, n)}},\] where \[f_{i} = (\dots, 0, \overset{i}{-1}, \overset{i + 1}{1}, 0, \dots),\] with \[f_{d} = (1, 0, \dots, 0, -1).\] We say that arrows $A \to A + f_{i}$ are \emph{arrows of type $i$}. See Figure~\ref{fig:qdn_ex} for pictures of these quivers. A subset $C \subseteq Q_{1}^{(d, n)}$ is called \emph{cut} if it contains exactly one arrow from each $(d + 1)$-cycle in $Q^{(d, n)}$. Given a cut $C$, we write $Q_{C}^{(d, n)}$ for the quiver with arrows $Q_{1}^{(d, n)}\setminus C$ and refer to this as the \emph{cut quiver}. Examples of cut quivers can be seen in Figure~\ref{fig:qdn_cuts}. Note that the cut quivers of $Q^{(1, n)}$ are precisely the orientations of the $A_{n}$ Dynkin diagram. Hence, for $d > 1$, we think of cut quivers of $\qdn$ as higher analogues of orientations of the $A_{n}$ Dynkin diagram.

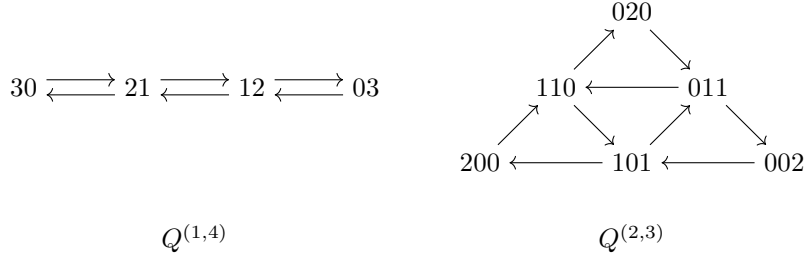
\begin{figure}
\caption{Examples of the quivers $Q^{(d, n)}$}\label{fig:qdn_ex}
\[
\begin{tikzpicture}


\begin{scope}[shift={(-3,1)},xscale=1.5]

\node(30) at (0,0) {30};
\node(21) at (1,0) {21};
\node(12) at (2,0) {12};
\node(03) at (3,0) {03};

\draw[transform canvas={yshift=1mm},->] (30) -- (21);
\draw[transform canvas={yshift=-1mm},->] (21) -- (30);

\draw[transform canvas={yshift=1mm},->] (21) -- (12);
\draw[transform canvas={yshift=-1mm},->] (12) -- (21);

\draw[transform canvas={yshift=1mm},->] (12) -- (03);
\draw[transform canvas={yshift=-1mm},->] (03) -- (12);

\node at (1.5,-2) {$Q^{(1,4)}$};

\end{scope}


\begin{scope}[shift={(3,0)}]

\node(200) at (0,0) {200};
\node(110) at (1,1) {110};
\node(020) at (2,2) {020};
\node(011) at (3,1) {011};
\node(002) at (4,0) {002};
\node(101) at (2,0) {101};

\draw[->] (200) -- (110);
\draw[->] (110) -- (020);
\draw[->] (020) -- (011);
\draw[->] (011) -- (002);
\draw[->] (002) -- (101);
\draw[->] (101) -- (200);
\draw[->] (110) -- (101);
\draw[->] (101) -- (011);
\draw[->] (011) -- (110);

\node at (2,-1) {$Q^{(2,3)}$};

\end{scope}

\end{tikzpicture}
\]
\end{figure}

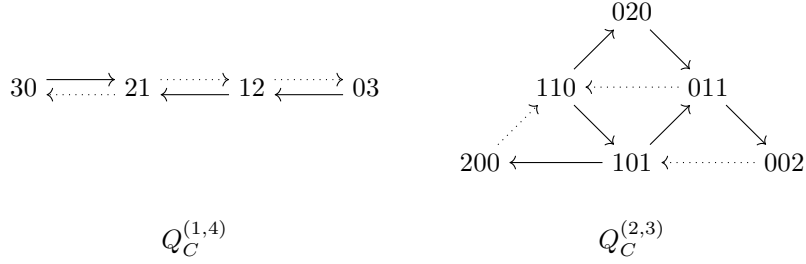
\begin{figure}
\caption{Cuts of the quivers $Q^{(d, n)}$}\label{fig:qdn_cuts}
\[
\begin{tikzpicture}


\begin{scope}[shift={(-3,1)},xscale=1.5]

\node(30) at (0,0) {30};
\node(21) at (1,0) {21};
\node(12) at (2,0) {12};
\node(03) at (3,0) {03};

\draw[transform canvas={yshift=1mm},->] (30) -- (21);
\draw[transform canvas={yshift=-1mm},->,dotted] (21) -- (30);

\draw[transform canvas={yshift=1mm},->,dotted] (21) -- (12);
\draw[transform canvas={yshift=-1mm},->] (12) -- (21);

\draw[transform canvas={yshift=1mm},->,dotted] (12) -- (03);
\draw[transform canvas={yshift=-1mm},->] (03) -- (12);

\node at (1.5,-2) {$Q_{C}^{(1,4)}$};

\end{scope}


\begin{scope}[shift={(3,0)}]

\node(200) at (0,0) {200};
\node(110) at (1,1) {110};
\node(020) at (2,2) {020};
\node(011) at (3,1) {011};
\node(002) at (4,0) {002};
\node(101) at (2,0) {101};

\draw[->,dotted] (200) -- (110);
\draw[->] (110) -- (020);
\draw[->] (020) -- (011);
\draw[->] (011) -- (002);
\draw[->,dotted] (002) -- (101);
\draw[->] (101) -- (200);
\draw[->] (110) -- (101);
\draw[->] (101) -- (011);
\draw[->,dotted] (011) -- (110);

\node at (2,-1) {$Q_{C}^{(2,3)}$};

\end{scope}

\end{tikzpicture}
\]
\end{figure}

Iyama and Oppermann show that cut quivers of $Q^{(d, n)}$ are precisely the quivers than can be realised as \emph{slices} of another family of quivers, denoted $\tqdn$, which we now define. Let $\tilde{Q}^{(d,n)}$ be the quiver with vertices \[\tilde{Q}_{0}^{(d,n)} := \derset{n+2d+1}{d}\] and arrows \[\tilde{Q}_{1}^{(d,n)} := \set{A \rightarrow A + \ivec \st A, A + \ivec \in \tilde{Q}_{0}^{(d,n)}},\] where \[\ivec:=(\dots, 0, \overset{i}{1}, 0, \dots).\] Given two $(d + 1)$-tuples $A, B \in \mathbb{Z}^{d + 1}$, we write $A \leqslant B$ if $a_{i} \leqslant b_{i}$ for all $i \in \{0, 1, \dots, d\}$. We use $A < B$ analogously. Note that, given $A, B \in \tilde{Q}_{0}^{(d, n)}$, there is a path $A \leadsto B$ in $\tilde{Q}^{(d, n)}$ if and only if $A \leqslant B$.

We denote by $\nu_{d}$ the automorphism of $\tilde{Q}^{(d,n)}$ given by $A \mapsto A - \onevec$. We denote by $\pi\colon \tqdn_{0} \rightarrow \nonconsec{n + 2d + 1}{d}$ the map given by \[A \mapsto \mathsf{sort}(\pi(a_{0}), \pi(a_{1}), \dots, \pi(a_{d})),\] where $\pi(a_{i}) := a_{i} \pmod{n + 2d + 1}$ and $\mathsf{sort}$ indicates that we should sort the tuple so that it is increasing. Note that the definition of $\derset{n + 2d + 1}{d}$ guarantees that $\pi(A)$ does not contain any repeated entries.

\begin{figure}
\caption{$\tilde{Q}^{(1,3)}$}\label{fig:q13}
\[
\scalebox{0.75}{
\begin{tikzpicture}

\node(zca) at (0,0){13};
\node(baz) at (0,2){04};
\node(aba) at (2,1){14};
\node(zcb) at (4,0){\color{blue}{24}};
\node(baa) at (4,2){15};
\node(abb) at (6,1){\color{blue}{25}};
\node(zcc) at (8,0){35};
\node(bab) at (8,2){\color{blue}{26}};
\node(abc) at (10,1){36};
\node(zcd) at (12,0){46};
\node(bac) at (12,2){37};

\node at (-1,0){\dots};
\node at (-1,1){\dots};
\node at (-1,2){\dots};

\node at (13,0){\dots};
\node at (13,1){\dots};
\node at (13,2){\dots};

\draw[->] (zca) -- (aba);
\draw[->] (aba) -- (baa);
\draw[->] (baz) -- (aba);
\draw[->] (aba) -- (zcb);
\draw[->] (baa) -- (abb);
\draw[->] (abb) -- (zcc);
\draw[->,blue] (zcb) -- (abb);
\draw[->,blue] (abb) -- (bab);
\draw[->] (bab) -- (abc);
\draw[->] (abc) -- (zcd);
\draw[->] (zcc) -- (abc);
\draw[->] (abc) -- (bac);

\end{tikzpicture}
}
\]
\end{figure}
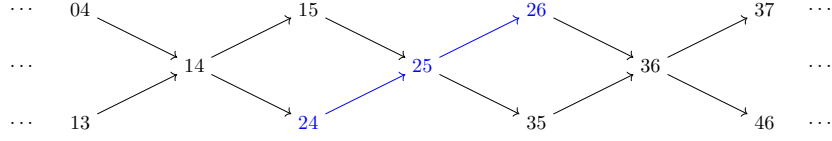

\begin{figure}
\caption{$\tilde{Q}^{(2, 3)}$}\label{fig:q23}
\[
\scalebox{0.7}{
\begin{tikzpicture}


\node(zzcz) at (0,0){135};
\node(azbz) at (6,0){146};
\node(bzaz) at (12,0){157};
\node(zabz) at (3,1){136};
\node(aaaz) at (9,1){147};
\node(zbaz) at (6,2){137};

\node(zzca) at (2,3){246};
\node(azba) at (8,3){257};
\node(bzaa) at (14,3){\color{blue}{268}};
\node(zaba) at (5,4){247};
\node(aaaa) at (11,4){\color{blue}{258}};
\node(zbaa) at (8,5){\color{blue}{248}};

\node(zzcb) at (4,6){\color{blue}{357}};
\node(azbb) at (10,6){\color{blue}{368}};
\node(bzab) at (16,6){379};
\node(zabb) at (7,7){\color{blue}{358}};
\node(aaab) at (13,7){369};
\node(zbab) at (10,8){359};

\node at (0,-1){\dots};
\node at (6,-1){\dots};
\node at (12,-1){\dots};

\node at (4,9){\dots};
\node at (10,9){\dots};
\node at (16,9){\dots};


\draw[->] (zzcz) -- (zabz);
\draw[->] (zabz) -- (azbz);
\draw[->] (azbz) -- (aaaz);
\draw[->] (aaaz) -- (bzaz);
\draw[->] (zabz) -- (zbaz);
\draw[->] (zbaz) -- (aaaz);

\draw[->] (zzca) -- (zaba);
\draw[->] (zaba) -- (azba);
\draw[->] (azba) -- (aaaa);
\draw[->,blue] (aaaa) -- (bzaa);
\draw[->] (zaba) -- (zbaa);
\draw[->,blue] (zbaa) -- (aaaa);

\draw[->,blue] (zzcb) -- (zabb);
\draw[->,blue] (zabb) -- (azbb);
\draw[->] (azbb) -- (aaab);
\draw[->] (aaab) -- (bzab);
\draw[->] (zabb) -- (zbab);
\draw[->] (zbab) -- (aaab);

\draw[->] (azbz) -- (zzca);
\draw[->] (bzaz) -- (azba);
\draw[->] (aaaz) -- (zaba);

\draw[->] (azba) -- (zzcb);
\draw[->,blue] (bzaa) -- (azbb);
\draw[->,blue] (aaaa) -- (zabb);

\end{tikzpicture}
}
\]
\end{figure}
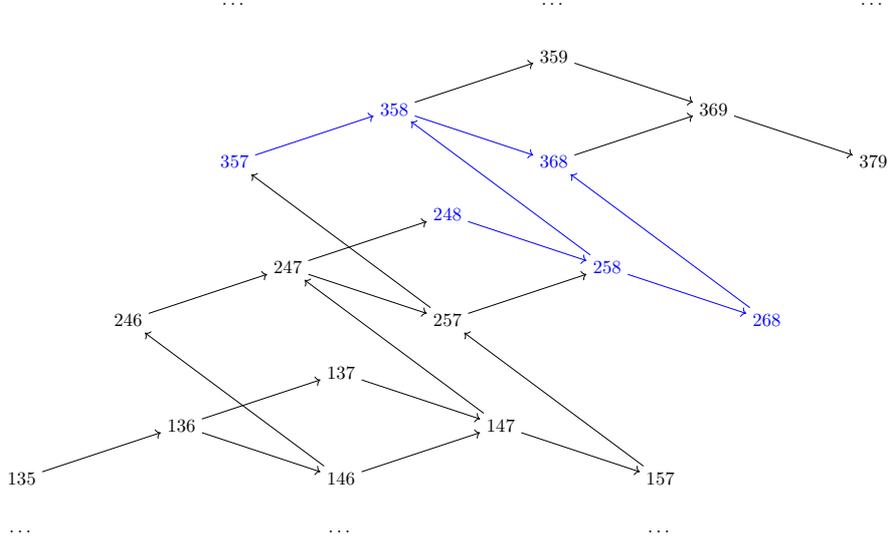

Following \cite[Definition 5.20]{io}, we define a \emph{slice} of $\tqdn$ to be a full subquiver $S$ of $\tqdn$ such that:
\begin{enumerate}
\item Any $\nu_{d}$ orbit in $\tqdn$ contains precisely one vertex which belongs to $S$.
\item $S$ is convex, i.e., for any path $P$ in $\tqdn$ connecting two vertices in $S$, all vertices appearing in $P$ belong to $S$.
\end{enumerate}
Slices are shown in blue in Figure~\ref{fig:q13} and Figure~\ref{fig:q23}.

The $\nu_{d}$-orbits of $\tqdn$ are in bijection with the vertices of $\qdn$. Given a slice $S$ of $\tqdn$, one can find a cut $C_{S}$ of $\qdn$ such that $Q_{C_{S}}^{(d, n)}$ is isomorphic to $S$ with the arrows $f_{i}$ of $Q_{C_{S}}^{(d, n)}$ corresponding to arrows $A \to A + E_{i}$ in $S$ \cite[Theorem 5.24]{io}.

\subsubsection{Mutation of cuts and slices}\label{sect:back:cut_slice:mut}

Cuts and slices can be mutated, as was defined in \cite{io}.
\begin{itemize}
\item Let $C$ be a cut of $\qdn$ and let $x$ be a source of $\qdn_{C}$. Define a subset $\mu_{x}^{+}(C)$ of $\qdn_{1}$ by removing all arrows in $C$ which end at $x$ and adding all arrows in $\qdn_{1}$ which begin at $x$. Dually, if $x$ is a sink of $\qdn_{C}$, define $\mu_{x}^{-}(C)$ by removing all arrows in $C$ which begin at $x$ and adding all arrows in $\qdn_{1}$ which end at $x$. By \cite[Proposition 5.14]{io}, we have that $\mu_{x}^{+}(C)$ and $\mu_{x}^{-}(C)$ are also cuts of $\qdn$.
\item Let $S$ be a slice of $\tqdn$. If $x$ is a source of $S$, then define a full subquiver $\mu_{x}^{+}(S)$ of $\tqdn$ by removing $x$ from $S$ and adding $\nu_{d}^{-1}x$ \cite[Definition 5.25]{io}. Dually, if $x$ is a sink of $S$, define a full subquiver $\mu_{x}^{-}(S)$ by removing $x$ and adding $\nu_{d}x$.
\end{itemize}
If $C_{S}$ is the cut corresponding to a slice $S$ then $C_{\mu_{x}^{+}{S}} = \mu_{x}^{+}(C_{S})$ and $C_{\mu_{x}^{-}{S}} = \mu_{x}^{-}(C_{S})$, provided $x$ is a source or sink, respectively. Here we abuse notation by using $x$ to refer both to the relevant vertex of $S$ and to the relevant vertex of $\qdn_{C_{S}}$.

\section{Triangulations without interior $(d+1)$-simplices}\label{sect:triangs}

In this section we prove our combinatorial description of triangulations of $C(n + 2d + 1, 2d)$ without interior $(d + 1)$-simplices and use this description to show that this class of triangulations is connected by bistellar flips.

\subsection{The quiver of a triangulation}

We first define the quiver of a triangulation, which is the higher-dimensional version of the quiver of a polygon triangulation---see, for instance, \cite[Section 3]{cdm-03} and \cite[Definition 2.12]{lkw-clus-intro}.

\begin{definition}\label{def:quiver}
We define the \emph{quiver} $Q(\mathcal{T})$ of $\mathcal{T}$ to be the following directed graph. The vertices are \[Q_{0}(\mathcal{T}) = \darcs{\mathcal{T}}.\] The arrows are given by $A \to B$ with $A \neq B$ such that $(A - \onevec) := (a_{0} - 1, \dots, a_{d} - 1) \swr B$ and there exists no $A' \in \darcs{\mathcal{T}}\setminus\{A, B\}$ with $(A - \onevec) \swr A'$ and $(A' - \onevec) \swr B$.
\end{definition}

We define the quiver in this way so that it coincides with the Gabriel quiver of the endmorphism algebra of the cluster-tilting object corresponding to the triangulation. The arrows mirror the description of the homomorphisms in the cluster category $\mathcal{O}_{A_{n}^{d}}$ from \cite{ot}. However, we now prove a simpler description of the quiver, given in Corollary~\ref{cor:quiv_desc}. In order to obtain this, we first make some observations about cyclic polytopes. Changing the ordering on on $[n + 2d + 1]$ from $<_{1}$ to $<_{l}$ induces a (combinatorial) automorphism of the cyclic polytope $C(n + 2d + 1, 2d)$---see \cite{kw}. We think of this automorphism as re-orientating the cyclic polytope. Generalising \cite{ot}, given a $2d$-simplex $T$ of $C(n + 2d + 1,2d)$ and an ordering $<_{l}$ of $[n + 2d + 1]$ under which $T$ is ordered $T = (t_{0}, t_{1}, \dots , t_{2d})$, we write $e_{l}(T) = (t_{0}, t_{2}, \dots , t_{2d-2}, t_{2d})$. If the ordering is determined by the context, we simply write $e(T)$.

\begin{lemma}\label{lem:find_2d_simplex}
Given an ordering $<_{l}$ of $[n + 2d + 1]$ and a $d$-arc $A$ of a triangulation $\mathcal{T}$ of $C(n + 2d + 1,2d)$, there is a $2d$-simplex $T$ of $\mathcal{T}$ such that $e_{l}(T) = A$.
\end{lemma}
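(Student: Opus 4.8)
The plan is to argue geometrically, realising $C(n+2d+1,2d)$ on the moment curve and identifying the sought-after $2d$-simplex as the cell of $\mathcal{T}$ lying immediately ``below'' $A$ in a direction adapted to $A$.

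First I would reduce to the standard order. Passing from $<_{1}$ to $<_{l}$ is realised by a combinatorial automorphism of $C(n+2d+1,2d)$ (the re-orientation discussed just before the lemma, cf.\ \cite{kw}), and this automorphism carries $d$-arcs to $d$-arcs, $2d$-simplices of $\mathcal{T}$ to $2d$-simplices of the image triangulation, and satisfies $e_{1}(\phi_{l}(T))=\phi_{l}(e_{l}(T))$; so I may assume $l=1$ and write $e=e_{1}$. Next I record what the conclusion asks for: writing $A=(a_{0},\dots,a_{d})$, a $2d$-simplex $T$ satisfies $e(T)=A$ precisely when $T=(a_{0},b_{0},a_{1},b_{1},\dots,b_{d-1},a_{d})$ with $a_{i}<b_{i}<a_{i+1}$ for each $i$; that is, when the $d$ remaining vertices of $T$ lie one in each of the inner gaps $(a_{i},a_{i+1})$. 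Since $A\in\nonconsec{n+2d+1}{d}$, every inner gap contains a vertex, so such simplices exist combinatorially, and the task is to produce one lying in $\mathcal{T}$.

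To this end I would fix a point $x$ in the relative interior of $|A|$; as $A$ is internal, $x$ lies in the interior of the polytope. For each $i\in\{0,\dots,d-1\}$ choose a vertex $c_{i}\in(a_{i},a_{i+1})$, set $B=(c_{0},\dots,c_{d-1})$, and let $q$ be a generic point of $\operatorname{relint}|B|$. For small $\epsilon>0$ the point $x_{\epsilon}=(1-\epsilon)x+\epsilon q$ lies in the interior of a unique $2d$-simplex $T^{\ast}\in\mathcal{T}$; letting $\epsilon\to 0^{+}$ and using finiteness of $\mathcal{T}$ gives $x\in|T^{\ast}|$, and since the carrier of $x$ is $|A|$ this forces $A\subseteq T^{\ast}$. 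It then remains to show that the $d$ vertices of $T^{\ast}$ outside $A$ lie one in each inner gap, for then $e(T^{\ast})=A$ and we are done.

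The main obstacle is exactly this ``one vertex per inner gap'' statement, and here I would invoke the oriented-matroid structure of the moment curve. Projecting $\mathbb{R}^{2d}$ along the affine span of $|A|$ onto a $d$-dimensional space $V$, the simplex $|A|$ collapses to a point while the vertices of each gap map into a fixed open cone, and the alternating sign rule for Vandermonde determinants (equivalently Gale's evenness condition, underlying the combinatorial invariance cited from \cite{gale,breen}) arranges the $d$ inner-gap cones as a positively independent simplicial cone, with the outer gap opposite. By construction the perturbation direction $q-x$ lies in the interior of the cone spanned by the inner gaps, so any simplex of $\mathcal{T}$ that contains $x_{\epsilon}$ and has $|A|$ as a face must contribute one vertex into each inner-gap cone; since $T^{\ast}$ has precisely $d$ such vertices, they are distributed one per inner gap. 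Equivalently, this says that $|A|$ is the intersection of the upper facets of $T^{\ast}$, i.e.\ $e(T^{\ast})=A$. As a sanity check, when $d=1$ this recovers the familiar fact that the diagonal $A$ is the long edge of the unique triangle of $\mathcal{T}$ lying on the side of $A$ containing the vertices between $a_{0}$ and $a_{1}$.
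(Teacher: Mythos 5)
Your reduction to the ordering $<_{1}$ and your carrier argument (giving $A\subseteq T^{\ast}$) are both fine, but the last step --- the claim that any $2d$-simplex of $\mathcal{T}$ containing $x_{\epsilon}$ and having $|A|$ as a face must have its $d$ remaining vertices distributed one per inner gap --- is false for an arbitrary choice of the $c_{i}$ and of generic $q$, and this is exactly where the content of the lemma sits. The obstruction is that two different inner gaps can project to nearly antipodal cones, so the set of ``one vertex per inner gap'' directions is nearly a half-plane and overlaps the cone spanned by two vertices of a \emph{single} gap; the Vandermonde sign structure you invoke does not exclude this. Concretely, take $C(8,4)$ with $p_{t}=(t,t^{2},t^{3},t^{4})$, $A=(1,4,6)$, and the triangulation $\mathcal{T}$ whose $2$-arcs are $(1,3,6),(1,3,7),(1,4,6),(1,4,7),(2,4,6),(4,6,8)$ (six pairwise non-intertwining internal $2$-simplices, hence a triangulation); one checks that the $4$-simplices of $\mathcal{T}$ containing $A$ are $(1,2,3,4,6)$, $(1,2,4,5,6)$, $(1,3,4,6,7)$, $(1,4,6,7,8)$, $(1,4,5,6,8)$. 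Project along the affine span of $|A|$ via $\pi(x_{1},x_{2},x_{3},x_{4})=(34x_{1}-11x_{2}+x_{3},\,350x_{1}-87x_{2}+x_{4})$, which kills $p_{4}-p_{1}$ and $p_{6}-p_{1}$; the images of $p_{2},p_{3},p_{5}$ relative to the image of $|A|$ are $v_{2}=(8,104)$, $v_{3}=(6,84)$, $v_{5}=(-4,-64)$, with slopes $13$, $14$, $16$. For small $\beta>0$ the vector $v_{3}+\beta v_{5}$ stays in the first quadrant with slope strictly between $13$ and $14$, i.e.\ it lies in the interior of $\operatorname{cone}(v_{2},v_{3})$. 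So if you choose $c_{0}=3$, $c_{1}=5$, $B=(3,5)$ and $q\in\operatorname{relint}|B|$ close to $p_{3}$, then $x_{\epsilon}$ lies in $|(1,2,3,4,6)|$, whose two extra vertices $2,3$ both lie in the gap $(1,4)$; and indeed $e((1,2,3,4,6))=(1,3,6)\neq A$. The lemma is of course still true for this $\mathcal{T}$ --- the simplex you need is $(1,2,4,5,6)$ --- but your perturbation does not find it, so the proof as written fails.

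For comparison, the paper disposes of the lemma by citing \cite[Proposition 2.13]{ot} in the orientation given by $<_{l}$, and the proof there is your argument with one crucial difference: the perturbation direction is the fixed vertical direction $-e_{2d}$ (negative last coordinate), not a direction depending on a choice of vertices in the gaps. For that direction the sign computation genuinely closes the argument: $x-\epsilon e_{2d}\in|T^{\ast}|$ for small $\epsilon$ forces every facet of $T^{\ast}$ containing $A$ to be an \emph{upper} facet of $T^{\ast}$, and the Vandermonde/Gale computation shows the upper facets of $T^{\ast}=(t_{0},\dots,t_{2d})$ are precisely those omitting an odd-indexed vertex, whence $A\supseteq e(T^{\ast})$ and so $A=e(T^{\ast})$ by cardinality. (In the example above, the image of $-e_{4}$ is $(0,-1)$, which does lie in the cone of the correct simplex $(1,2,4,5,6)$ and in no bad cone.) To salvage your formulation you would have to prove that some \emph{specific} admissible choice of the $c_{i}$ and $q$ always works --- for instance one whose projected direction agrees with that of $-e_{2d}$ --- and that is precisely the step your write-up leaves unproved.
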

\begin{proof}
This follows from applying \cite[Proposition 2.13]{ot} in the orientation given by $<_{l}$.
\end{proof}

\begin{proposition}\label{prop:all_but_one_vertex}
Suppose that $A \rightarrow B$ is an arrow in $Q(\mathcal{T})$. Then $A$ and $B$ share all but one entry, and there is a $2d$-simplex $T$ of $\mathcal{T}$ such that $A$ and $B$ are both faces of $T$.
\end{proposition}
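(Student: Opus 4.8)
The plan is to read off both $A$ and $B$ as faces of a single top-dimensional simplex of $\mathcal{T}$ produced by Lemma~\ref{lem:find_2d_simplex}, and to use the minimality clause in Definition~\ref{def:quiver} to pin down how far $B$ can sit from $A$. Since whether $A \swr B$ holds is independent of the chosen cyclic order, I would first fix a cyclically shifted order $<_{l}$ in which the entries of $A$ read $a_{0} <_{l} \cdots <_{l} a_{d}$ with no wrap-around, so that the shifted-intertwining condition $(A - \onevec) \swr B$ unwinds into honest inequalities $a_{i} - 1 < b_{i} < a_{i+1} - 1$, i.e. $a_{i} \leqslant b_{i} \leqslant a_{i+1} - 2$, for each $i$. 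In particular $b_{i} \geqslant a_{i}$ for every $i$, so $B$ lies weakly above $A$ coordinatewise, while the upper bound forbids any $b_{i}$ from reaching $a_{i+1}$. (Note that the arc inequalities $a_{i+1} \geqslant a_{i} + 2$ show $(A - \onevec) \swr A$ always holds, which is exactly why the clause $A \neq B$ is needed.) Applying Lemma~\ref{lem:find_2d_simplex} in the order $<_{l}$ now gives a $2d$-simplex $T = (t_{0}, t_{1}, \dots, t_{2d})$ of $\mathcal{T}$ with $e_{l}(T) = A$, so that $a_{i} = t_{2i}$, each odd vertex satisfies $a_{i} < t_{2i+1} < a_{i+1}$, and $A$ is automatically a face of $T$.

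The substance of the argument is then to show that $B$ agrees with $A$ in all but one coordinate and that its distinct vertex is one of the odd vertices of $T$; both conclusions of the proposition follow at once. I would argue this through the minimality clause, with the single-swap faces of $T$ serving as the intermediate $d$-arcs. Concretely, for each $j$ the simplex obtained from $A$ by replacing $a_{j} = t_{2j}$ with the odd vertex $t_{2j+1}$ lies strictly between $A$ and any $B$ with $b_{j} > a_{j}$: one checks directly from the displayed inequalities that it is $\prec$-above $A$ (this is where one needs $t_{2j+1} \leqslant a_{j+1} - 2$, which is precisely the condition for it to satisfy the arc gap), and that it is $\prec$-below $B$ whenever $B$ differs from $A$ in a second coordinate or has $b_{j} > t_{2j+1}$. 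Since such an intermediate element would contradict minimality, $B$ is forced to differ from $A$ in exactly one coordinate $j$, with $a_{j} < b_{j} < a_{j+1}$, and moreover $b_{j} = t_{2j+1}$, the unique vertex of $T$ in that gap. Hence $B$ is a face of $T$ sharing all its entries with $A$ except $a_{j}$.

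The main obstacle — and the point requiring genuine input beyond inequality-chasing — is that the minimality clause only excludes intermediate elements that are actual $d$-arcs of $\mathcal{T}$, i.e. internal $d$-simplices belonging to the triangulation, whereas the single-swap faces of $T$ used above are a priori only $d$-simplices. Establishing that the relevant faces of $T$ are genuinely in $\darcs{\mathcal{T}}$ is where the geometry of the triangulation must be used, and I expect to obtain this from the fact that these are internal faces of a simplex of $\mathcal{T}$ together with the non-intertwining characterisation of $\darcs{\mathcal{T}}$ from \cite{ot} and the fact that a triangulation of $C(n+2d+1,2d)$ is determined by its $d$-arcs (Dey's theorem \cite{dey}). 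A secondary technical nuisance to watch is the cyclic arithmetic in the top coordinate $b_{d}$, which wraps around modulo $n + 2d + 1$; the choice of $<_{l}$ in the first step is made precisely so that the differing coordinate avoids this wrap and the inequality manipulations stay honest.
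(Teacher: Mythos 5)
Your proposal has a genuine gap, and it sits precisely where you anchor the $2d$-simplex: you take $T$ with $e_l(T) = A$ (the \emph{tail} of the arrow) and try to force $B$ to be a single-swap face of it. That step fails, because a simplex $T$ with $e_l(T) = A$ has no vertex in the cyclic gap between $a_d$ and $a_0$, and nothing prevents the differing entry of $B$ from lying exactly there; in that case $B$ is not a face of your $T$, the claimed identity $b_j = t_{2j+1}$ is false, and the minimality clause has no candidate simplices to act on. A concrete counterexample with $d = 1$: in the hexagon triangulation with arcs $13$, $14$, $46$ (the right-hand picture in Figure~\ref{fig:int_triang}), $13 \to 14$ is an arrow of the quiver. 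With $A = (1,3)$, $B = (1,4)$, every order $<_l$ admissible for your setup makes $T = (1,2,3)$ the unique triangle with $e_l(T) = A$; it does not contain $B$, since $b_1 = 4$ lies in the wrap gap between $3$ and $1$ (i.e.\ among $\{4,5,6\}$), which contains no vertex of $T$, and the only single-swap face of $T$, namely $(2,3)$, is not even internal. The two technical worries you flag (that one needs $t_{2j+1} \leqslant a_{j+1} - 2$, and that the intermediate faces must be genuine $d$-arcs of $\mathcal{T}$) are real, but they are symptoms of this same wrong anchoring, and they cannot be repaired by Dey's theorem or the non-intertwining characterisation: the simplices your argument needs simply need not exist in $\mathcal{T}$.

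The paper's proof runs in the opposite direction. It first notes that $A$ and $B$ must share an entry, since otherwise $(A - \onevec) \swr B$ with all $b_i \neq a_i$ forces $A \swr B$, impossible for two $d$-arcs of one triangulation; it then reorders so that $a_0 = b_0$ and applies Lemma~\ref{lem:find_2d_simplex} to get $T$ with $e(T) = B$, the \emph{head} of the arrow. With this anchoring every entry $a_i$ lies in the half-open gap $(b_{i-1}, b_i]$ of $B$, and each such gap contains an odd vertex $x_i$ of $T$, so the minimality argument can always run: either $x_i \leqslant a_i - 1$ for all $i$, in which case the face $(x_1, \dots, x_d, b_d)$ of $T$ intertwines $A$ (impossible), or some $x_i > a_i - 1$, and then the single-swap face $(b_0, \dots, b_{i-1}, x_i, b_{i+1}, \dots, b_d)$ is \emph{automatically} internal (the chain $b_{i-1} < a_i - 1 < x_i < b_i < b_{i+1}$ yields the gap conditions for free, in contrast to your setup) and is squeezed between $A$ and $B$ in the sense of Definition~\ref{def:quiver}, so minimality forces it to equal $A$. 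In the hexagon example this produces $T = (1,3,4)$, which indeed contains both arcs. So anchoring at the head rather than the tail is not a cosmetic choice: it is what guarantees both the existence and the internality of the intermediate candidates, and your argument does not go through without it.
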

\begin{proof}
We have that $(A - \onevec) \swr B$. It must be the case that $A$ and $B$ have at least one common entry, otherwise $A \swr B$. Hence, by re-ordering, we may assume that $a_{0} = b_{0}$ and $a_{d - 1} < a_{d} < b_{d}$. There must be a $2d$-simplex $T$ of $\mathcal{T}$ such that $e(T) = B$. We label the vertices of $T$ by $T = (b_{0}, x_{1}, b_{1}, x_{2}, \dots, b_{d - 1}, x_{d}, b_{d})$.

\sloppy We cannot have that $b_{i - 1} < x_{i} \leq a_{i} - 1$ for all $i \in [d]$, otherwise $A \swr (x_{1}, x_{2}, \dots, x_{d}, b_{d})$, which is impossible since $A$ and $(x_{1}, x_{2}, \dots, x_{d}, b_{d})$ are both $d$-arcs of $\mathcal{T}$. Hence there is an $i \in [d]$ such that $b_{i - 1} < a_{i} - 1 < x_{i}$. Then $(b_{0}, b_{1}, \dots, b_{i - 1}, x_{i}, b_{i + 1}, b_{i + 2}, \dots, b_{d})$ is a $d$-arc of $\mathcal{T}$, since $b_{i - 1} < a_{i} - 1 < x_{i}$ and $x_{i} < b_{i} < b_{i + 1}$. Moreover, $(A - \onevec) \swr (b_{0}, b_{1}, \dots, b_{i - 1}, x_{i}, b_{i + 1}, b_{i + 2}, \dots, b_{d})$ and $(b_{0} - 1, \dots, b_{i - 1} - 1, x_{i} - 1, b_{i + 1} - 1, \dots, b_{d} - 1) \swr B$. Since $A \rightarrow B$ is an arrow in $Q(\mathcal{T})$, we must have $A = (b_{0}, b_{1}, \dots, b_{i - 1}, x_{i}, b_{i + 1}, b_{i + 2}, \dots, b_{d})$. (In fact, by the ordering we have chosen, we must have that $i = d$.) Therefore $A$ and $B$ are both faces of the $2d$-simplex $T$ and they share all but one entry, as desired.
\end{proof}

\begin{corollary}\label{cor:quiv_desc}
The arrows of $Q(\mathcal{T})$ are \[Q_{1}(\mathcal{T}) = \set{A \rightarrow A + \rvec \st \parbox{6.5cm}{\begin{center}$A, A + \rvec \in \nonconsec{n + 2d + 1}{d},$ $\nexists s \in [r - 1] \text{ such that } A + \svec \in Q_{0}(\mathcal{T})$\end{center}}}.\]
\end{corollary}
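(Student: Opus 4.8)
The plan is to upgrade the abstract condition of Definition~\ref{def:quiver} to the explicit coordinate description, taking Proposition~\ref{prop:all_but_one_vertex} as the starting point. That proposition already tells us that the endpoints $A, B$ of any arrow $A \rightarrow B$ are $d$-arcs sharing all but one entry, hence both lie in $\nonconsec{n + 2d + 1}{d}$. It remains to (i) identify which entry changes and in which direction, (ii) rephrase the condition $(A - \onevec) \swr B$ purely in terms of the defining inequalities of $\nonconsec{n + 2d + 1}{d}$, and (iii) translate the non-existence of an intermediate $d$-arc $A'$ into the non-existence of an intermediate lattice point $A + \svec \in \darcs{\mathcal{T}}$.

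For (i) and (ii) I would work in the ordering $<_{l}$ furnished by Proposition~\ref{prop:all_but_one_vertex}, in which $A$ and $B$ agree in all coordinates except one, say the $i$-th. Reading the cyclic chain $a_{0} - 1 <_{l} b_{0} <_{l} a_{1} - 1 <_{l} \cdots <_{l} a_{d} - 1 <_{l} b_{d}$ expressing $(A - \onevec) \swr B$ coordinatewise, the relation $a_{i} - 1 <_{l} b_{i}$ together with $b_{i} \neq a_{i}$ forces $b_{i} > a_{i}$, so $B = A + \rvec$ with $r \geqslant 1$. I would then observe that the remaining relations in the chain are, one for one, exactly the inequalities defining membership of $A$ and of $A + \rvec$ in $\nonconsec{n + 2d + 1}{d}$: each relation $b_{j} <_{l} a_{j + 1} - 1$ with $j \neq i$ reduces to $a_{j + 1} \geqslant a_{j} + 2$, a non-consecutiveness inequality for $A$, while the relation at $j = i$ reads $a_{i} + r <_{l} a_{i + 1} - 1$, i.e.\ $a_{i + 1} \geqslant b_{i} + 2$, the corresponding inequality for $A + \rvec$; the wrap-around relation similarly reduces to the boundary conditions $a_{d} + 2 \leqslant a_{0} + n + 2d + 1$ and $b_{d} + 2 \leqslant b_{0} + n + 2d + 1$. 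Hence $(A - \onevec) \swr (A + \rvec)$ holds automatically once $A, A + \rvec \in \nonconsec{n + 2d + 1}{d}$, and conversely every arrow has this form.

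The heart of the proof is (iii). Suppose $A' \in \darcs{\mathcal{T}} \setminus \{A, B\}$ satisfies $(A - \onevec) \swr A'$ and $(A' - \onevec) \swr B$. Comparing the two intertwining chains coordinatewise yields, for every $j$, the squeeze $a_{j} \leqslant a'_{j}$ (from the first chain, via $a_{j} - 1 <_{l} a'_{j}$) and $a'_{j} \leqslant b_{j}$ (from the second, via $a'_{j} - 1 <_{l} b_{j}$). Since $a_{j} = b_{j}$ for all $j \neq i$, this forces $a'_{j} = a_{j}$ off the $i$-th coordinate, while at $j = i$ it gives $a_{i} \leqslant a'_{i} \leqslant a_{i} + r$; as $A' \neq A, B$ excludes the two endpoints, we get $a'_{i} = a_{i} + s$ with $s \in [r - 1]$, that is $A' = A + \svec$. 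Therefore an intermediate $d$-arc exists if and only if $A + \svec \in \darcs{\mathcal{T}}$ for some $s \in [r - 1]$, which is precisely the condition excluded in the statement. Combining (iii) with (i) and (ii) gives both inclusions.

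I expect the principal obstacle to be the bookkeeping imposed by the cyclic orderings and the arithmetic modulo $n + 2d + 1$. The coordinatewise comparisons above are transparent for indices away from the junction between $a_{d}$ and $a_{0}$, but need care at the wrap-around, and in the case $i = d$ one must invoke the boundary inequality rather than an ordinary non-consecutiveness inequality. I would neutralise this by using Proposition~\ref{prop:all_but_one_vertex} to fix an ordering $<_{l}$ placing the changing coordinate away from the wrap-around, so that every comparison becomes the linear case; the residual checks are then the routine inequality verifications indicated above.
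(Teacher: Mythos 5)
Your steps (i) and (ii) are sound: there the alignment of the interleaving chain is forced, because $A$ and $B$ share $d$ entries, and each shared entry $b_{j} = a_{j}$ can only lie in the cyclic gap between $a_{j} - 1$ and $a_{j+1} - 1$ of $A - \onevec$, which pins $b_{i}$ into the remaining gap. The genuine gap is in step (iii), at the sentence claiming the squeeze $a'_{j} \leqslant b_{j}$ ``from the second chain, via $a'_{j} - 1 <_{l} b_{j}$''. This presupposes that in the chain expressing $(A' - \onevec) \swr B$ the entry $a'_{j} - 1$ is immediately followed by $b_{j}$; but $A'$ need not share any entry with $A$ or $B$, so nothing forces this alignment, and the alternation can be cyclically shifted so that $a'_{j} - 1$ is instead followed by $b_{j+1}$. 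Concretely, take $d = 1$, $n + 2d + 1 = 13$, $A = (1, 10)$, $B = (3, 10) = A + 2E_{0}$, and $A' = (5, 12)$. All three lie in $\nonconsec{13}{1}$, and one checks directly that $(A - \onevec) \swr A'$ (the entries $9, 13$ of $A - \onevec$ alternate with $5, 12$) and $(A' - \onevec) \swr B$ (the entries $4, 11$ of $A' - \onevec$ alternate with $3, 10$), yet $A'$ is not of the form $A + \svec$. So the implication you assert---that the two chains alone force $A' = A + \svec$---is false.

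What rescues the corollary, and what your argument never invokes, is the hypothesis that $A' \in \darcs{\mathcal{T}}$, i.e.\ that $A'$ belongs to a non-intertwining collection containing $A$. A case analysis of the possible alignments shows that any $A'$ satisfying both chains either has the form $A + \svec$ with $s \in [r-1]$ (your intended case), or satisfies $a_{j} < a'_{j} < a_{j+1}$ cyclically for all $j$, which means $A \swr A'$---exactly what happens in the example above, where $(1,10) \swr (5,12)$. The second alternative is impossible for a $d$-arc of the same triangulation precisely because $\darcs{\mathcal{T}}$ is non-intertwining, and only after excluding it does your coordinatewise conclusion hold. This is also where your route diverges from the paper's: the paper deduces the corollary from Proposition~\ref{prop:all_but_one_vertex}, whose proof is where the triangulation structure (via $2d$-simplices of $\mathcal{T}$ and the non-intertwining property) does the work of pinning down the arrows. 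Your proof can be repaired by inserting the non-intertwining step, but as written the comparison of chains is not valid.
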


We call arrows $A \to A + \rvec$ \emph{arrows of type $i$}. Note that this depends on the choice of an ordering $<_{l}$.

For an arrow $\alpha$, we denote the \emph{head} $h(\alpha)$ and the \emph{tail} $t(\alpha)$ such that $t(\alpha) \xrightarrow{\alpha} h(\alpha)$. We say that $\alpha$ is \emph{incident} at $t(\alpha)$ and $h(\alpha)$. Given a quiver $Q$ with vertices $A,B \in Q_{0}$, by a \emph{path} in $Q$ from $A$ to $B$ we mean a finite sequence of arrows $\alpha_{1}\dots\alpha_{r}$ such that $t(\alpha_{1}) = A, h(\alpha_{r}) = B$ and $h(\alpha_{i - 1}) = t(\alpha_{i})$ for all $i \in \{2, 3, \dots, s\}$. If there is a path from $A$ to $B$, then we write $A \leadsto B$. We note the following property concerning paths in $Q(\mathcal{T})$ which will be useful later.

\begin{lemma}\label{lem:find_path}
Given a triangulation $\mathcal{T}$ of $C(n + 2d + 1, 2d)$ and $A, B \in Q_{0}(\mathcal{T})$ with $(A - \onevec) \swr B$, there is a path from $A$ to $B$ in $Q(\mathcal{T})$.
\end{lemma}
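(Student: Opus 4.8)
The plan is to induct on the quantity $m(A,B) := \sum_{j=0}^{d}(b_{j}-a_{j})$, defined after fixing a cyclically shifted order $<_{l}$ that realises the intertwining, so that $a_{0}-1 <_{l} b_{0} <_{l} a_{1}-1 <_{l} \dots <_{l} a_{d}-1 <_{l} b_{d}$; lifting this chain to a strictly increasing sequence of integers makes each summand $b_{j}-a_{j}$ a well-defined non-negative integer, since $a_{j}-1 <_{l} b_{j}$ forces $b_{j}\geqslant a_{j}$ while $b_{j} <_{l} a_{j+1}-1$ keeps each gap genuinely small. Thus $m(A,B)\geqslant 0$, with $m(A,B)=0$ exactly when $A=B$, and the base case $m(A,B)=0$ is the empty path.

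For the inductive step I assume $A\neq B$. As $\mathcal{T}$ is non-intertwining we have $A\not\swr B$, so $A$ and $B$ share a vertex and we may take $l$ to be this shared vertex, giving $a_{0}=b_{0}$. I would then run the argument already contained in the proof of Proposition~\ref{prop:all_but_one_vertex}: choose, via Lemma~\ref{lem:find_2d_simplex}, a $2d$-simplex $T$ of $\mathcal{T}$ with $e(T)=B$, write $T=(b_{0},x_{1},b_{1},\dots,x_{d},b_{d})$, and extract an index $i\in[d]$ with $b_{i-1}<a_{i}-1<x_{i}<b_{i}$. Setting $C:=(b_{0},\dots,b_{i-1},x_{i},b_{i+1},\dots,b_{d})$, which is a face of $T$ and hence a $d$-arc of $\mathcal{T}$, one checks directly that $(A-\onevec)\swr C$ in the same order $<_{l}$ and that $C$ differs from $B$ only in coordinate $i$, with $x_{i}<b_{i}$. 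Because $m(A,C)=m(A,B)-(b_{i}-x_{i})<m(A,B)$, induction yields a path $A\leadsto C$. For the remaining stretch $C\leadsto B$ I would climb in coordinate $i$: the $d$-arcs of $\mathcal{T}$ of the form $C+sE_{i}$ with $0\leqslant s\leqslant b_{i}-x_{i}$ contain both $C$ and $B$, and listing them in increasing order of $s$ produces a chain in which each consecutive pair is an arrow of type $i$ by Corollary~\ref{cor:quiv_desc}. Concatenating $A\leadsto C$ with $C\leadsto B$ completes the step.

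The main obstacle is the production of the intermediate $d$-arc $C$ that sits strictly below $B$ in the measure while remaining above $A$; this is exactly where the geometry of the triangulation enters, through the $2d$-simplex $T$ supplied by Lemma~\ref{lem:find_2d_simplex} and the non-intertwining property used to guarantee a shared vertex. The delicate points are bookkeeping ones: verifying that a single cyclically shifted order $<_{l}$ simultaneously realises $(A-\onevec)\swr B$, exhibits the shared vertex as $a_{0}=b_{0}$, and continues to realise $(A-\onevec)\swr C$ after the substitution, so that the measure genuinely drops. I would stress that organising the induction around a one-coordinate reduction---first descending to $C$, then climbing to $B$ within a single coordinate---lets me sidestep proving directly that the relation $(\,\cdot\,-\onevec)\swr(\,\cdot\,)$ is transitive, which is the step one might naively expect to need.
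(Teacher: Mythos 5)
There is a genuine gap at the step where you ``extract an index $i\in[d]$''. Taking $l$ to be an \emph{arbitrary} shared vertex only guarantees $a_{0}=b_{0}$; the argument of Proposition~\ref{prop:all_but_one_vertex} that you invoke needs more: the re-ordering chosen there satisfies $a_{0}=b_{0}$ \emph{and} $a_{d}<b_{d}$, and the strict inequality $a_{d}<b_{d}$ is exactly what makes the contradiction ``otherwise $A\swr(x_{1},\dots,x_{d},b_{d})$'' run. If $A$ and $B$ also share their last entry, then $A$ and $(x_{1},\dots,x_{d},b_{d})$ share the vertex $a_{d}=b_{d}$, no contradiction arises, and no admissible index $i$ need exist. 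A concrete failure: in $C(8,4)$ take the triangulation $\mathcal{T}$ with $\darcs{\mathcal{T}}=\{(1,3,7),(1,4,7),(1,5,7),(2,4,7),(2,5,7),(3,5,7)\}$ (all contain $7$, hence are pairwise non-intertwining, and there are $\binom{4}{2}=6$ of them). Take $A=(1,4,7)$ and $B=(1,5,7)$, so that $(A-\onevec)\swr B$ and the shared entries are $1$ and $7$. If you place the shared vertex $1$ in position $0$, the unique $2d$-simplex $T$ of $\mathcal{T}$ with $e_{1}(T)=B$ is $(1,2,5,6,7)$: the other candidates $(1,3,5,6,7)$ and $(1,4,5,6,7)$ have internal faces $(1,3,5)$, respectively $(1,4,6)$, which are not arcs of $\mathcal{T}$. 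So $x_{1}=2$ and $x_{2}=6$, and neither $i=1$ (which needs $3<2$) nor $i=2$ (which needs $6<6$) satisfies $a_{i}-1<x_{i}$; your construction of $C$ never gets started, even though the conclusion is true here ($A\to B$ is itself an arrow). The repair is the choice the paper makes tacitly in Proposition~\ref{prop:all_but_one_vertex}: since $A\neq B$, pick a shared entry whose cyclic predecessor entries differ, so that after re-ordering $a_{0}=b_{0}$ and $a_{d}<b_{d}$; in the example you must start the ordering at $7$, not at $1$. With that choice the remainder of your argument (the measure, the descent to $C$, and the climb via Corollary~\ref{cor:quiv_desc}) is sound.

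Separately, the reason you give for taking this long geometric route rests on a misreading of what the direct induction requires; the paper's proof is precisely that direct induction, and it needs no transitivity. If $A\to B$ is not an arrow and $A\neq B$, then Definition~\ref{def:quiver} itself hands you $A'\in\darcs{\mathcal{T}}\setminus\{A,B\}$ with $(A-\onevec)\swr A'$ and $(A'-\onevec)\swr B$, and these two relations are exactly the hypotheses needed to apply the induction to the pairs $(A,A')$ and $(A',B)$ --- nothing has to be composed. They moreover force $a_{j}\leqslant a'_{j}\leqslant b_{j}$ in every coordinate (non-intertwining of arcs of $\mathcal{T}$ rules out any misalignment of the interleaving), so your own measure $m$ drops strictly on both sides because $A'\neq A$ and $A'\neq B$; concatenating the two paths finishes the proof without ever invoking Lemma~\ref{lem:find_2d_simplex}, a $2d$-simplex, or the one-coordinate climb.
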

\begin{proof}
This is clear from Definition~\ref{def:quiver}, using induction.
\end{proof}

The following property will be useful in Section~\ref{sect:mut}.

\begin{lemma}\label{lem:space}
Let $\mathcal{T}$ be a triangulation of $C(n + 2d + 1, 2d)$ with $A \in \darcs{\mathcal{T}}$. If $a_{i} + 2 < a_{i + 1}$ for some $i$, then there is either an arrow \[(a_{0}, \dots, a_{i}, a_{i + 1}^{-}, a_{i + 2}, \dots, a_{d}) \to A\] or an arrow \[A \to (a_{0}, \dots, a_{i - 1}, a_{i}^{+}, a_{i + 1}, \dots, a_{d}).\]
\end{lemma}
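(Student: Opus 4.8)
The plan is to produce a single $d$-arc of $\mathcal{T}$ that differs from $A$ only by increasing the entry $a_i$ or only by decreasing the entry $a_{i+1}$; the required arrow then follows immediately from Corollary~\ref{cor:quiv_desc}. The crucial point is that I need not search through all of $\darcs{\mathcal{T}}$: it suffices to exhibit a face of a single $2d$-simplex of $\mathcal{T}$ which happens to be a valid arc. Indeed, every $d$-face $F$ of a $2d$-simplex $T$ of $\mathcal{T}$ is a $d$-simplex of $\mathcal{T}$, and since $\darcs{\mathcal{T}}$ consists precisely of the internal $d$-simplices of $\mathcal{T}$ while membership in $\nonconsec{n + 2d + 1}{d}$ is exactly internality, any such face lying in $\nonconsec{n + 2d + 1}{d}$ automatically belongs to $\darcs{\mathcal{T}}$.

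\textbf{Choosing the simplex and splitting into cases.} First I would apply Lemma~\ref{lem:find_2d_simplex} to fix a $2d$-simplex $T = (t_0, t_1, \dots, t_{2d})$ of $\mathcal{T}$ with $e(T) = A$, so that $t_{2j} = a_j$ for every $j$ and, as the $t_k$ are strictly increasing, each odd vertex satisfies $t_{2j - 1} \in (a_{j - 1}, a_{j})$. Writing $w := t_{2i + 1}$ for the vertex of $T$ between $a_i$ and $a_{i + 1}$, we have $a_i < w < a_{i + 1}$. Because the hypothesis $a_i + 2 < a_{i + 1}$ forces $a_{i + 1} - a_i \geqslant 3$, the equalities $w = a_i + 1$ and $w = a_{i + 1} - 1$ cannot hold simultaneously; hence at least one of $w \leqslant a_{i + 1} - 2$ or $w \geqslant a_i + 2$ must hold, and this is the case split.

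\textbf{Extracting the arrow.} Suppose $w \leqslant a_{i + 1} - 2$ (the case $w \geqslant a_i + 2$ being symmetric under exchanging $i$ and $i + 1$). I would consider $A^{\ast} := (a_0, \dots, a_{i - 1}, w, a_{i + 1}, \dots, a_d)$, the $d$-face of $T$ obtained from $A$ by replacing $a_i$ with $w$. All the defining inequalities of $\nonconsec{n + 2d + 1}{d}$ are inherited from $A$ except the two gaps adjacent to position $i$, which are controlled by $a_i < w \leqslant a_{i + 1} - 2$; a short check gives $A^{\ast} \in \nonconsec{n + 2d + 1}{d}$, so by the observation above $A^{\ast} \in \darcs{\mathcal{T}}$. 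Since $A^{\ast} = A + (w - a_i)E_i$ with $w - a_i \geqslant 1$, there is a $d$-arc of $\mathcal{T}$ of the form $A + sE_i$ with $s \geqslant 1$; taking $s$ minimal, Corollary~\ref{cor:quiv_desc} delivers an arrow $A \to (a_0, \dots, a_{i - 1}, a_i^{+}, a_{i + 1}, \dots, a_d)$ of type $i$. In the symmetric case, the face replacing $a_{i + 1}$ by $w$ is a $d$-arc of the form $A - s E_{i + 1}$ with $s \geqslant 1$, and the same reasoning produces the arrow $(a_0, \dots, a_i, a_{i + 1}^{-}, a_{i + 2}, \dots, a_d) \to A$.

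\textbf{Where the work sits.} The one genuinely load-bearing step is the opening observation that a valid arc occurring as a face of $T \in \mathcal{T}$ must lie in $\darcs{\mathcal{T}}$; this is exactly where the identification of $d$-arcs of $\mathcal{T}$ with internal $d$-simplices, and of internality with membership in $\nonconsec{n + 2d + 1}{d}$, is used. Everything else is the exhaustive dichotomy on the position of $w$ and the routine verification of the gap inequalities (with the wrap-around gaps, relevant when $i = 0$ or $i = d - 1$, handled in the same way).
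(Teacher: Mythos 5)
Your proposal is correct and takes essentially the same approach as the paper: both apply Lemma~\ref{lem:find_2d_simplex} to obtain a $2d$-simplex $T$ with $e(T) = A$, perform the same dichotomy on the position of the vertex $w$ of $T$ lying between $a_{i}$ and $a_{i+1}$ (possible since $a_{i+1} - a_{i} \geqslant 3$), and use the corresponding $d$-face of $T$ as the needed $d$-arc. The only cosmetic difference is that you conclude via Corollary~\ref{cor:quiv_desc} by taking $s$ minimal, whereas the paper phrases the last step as a path of arrows of a single type whose first (respectively last) arrow is the required one.
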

\begin{proof}
We use the fact that there is a $2d$-simplex $T$ of $\mathcal{T}$ such that $e(T) = A$. If we let $T = (a_{0}, x_{1}, a_{1}, x_{2}, \dots, a_{d - 1}, x_{d}, a_{d})$, then we must have either $x_{i + 1} > a_{i} + 1$ and $x_{i + 1} < a_{i + 1} - 1$. In the former case, we must have a path $(a_{0}, a_{1}, \dots, a_{i}, x_{i + 1}, a_{i + 2}, a_{i + 3}, \dots, a_{d}) \leadsto A$ comprised of arrows of type $i + 1$; in the latter case, we must have a path $A \leadsto (a_{0}, a_{1}, \dots, a_{i - 1}, x_{i}, a_{i + 1}, a_{i + 2} \dots, a_{d})$ of type $i$. This establishes the claim.
\end{proof}

\subsection{Quiver description}\label{sect:main}

With these preliminaries taken care of, we now move to prove the first main result of this paper, which describes triangulations $\mathcal{T}$ without interior $(d + 1)$-simplices in terms of their quivers $Q(\mathcal{T})$. Recall that a $(d + 1)$-simplex of a triangulation $\mathcal{T}$ of $C(n + 2d + 1, 2d)$ is interior if all of its facets are $d$-arcs.

Slices of $\tqdn$ give triangulations of $C(n + 2d + 1, 2d)$. We give a direct combinatorial proof of this, although it can also be deduced from the fact that slices correspond to iterated d-APR tilts \cite[Theorem 4.15]{io}, which implies that projecting to the cluster category will give a triangulation \cite[Theorem 6.4]{ot}. 

\begin{proposition}\label{prop:slice->triang}
If $S$ is a slice of $\tqdn$, then the vertices $\pi(S_{0})$ give a triangulation of $C(n + 2d + 1, 2d)$.
\end{proposition}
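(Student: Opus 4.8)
The plan is to use the bijection of \cite[Theorem 2.3, Theorem 2.4]{ot} between triangulations of $C(n+2d+1,2d)$ and non-intertwining collections of $\binom{n+d-1}{d}$ elements of $\nonconsec{n + 2d + 1}{d}$. Since $\pi$ already maps $\tqdn_{0}$ into $\nonconsec{n + 2d + 1}{d}$, it suffices to establish three things: that $\pi(S_{0})$ has the right cardinality, that $\pi|_{S_{0}}$ is injective (so that $\pi(S_{0})$ genuinely has $\binom{n+d-1}{d}$ elements), and that $\pi(S_{0})$ is non-intertwining. The cardinality is immediate: a slice meets each $\nu_{d}$-orbit exactly once, the orbits are in bijection with $\qdn_{0}$, and $|\qdn_{0}| = \binom{n+d-1}{d}$. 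So the content lies in the last two claims, both of which I would deduce from a single observation about slices.

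That observation is the following: \emph{for any $A, B \in S_{0}$ we never have $A + \onevec \leqslant B$.} Indeed, if $A + \onevec \leqslant B$ then $A \leqslant A + \onevec \leqslant B$, so by the characterisation of paths in $\tqdn$ there is a path $A \leadsto (A + \onevec) \leadsto B$ joining the two vertices $A, B \in S_{0}$ and passing through $A + \onevec$. Convexity of $S$ forces $A + \onevec \in S_{0}$; but $\nu_{d}(A + \onevec) = A$, so $A$ and $A + \onevec$ lie in the same $\nu_{d}$-orbit, contradicting the requirement that $S$ meet each orbit exactly once. Everything then reduces to producing, from a hypothetical collision or intertwining, a pair in $S_{0}$ of this forbidden form.

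To do this I would describe the fibres of $\pi$ explicitly. For a $d$-arc $P \in \nonconsec{n + 2d + 1}{d}$, let $u = (u_{j})_{j \in \mathbb{Z}}$ be the increasing bi-infinite sequence of all integers whose residue modulo $n + 2d + 1$ is an entry of $P$; it satisfies $u_{j + d + 1} = u_{j} + (n + 2d + 1)$, and the gap and width conditions defining $\derset{n + 2d + 1}{d}$ show that $\pi^{-1}(P)$ is exactly the set of windows $(u_{k}, u_{k+1}, \dots, u_{k+d})$ of $d+1$ consecutive terms. Now take $A, B \in S_{0}$. If $\pi(A) = \pi(B)$, then $A$ and $B$ are two such windows of the same sequence $u$, starting at indices $k_{0} \neq l_{0}$; assuming $k_{0} < l_{0}$, strict monotonicity of $u$ gives $u_{l_{0} + i} \geqslant u_{k_{0} + i} + 1$ for every $i$, that is $A + \onevec \leqslant B$, the forbidden configuration; hence $\pi|_{S_{0}}$ is injective. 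For non-intertwining, suppose $\pi(A) \swr \pi(B)$. Then $\pi(A)$ and $\pi(B)$ share no entry and their residues alternate around the cycle, so the sequences $u$ (for $\pi(A)$) and $w$ (for $\pi(B)$) interleave: after reindexing $w$ one has $u_{j} < w_{j} < u_{j+1}$ for all $j$. Writing $A = (u_{k_{0}}, \dots, u_{k_{0} + d})$ and $B = (w_{l_{0}}, \dots, w_{l_{0} + d})$, I would split into the cases $l_{0} \geqslant k_{0}$ and $l_{0} < k_{0}$: in the first, $w_{l_{0} + i} \geqslant w_{k_{0} + i} > u_{k_{0} + i}$ yields $A + \onevec \leqslant B$; in the second, $w_{l_{0} + i} < u_{l_{0} + i + 1} \leqslant u_{k_{0} + i}$ yields $B + \onevec \leqslant A$. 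Either way we reach the forbidden configuration, so $\pi(S_{0})$ is non-intertwining.

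Combining the three claims, $\pi(S_{0})$ is a non-intertwining collection of $\binom{n+d-1}{d}$ $d$-arcs, hence a triangulation. The main obstacle I anticipate is the bookkeeping in the third paragraph: one must check carefully that the preimages under $\pi$ are precisely the windows of consecutive terms (using the gap and width conditions defining $\derset{n + 2d + 1}{d}$), and that the cyclic intertwining relation $\pi(A) \swr \pi(B)$ really does translate into the strict interleaving $u_{j} < w_{j} < u_{j+1}$ with the index alignment used above. Once these two translations are pinned down, the comparisons against $A + \onevec$ and $B + \onevec$ are routine, and the convexity/one-orbit observation finishes the argument.
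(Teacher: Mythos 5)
Your proof is correct and follows essentially the same route as the paper: both arguments hinge on the key observation that $A + \onevec \leqslant B$ is impossible for $A, B \in S_{0}$ (convexity would force $A + \onevec \in S_{0}$, violating the one-vertex-per-$\nu_{d}$-orbit condition), and both reduce a hypothetical intertwining pair in $\pi(S_{0})$ to exactly this configuration. The differences are only in bookkeeping: the paper deduces $A + \onevec \leqslant B$ from $\pi(A) \swr \pi(B)$ by a direct minimal-index comparison using the width bound in $\derset{n+2d+1}{d}$, whereas you use the lift-sequence/window description of the fibres of $\pi$, which also lets you verify injectivity of $\pi|_{S_{0}}$ explicitly---a point the paper's proof leaves implicit in its cardinality claim.
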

\begin{proof}
There are as many $\nu_{d}$-orbits as there are elements of $\nonconsec{n + 2d + 1}{d}$ containing $1$, namely $\binom{n + d - 1}{d}$. Suppose that there exist $\pi(A)$ and $\pi(B)$ in $\pi(S_{0})$ with $\pi(A) \swr \pi(B)$. We assume without loss of generality that $a_{0} < b_{0}$, noting that $\pi(A) \swr \pi(B)$ implies that $a_{i} \neq b_{i}$ for all $i$. We claim that $A < B$. Suppose for contradiction that $b_{i} < a_{i}$ for some $i$. We may choose the minimal $i$ such that this is the case. Then $a_{i - 1} < b_{i - 1} < b_{i} < a_{i}$. Since we must have \[\pi(a_{i - 1}) < \pi(b_{i - 1}) < \pi(a_{i}) < \pi(b_{i}),\] we must have $a_{i} - a_{i - 1} > n + 2d + 1$. Hence $a_{d} - a_{0} > n + 2d + 1 > n + 2d - 1$, which contradicts $A \in \derset{n + 2d + 1}{d}$.

Therefore $A < A + \onevec \leqslant B$, which means that here is then a path $A \leadsto A + \onevec \leadsto B$, so $A + \onevec \in \pi(S_{0})$ by convexity. But this contradicts the fact that $S$ contains one vertex from every $\nu_{d}$-orbit. Hence $\pi(S_{0})$ is a non-intertwining subset of $\nonconsec{n + 2d + 1}{d}$ of size $\binom{n + d - 1}{d}$, and so gives a triangulation of $C(n + 2d + 1, 2d)$.
\end{proof}

A similar argument also shows the following lemma, which will be useful later.

\begin{lemma}\label{lem:conv->slice}
If $S$ is a convex subquiver of $\tqdn$ such that $\pi(S_{0})$ is a triangulation of $C(n+2d+1,2d)$, then $S$ is a slice. 
\end{lemma}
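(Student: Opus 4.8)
The plan is to adapt the proof of Proposition~\ref{prop:slice->triang}, reversing its logic. Since convexity is given as a hypothesis and is the second defining condition of a slice, the only thing left to verify is the first condition: that every $\nu_d$-orbit of $\tqdn$ contains exactly one vertex of $S$. (I treat ``subquiver'' as meaning full subquiver, so that $S$ is determined by $S_0$.) I would first recall from the proof of Proposition~\ref{prop:slice->triang} the two numerical facts I need: the number of $\nu_d$-orbits equals $\binom{n+d-1}{d}$, and a triangulation of $C(n+2d+1,2d)$ consists of exactly $\binom{n+d-1}{d}$ $d$-arcs. Write $N := \binom{n+d-1}{d}$.

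The key observation, which is implicit in Proposition~\ref{prop:slice->triang} and which I would isolate as the engine of the argument, is that for any $A \in \derset{n+2d+1}{d}$ the two $d$-arcs $\pi(A)$ and $\pi(A+\onevec)$ are distinct and intertwining. Distinctness is immediate, since a proper nonempty subset of $\mathbb{Z}/(n+2d+1)\mathbb{Z}$ cannot be invariant under the shift by $1$. For intertwining, the defining inequalities $a_{i+1} \geqslant a_i + 2$ and $a_d + 2 \leqslant a_0 + n + 2d + 1$ give the strict integer chain $a_0 < a_0 + 1 < a_1 < a_1 + 1 < \dots < a_d < a_d + 1$ lying inside a window of length less than $n + 2d + 1$; projecting modulo $n + 2d + 1$ preserves this as a cyclic ordering, which is exactly $\pi(A) \swr \pi(A + \onevec)$.

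With this in hand, I would show that each $\nu_d$-orbit meets $S_0$ at most once. Suppose two vertices of the same orbit lie in $S_0$; after relabelling they are $A$ and $A + k\onevec$ with $k \geqslant 1$. Since $A \leqslant A + \onevec \leqslant A + k\onevec$, concatenating the corresponding paths yields a path from $A$ to $A + k\onevec$ passing through $A + \onevec$, so convexity of $S$ forces $A + \onevec \in S_0$. Then $\pi(A)$ and $\pi(A + \onevec)$ are two distinct intertwining $d$-arcs both lying in the triangulation $\pi(S_0)$, contradicting the fact that a triangulation is non-intertwining. Hence $S_0$ meets every orbit at most once, and therefore $|S_0| \leqslant N$.

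Finally I would close the count. Because $\pi(S_0)$ is a triangulation we have $|\pi(S_0)| = N$, and since $\pi$ is a map out of $S_0$ we have $N = |\pi(S_0)| \leqslant |S_0| \leqslant N$. Thus $|S_0| = N$, and a pigeonhole argument against the $N$ orbits upgrades ``at most one'' to ``exactly one''. This establishes the first slice condition, and together with the given convexity shows that $S$ is a slice. I expect the only genuine subtlety to be the intertwining claim in the second paragraph, namely checking that the window-length bound coming from $\derset{n+2d+1}{d}$ is tight enough that the cyclic order of the residues is not disrupted; the remaining steps are the counting bookkeeping already rehearsed in Proposition~\ref{prop:slice->triang}.
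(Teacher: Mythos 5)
Your proof is correct and follows essentially the same route as the paper: both arguments use convexity to force $A + \onevec$ into $S_{0}$ whenever two vertices of one $\nu_{d}$-orbit lie in $S$, and then contradict the non-intertwining property of the triangulation $\pi(S_{0})$ via the intertwining of $\pi(A)$ and $\pi(A + \onevec)$. You are in fact slightly more thorough than the paper, which asserts the intertwining of $\pi(A)$ and $\pi(A+\onevec)$ without verification and leaves implicit the counting step (there are $\binom{n+d-1}{d}$ orbits and $|\pi(S_{0})| = \binom{n+d-1}{d}$, so ``at most one per orbit'' upgrades to ``exactly one''); making both explicit fills genuine small gaps rather than diverging from the paper's approach.
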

\begin{proof}
Suppose that $S$ is a convex subquiver such that $\pi(S_{0})$ is a triangulation of $C(n + 2d + 1, 2d)$. Suppose for contradiction that $S$ possesses two vertices $A$ and $B$ which are in the same $\nu_{d}$-orbit. There is then either a path $A \leadsto B$ or a path $B \leadsto A$. Without loss of generality, we suppose the former. But then there is a path $A \leadsto A + \onevec \leadsto B$, so we must have $A + \onevec \in S_{0}$ by convexity. This is a contradiction, since $\pi(A)$ and $\pi(A + \onevec)$ are intertwining.
\end{proof}

The main theorem of this section is as follows.

\begin{theorem}\label{thm:interior}
A triangulation $\mathcal{T}$ of $C(n + 2d + 1, 2d)$ has no interior $(d+1)$-simplices if and only if its quiver is a cut of $\qdn$, and this is the case if and only if its quiver has no cycle.
\end{theorem}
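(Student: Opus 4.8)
The plan is to establish the cyclic chain of implications
\[
(\text{no interior } (d+1)\text{-simplex}) \implies (Q(\mathcal{T}) \cong Q_{C}^{(d,n)}) \implies (Q(\mathcal{T}) \text{ acyclic}) \implies (\text{no interior } (d+1)\text{-simplex}),
\]
which together give the stated three-way equivalence. The middle implication is the cheapest: if $Q(\mathcal{T})$ is a cut quiver then, by \cite{io}, it is isomorphic to a slice $S$ of $\tqdn$, and every arrow of $\tqdn$ strictly increases the coordinate sum $\sum_{i} a_{i}$; hence $\tqdn$, and with it any full subquiver such as $S \cong Q(\mathcal{T})$, contains no oriented cycle.

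For $(Q(\mathcal{T}) \text{ acyclic}) \implies (\text{no interior})$ I would argue the contrapositive and show that an interior $(d+1)$-simplex forces an oriented cycle. Let $C = (c_{0}, \dots, c_{d+1})$ be an interior $(d+1)$-simplex, so that each facet $C_{j} := (c_{0}, \dots, \widehat{c_{j}}, \dots, c_{d+1})$ is a $d$-arc. Since for $d \geqslant 1$ any two cyclically adjacent indices lie together in some facet, all the $c_{i}$ are cyclically non-consecutive, i.e. $c_{i+1} \geqslant c_{i} + 2$ and $c_{d+1} + 2 \leqslant c_{0} + (n + 2d + 1)$. A direct check of the defining inequalities for $\swr$ then yields
\[
(C_{j} - \onevec) \swr C_{j-1} \qquad \text{for every } j \in \mathbb{Z}/(d+2),
\]
the witnessing cyclic interleaving being $c_{0} - 1 < c_{0} < c_{1} - 1 < c_{1} < \dots < c_{d+1} - 1 < c_{d+1}$, where the non-consecutiveness of the $c_{i}$ gives each inequality and the wrap-around inequality $c_{d+1} + 2 \leqslant c_{0} + (n + 2d + 1)$ closes the cycle. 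By Lemma~\ref{lem:find_path} there is a path $C_{j} \leadsto C_{j-1}$ for each $j$, and concatenating these around $j = d+1, d, \dots, 0$ produces a closed oriented walk through the distinct vertices $C_{0}, \dots, C_{d+1}$, which contains an oriented cycle. I am confident in this step; the only care needed is the bookkeeping that identifies which entries of $C_{j} - \onevec$ and $C_{j-1}$ play the roles of $a_{k}$ and $b_{k}$ in the intertwining pattern.

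The remaining implication $(\text{no interior}) \implies (Q(\mathcal{T}) \cong Q_{C}^{(d,n)})$ is where I expect the real work to lie, and I would reduce it to producing a convex lift via Lemma~\ref{lem:conv->slice}. Concretely, I would build a section $\phi \colon \darcs{\mathcal{T}} \to \tqdn_{0}$ of $\pi$ by fixing a lift of one $d$-arc and propagating along the arrows of $Q(\mathcal{T})$: an arrow $A \to A + \rvec$ of Corollary~\ref{cor:quiv_desc} lifts to the $\tqdn$-path $\phi(A) \leadsto \phi(A) + \rvec$. Well-definedness of $\phi$ amounts to the monodromy $\sum \pm\rvec$ vanishing around every closed walk of $Q(\mathcal{T})$, which I would deduce from the absence of interior $(d+1)$-simplices. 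Writing $S := \phi(\darcs{\mathcal{T}})$, one has $\pi(S_{0}) = \darcs{\mathcal{T}}$, so once $S$ is shown to be convex, Lemma~\ref{lem:conv->slice} makes it a slice, and matching the type-$i$ arrows of $Q(\mathcal{T})$ with the $A \to A + \ivec$ arrows of $S$ gives $Q(\mathcal{T}) \cong Q_{C_{S}}^{(d,n)}$ by \cite{io}.

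The hard part will be establishing both the well-definedness and the convexity of this lift, and this is exactly the point at which the hypothesis must be used. For convexity one must show that if $\phi(A) \leqslant \widetilde{C} \leqslant \phi(B)$ for $d$-arcs $A, B$ and an intermediate $\tqdn$-vertex $\widetilde{C}$, then $\widetilde{C} \in S_{0}$. The entry-wise squeezing forces $\pi(\widetilde{C}) \in \nonconsec{n + 2d + 1}{d}$, so the danger is that $\pi(\widetilde{C})$ is a legitimate $d$-arc shape that simply fails to belong to $\mathcal{T}$; since $\darcs{\mathcal{T}}$ is a maximal non-intertwining family, such a $\pi(\widetilde{C})$ would have to intertwine some $d$-arc $D \in \mathcal{T}$. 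The crux of the argument is to unwind this configuration—together with the $2d$-simplices supplied by Lemma~\ref{lem:find_2d_simplex}—into an interior $(d+1)$-simplex, contradicting the hypothesis. I expect this "no gaps" analysis, rather than the formal bookkeeping of $\phi$, to be the genuine obstacle, and it is where Proposition~\ref{prop:slice->triang} and the non-intertwining characterisation of triangulations will be doing the heavy lifting.
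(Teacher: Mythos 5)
Your two short implications are correct and essentially identical to the paper's: ``cut $\implies$ acyclic'' is exactly the paper's argument (cuts are slices, and $\tqdn$ is acyclic since every arrow increases the coordinate sum), and your contrapositive ``interior $(d+1)$-simplex $\implies$ cycle'' reproduces the second half of the paper's Proposition~\ref{prop:no-cyc}: the facets $C_{j}$ of an interior simplex satisfy $(C_{j} - \onevec) \swr C_{j-1}$ (your non-consecutiveness argument for the $c_{i}$ and the interleaving bookkeeping check out), so Lemma~\ref{lem:find_path} concatenates to a closed oriented path, hence a cycle.

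The genuine gap is the direction ``no interior $(d+1)$-simplices $\implies$ cut'', which you present as a plan rather than a proof: both the well-definedness of your lift $\phi$ and the convexity of its image are deferred, and these are precisely where the hypothesis has to be converted into quiver information. Note in particular that well-definedness of $\phi$ already presupposes that $Q(\mathcal{T})$ is acyclic (propagating a lift around an oriented cycle strictly increases coordinate sums, so the monodromy cannot vanish), and within your cyclic chain you cannot invoke acyclicity at this point without circularity -- so you would first have to prove ``no interior $\implies$ acyclic'' directly, which is the hard half of Proposition~\ref{prop:no-cyc} and is not covered by your step~2. The paper resolves both issues with one combinatorial crux that your sketch never formulates: Lemma~\ref{lem:simp_arrow_switch} (if $A \to A + E_{i} \to A + E_{i} + E_{j}$ are arrows of $Q(\mathcal{T})$ and $A + E_{j}$ is a valid $d$-arc shape, then $A + E_{j}$ is a $d$-arc of $\mathcal{T}$ and the switched path is in $Q(\mathcal{T})$); its proof is exactly the step where a failure of the quiver property is unwound into an interior $(d+1)$-simplex, together with Lemma~\ref{lem:single-step-arrows}. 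Once this ``switching'' property is available, purely formal arguments (Lemmas~\ref{lem:all_paths} and~\ref{lem:walk_to_path}, Corollary~\ref{cor:switch+con->conv}) show that a connected component of the full preimage of $\darcs{\mathcal{T}}$ in $\tqdn$ is convex and maps bijectively onto $Q(\mathcal{T})$, whereupon Lemma~\ref{lem:conv->slice} and \cite[Theorem 5.24]{io} finish as in your endgame. Your proposal names the right target (a convex lift plus Lemma~\ref{lem:conv->slice}) and even guesses the right kind of contradiction, but without the key lemma and its proof the argument is incomplete.
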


Our strategy for proving this theorem is to prove facts about the quivers $Q(\mathcal{T})$ for triangulations $\mathcal{T}$ of $C(n + 2d + 1, 2d)$ without internal $(d + 1)$-simplices. We then use these properties to show that one can realise $Q(\mathcal{T})$ as a slice of $\tqdn$, which will imply that $Q(\mathcal{T})$ is a cut of $\qdn$. The idea of the proofs of both of the following lemmas is that if $Q(\mathcal{T})$ does not possess a certain property, then we can find an interior $(d + 1)$-simplex in~$\mathcal{T}$.

\begin{lemma}\label{lem:single-step-arrows}
If $\mathcal{T}$ has no interior $(d+1)$-simplices, then the arrows in $Q(\mathcal{T})$ are all of the form $A \rightarrow A + E_{i}$.
\end{lemma}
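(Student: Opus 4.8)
The plan is to prove the contrapositive: I assume that $Q(\mathcal{T})$ contains an arrow $A \to A + \rvec$ with $r \geqslant 2$, and from it I construct an interior $(d+1)$-simplex of $\mathcal{T}$, contradicting the hypothesis. By Corollary~\ref{cor:quiv_desc} every arrow has this shape, so establishing that $r$ cannot exceed $1$ is exactly the statement.

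First I would collect the data attached to the arrow. Writing $A = (a_0, \dots, a_d)$ and $B := A + \rvec$, Corollary~\ref{cor:quiv_desc} gives $A, B \in \nonconsec{n + 2d + 1}{d}$, and these two tuples agree in every entry except the $i$-th, where $b_i = a_i + r$. Crucially, Proposition~\ref{prop:all_but_one_vertex} supplies a $2d$-simplex $T$ of $\mathcal{T}$ having both $A$ and $B$ as faces.

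The heart of the argument is to look at the simplex $P$ on the union of the two vertex sets,
\[
P = (a_0, \dots, a_{i-1}, a_i, a_i + r, a_{i+1}, \dots, a_d),
\]
a $(d+1)$-simplex with $d + 2$ distinct vertices. Since the vertices of $A$ and of $B$ all lie among the vertices of $T$, so does $A \cup B$, and hence $P$ is a $(d+1)$-dimensional face of $T$; in particular $P$ is a $(d+1)$-simplex of $\mathcal{T}$. I would then verify that every consecutive gap of $P$ is at least $2$: the gaps inherited from $A$ or from $B$ are at least $2$ because $A, B \in \nonconsec{n + 2d + 1}{d}$, while the new gap between $a_i$ and $a_i + r$ equals $r \geqslant 2$ --- this is the single place where the hypothesis $r \geqslant 2$ enters. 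Deleting any one vertex of $P$ only merges two adjacent gaps, so each facet of $P$ again has all gaps at least $2$ and therefore satisfies the non-consecutive condition defining $\nonconsec{n + 2d + 1}{d}$. Each facet is thus an internal $d$-simplex, and being a face of $T \in \mathcal{T}$ it is a $d$-arc of $\mathcal{T}$. Hence all facets of $P$ are $d$-arcs, so $P$ is an interior $(d+1)$-simplex, giving the contradiction.

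I expect the main conceptual point to be the realisation that the correct object is the simplex on $A \cup B$, together with the fact, provided by Proposition~\ref{prop:all_but_one_vertex}, that it genuinely occurs as a face of a $2d$-simplex of $\mathcal{T}$ rather than being merely a combinatorially admissible tuple. The only calculational care needed is the bookkeeping for the cyclic gap from $a_d$ to $a_0$ in the boundary cases $i = 0$ and $i = d$; in each case the inequality required for $P$, and hence for its facets, is inherited directly from $A, B \in \nonconsec{n + 2d + 1}{d}$, so this is routine.
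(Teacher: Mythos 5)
Your proof is correct and takes essentially the same route as the paper: the paper also invokes Proposition~\ref{prop:all_but_one_vertex} to place $A$ and $A + \rvec$ as faces of a common $2d$-simplex and then observes that $(a_{0}, \dots, a_{i}, a_{i} + r, a_{i+1}, \dots, a_{d})$ is an interior $(d+1)$-simplex when $r > 1$. The only difference is that you spell out the gap-counting verification that every facet of this $(d+1)$-simplex lies in $\nonconsec{n + 2d + 1}{d}$, which the paper leaves implicit.
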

\begin{proof}
By Proposition~\ref{prop:all_but_one_vertex}, every arrow is of the form $A \rightarrow A + \rvec$ for some $r>0$ and for each such arrow, we have that $(a_{0}, a_{1}, \dots, a_{i}, a_{i} + r, a_{i+1}, a_{i + 2}, \dots, a_{d})$ is a face of a $2d$-simplex of $\mathcal{T}$. If $r>1$, then this is an interior $(d+1)$-simplex.
\end{proof}

\begin{lemma}\label{lem:simp_arrow_switch}
Suppose that $\mathcal{T}$ has no interior $(d+1)$-simplices. If $A \rightarrow A + E_{i} \rightarrow A + E_{i} + E_{j}$ is a sequence of arrows in $Q(\mathcal{T})$, then, if $A + E_{j} \in \nonconsec{m}{d}$, the sequence $A \rightarrow A + E_{j} \rightarrow A + E_{i} + E_{j}$ is also in $Q(\mathcal{T})$.
\end{lemma}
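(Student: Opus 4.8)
The case $i = j$ is immediate, so I assume $i \neq j$; by passing to a suitable cyclically shifted order I may take $A = (a_{0}, \dots, a_{d})$ to be increasing with $i < j$. The whole statement reduces to the single claim that $A + E_{j} \in \darcs{\mathcal{T}}$. Indeed, the hypothesis $A + E_{j} \in \nonconsec{m}{d}$ guarantees that $A + E_{j}$ is a legitimate $d$-arc, and once it is known to be a $d$-arc \emph{of $\mathcal{T}$}, Corollary~\ref{cor:quiv_desc} produces both arrows at once: $A$ and $A + E_{j}$ are $d$-arcs of $\mathcal{T}$ differing by the single step $E_{j}$, so $A \to A + E_{j}$ is an arrow; likewise $A + E_{j}$ and $A + E_{i} + E_{j}$ differ by $E_{i}$, giving $A + E_{j} \to A + E_{i} + E_{j}$. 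So the entire content is the membership $A + E_{j} \in \darcs{\mathcal{T}}$.

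\textbf{Setting up the contradiction.} Suppose for contradiction that $A + E_{j} \notin \darcs{\mathcal{T}}$. Triangulations correspond to non-intertwining collections of the maximal size $\binom{n + d - 1}{d}$, so $\darcs{\mathcal{T}}$ is maximal among non-intertwining collections; hence $A + E_{j}$ must intertwine some $C \in \darcs{\mathcal{T}}$, say $(A + E_{j}) \swr C$ (the reverse direction being handled symmetrically). The leverage is that $A$, $A + E_{i}$ and $A + E_{i} + E_{j}$ all lie in $\darcs{\mathcal{T}}$ and therefore do \emph{not} intertwine $C$. Since $A + E_{j}$ differs from $A$ only in coordinate $j$ and from $A + E_{i} + E_{j}$ only in coordinate $i$, comparing the interleaving inequalities of $(A + E_{j}) \swr C$ against the failure of $A \swr C$ and of $(A + E_{i} + E_{j}) \swr C$ at exactly these two coordinates forces
\[ c_{j - 1} = a_{j}, \qquad c_{i} = a_{i} + 1, \]
and, reading off the intermediate coordinates, the tight interleaving
\[ a_{i} + 1 = c_{i} < a_{i + 1} < c_{i + 1} < \dots < a_{j - 1} < c_{j - 1} = a_{j}. \]

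\textbf{Ruling out $C$ via a $2d$-simplex.} By Lemma~\ref{lem:find_2d_simplex} there is a $2d$-simplex $T_{C}$ of $\mathcal{T}$ with $e(T_{C}) = C$; write $T_{C} = (c_{0}, w_{1}, c_{1}, w_{2}, \dots, w_{d}, c_{d})$, so that $c_{k - 1} < w_{k} < c_{k}$. Any non-consecutive $d$-subset of the vertices of $T_{C}$ is a face of $T_{C}$, hence an internal $d$-simplex of $\mathcal{T}$, hence a $d$-arc of $\mathcal{T}$ that \emph{cannot} intertwine any of $A$, $A + E_{i}$, $A + E_{i} + E_{j}$; moreover $T_{C}$ contains no interior $(d + 1)$-simplex. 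The plan is to show that the odd vertices $w_{i + 1}, \dots, w_{j}$, which are squeezed in among the tightly interleaved values above together with the shifts $a_{i} + 1, \dots, a_{j} + 1$, cannot be placed consistently with both constraints. The two prototypical obstructions I expect are: a face of $T_{C}$ assembled from the shifted values (the incarnation of the intertwining $A \swr (A + \onevec)$) intertwines $A$, whereas a face that reuses the pinned values $c_{i} = a_{i} + 1$ and $c_{j - 1} = a_{j}$ intertwines $A + E_{i} + E_{j}$; the remaining placements of the $w_{k}$ force an interior $(d + 1)$-simplex inside $T_{C}$.

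\textbf{Main obstacle.} The step I expect to be genuinely hard is this last one: organising the case analysis on the positions of $w_{i + 1}, \dots, w_{j}$ so that one of the two contradictions is always reached, uniformly for all $d$ and all $i < j$ rather than only in small examples. Once this is carried out, $C$ cannot exist, so $A + E_{j} \in \darcs{\mathcal{T}}$ and the lemma follows from the reduction above. (It is worth double-checking that the symmetric hypothesis $C \swr (A + E_{j})$ yields the mirror relations $c_{j} = a_{j}$ and $c_{i + 1} = a_{i} + 1$, to which the same $T_{C}$-argument applies.)
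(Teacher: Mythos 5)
Your reduction is correct: by Corollary~\ref{cor:quiv_desc} both arrows are automatic once $A + E_{j} \in \darcs{\mathcal{T}}$ (a single-step difference between two $d$-arcs of $\mathcal{T}$ is always an arrow, since the condition on intermediate vertices is vacuous for $r = 1$), so the lemma is exactly the membership claim. Your derivation of the forced equalities is also sound: if $(A+E_{j}) \swr C$ for some $C \in \darcs{\mathcal{T}}$, then non-intertwining of $C$ with $A$ and with $A + E_{i} + E_{j}$ pins $c_{j-1} = a_{j}$ and $c_{i} = a_{i}+1$, and the mirror case is as you state. But the proof has a genuine gap, which you flag yourself: the actual contradiction --- showing that no such $C$ can exist --- is deferred as the ``main obstacle''. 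That deferred step is the entire mathematical content of the lemma; in particular it is the only place where the hypothesis that $\mathcal{T}$ has no interior $(d+1)$-simplices can enter, and that hypothesis appears in your text only inside the unexecuted plan. (Your constraints do finish the job in the special case $j = i+1$: there $c_{j-1} = c_{i}$, so $a_{i+1} = a_{i}+1$, contradicting $A \in \nonconsec{m}{d}$. But for $j - i \geqslant 2$ the constraints on $C$ are perfectly consistent on their own, and no argument is given.)

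The paper's proof shows that the hard step can be avoided altogether by analysing a different $2d$-simplex. Instead of working with a hypothetical intertwining partner $C$ and its simplex $T_{C}$ --- whose odd vertices $w_{k}$ are only loosely tied to $A$ --- the paper applies Lemma~\ref{lem:find_2d_simplex} to the known arc $A + E_{i} + E_{j}$ (reordered so that $i = d$), obtaining a $2d$-simplex $T = (a_{0}, x_{1}, a_{1}, \dots, x_{j}, a_{j}+1, x_{j+1}, \dots, x_{d}, a_{d}+1)$ of $\mathcal{T}$. Then a short case analysis suffices: if $x_{d} = a_{d}$, then $A + E_{j}$ is a face of $T$ and hence a $d$-arc of $\mathcal{T}$, and we are done; if $x_{d} \leqslant a_{d}-1$, then either $x_{j} = a_{j-1}+1$, in which case the face $(x_{1}, \dots, x_{d}, a_{d}+1)$ of $T$ intertwines the arc $A$ (impossible for two simplices of a triangulation), or $x_{j} \geqslant a_{j-1}+2$, in which case $(a_{0}, \dots, a_{j-1}, x_{j}, x_{j+1}, \dots, x_{d}, a_{d}+1)$ is an interior $(d+1)$-simplex, contradicting the hypothesis. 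If you wish to complete your route you would need an analogous dichotomy for $T_{C}$, where the available inequalities are weaker and it is unclear the argument closes; switching to the paper's choice of simplex (which also removes the appeal to inclusion-maximality of non-intertwining collections, an extra input your argument requires) is the cleaner fix.
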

\begin{proof}
We assume $A + E_{j} \in \nonconsec{m}{d}$. By re-ordering, we may assume that $i = d$. Then there is a $2d$-simplex $T$ of $\mathcal{T}$ such that $e(T) = A + E_{j} + E_{d}$ by Lemma~\ref{lem:find_2d_simplex}.

Let $T = (a_{0}, x_{1}, a_{1}, \dots, x_{j}, a_{j} + 1, x_{j + 1}, \dots, x_{d}, a_{d} + 1)$. If $x_{d} = a_{d}$, then $A + E_{j}$ is a $d$-face of $T$ and hence a $d$-arc of $\mathcal{T}$. Hence, suppose for contradiction that $a_{d} \neq x_{d}$, so that $a_{d - 1} < x_{d} \leqslant a_{d} - 1$. Note that if $x_{j} = a_{j - 1} + 1$, then $(a_{0}, a_{1}, \dots , a_{d}) \swr (x_{1}, x_{2}, \dots ,x_{d}, a_{d} + 1)$, which is a $d$-face of $T$. Hence $a_{j - 1} + 2 \leqslant x_{j} < a_{j} + 1$. Then $(a_{0}, a_{1}, \dots, a_{j - 1}, x_{j}, x_{j + 1}, \dots , x_{d}, a_{d} + 1)$ is a $(d + 1)$-face of $T$ and an interior $(d+1)$-simplex of $\mathcal{T}$, a contradiction.
\end{proof}

We now prove some facts about the quivers $\tqdn$, which will be useful in proving the main theorem of this section. These will be used in combination with the previous two lemmas to show that one can realise $\mathcal{Q}(\mathcal{T})$ as a slice if $\mathcal{T}$ has no interior $(d + 1)$-simplices. We say that a full subquiver $P$ of $\tqdn$ is \emph{switching-closed} if whenever $A \rightarrow A + E_{i} \rightarrow A + E_{i} + E_{j}$ is a sequence of arrows in $P$ and $A + E_{j} \in \tqdn_{0}$, then the sequence of arrows $A \rightarrow A + E_{j} \rightarrow A + E_{i} + E_{j}$ is also in $P$.

\begin{lemma}\label{lem:all_paths}
Let $P$ be a switching-closed full subquiver of $\tqdn$. If there is a path $A \leadsto B$ in $P$, all other paths $A \leadsto B$ in $\tqdn$ must also lie in $P$.
\end{lemma}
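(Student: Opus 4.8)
The plan is to exploit two features of the hypotheses. First, $P$ is a \emph{full} subquiver, so a path lies in $P$ as soon as all of its vertices do (the arrows between them are then automatically present); hence it suffices to control vertices. Second, switching-closedness transfers membership across an elementary rearrangement of a path. Accordingly, I would fix the given path $p\colon A \leadsto B$ lying in $P$ and an arbitrary path $q\colon A \leadsto B$ in $\tqdn$, and show that $p$ can be transformed into $q$ by a sequence of valid \emph{switches}, each replacing a consecutive pair $V \to V + E_{i} \to V + E_{i} + E_{j}$ by $V \to V + E_{j} \to V + E_{i} + E_{j}$. Whenever both intermediate vertices $V + E_{i}$ and $V + E_{j}$ lie in $\tqdn_{0}$, switching-closedness (which is symmetric in $i$ and $j$, and so applies in either direction) guarantees that the rearranged path still lies in $P$. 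Thus the lemma reduces to the purely combinatorial claim that any two paths $A \leadsto B$ are connected through such valid switches.

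A path from $A$ to $B$ is a sequence of unit increments $E_{c}$ whose partial sums all lie in $\derset{n + 2d + 1}{d}$; since this set is cut out by the gap conditions $a_{i + 1} \geqslant a_{i} + 2$ and $a_{d} - a_{0} \leqslant n + 2d - 1$, the only way a switch can fail to be valid is that the swapped intermediate vertex violates exactly one such gap. A direct check shows this happens precisely when $\{i, j\}$ is a consecutive pair (or the wraparound pair $\{d, 0\}$) with the relevant gap tight; in particular the far corner $V + E_{i} + E_{j}$ is always valid once $V + E_{i}$ and $V + E_{j}$ are.

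The engine of the connectivity argument is a \emph{freezing} observation. Let $V$ be the first vertex at which $p$ and $q$ diverge, say $p$ increments $i$ and $q$ increments $j \neq i$. Since $q$ increments coordinate $j$ at $V$, that coordinate must also be incremented somewhere later in $p$; take the first such increment. Between $V$ and this increment coordinate $j$ is frozen, so at every intermediate vertex $W$ of $p$ in this stretch one has $w_{j} = v_{j}$ and $W \geqslant V$. Combining this with the validity of $V + E_{j}$ (which holds because it is $q$'s step) gives $W + E_{j} \in \derset{n + 2d + 1}{d}$, so each leftward swap that bubbles this type-$j$ increment back to the position immediately after $V$ is a valid switch. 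Carrying these out transforms $p$, inside $P$, into a path agreeing with $q$ one step further along. Induction on the length of the common prefix (with the trivial paths as base case) then drives $p$ all the way to $q$; as every intermediate path lies in $P$, we conclude that $q \subseteq P$, and since $q$ was arbitrary the lemma follows.

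I would flag the freezing observation as the main obstacle. The naive hope of connecting paths by arbitrary adjacent transpositions fails, because a swapped intermediate can leave $\derset{n + 2d + 1}{d}$; the content of the argument is precisely that this never obstructs us provided we bubble the \emph{correct} increment, namely the one frozen since the last point of agreement, whose slack is then guaranteed. This is a hands-on incarnation of the classical fact that the graph of linear extensions of a finite poset is connected under adjacent transpositions, the poset here being generated by the gap constraints; but I expect the direct freezing argument to be cleaner to write than setting up that dictionary.
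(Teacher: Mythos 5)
Your proof is correct, and it takes a genuinely different route from the paper's. The paper argues by induction on the total path length $\sum_{i=0}^{d}(b_{i}-a_{i})$, working from the $B$-end: if $B-E_{i}$ is the penultimate vertex of the known path and $B-E_{j}$ is any candidate penultimate vertex with $A \leqslant B-E_{j}$, then the inductive hypothesis applied to $A$ and $B-E_{i}$ puts $B-E_{i}-E_{j}$ into $P$, a single switch at this corner puts $B-E_{j}$ into $P$, and the inductive hypothesis applied to $A$ and $B-E_{j}$ then absorbs every path through $B-E_{j}$. You instead fix the target path $q$ and explicitly transform the given path $p$ into $q$ by adjacent switches, which proves the stronger combinatorial statement that all paths $A\leadsto B$ in $\tqdn$ are connected under valid switches, membership in $P$ being carried along by switching-closedness. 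Each approach rests on one verification in $\derset{n+2d+1}{d}$ that is stated tersely: the paper implicitly needs that $B,\,B-E_{i},\,B-E_{j} \in \tqdn_{0}$ forces $B-E_{i}-E_{j} \in \tqdn_{0}$ (without this there is no path through that corner and the inductive hypothesis cannot be invoked --- a point the paper glosses over), while you need the freezing lemma, whose ``direct check'' is the crux of your argument and deserves to be written out: raising the frozen coordinate $j$ at a vertex $W \geqslant V$ with $w_{j}=v_{j}$ only threatens the gap condition at the pair $(j,j+1)$ or the wraparound condition, and both follow from $V+E_{j}\in\tqdn_{0}$ together with $W\geqslant V$ and $w_{j}=v_{j}$. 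The paper's induction is shorter and never needs to know when a switch can fail; yours is longer to write in full but more constructive, localises the possible failures exactly (a tight consecutive or wraparound gap), and makes the connectivity-of-paths picture --- the analogue of connectivity of linear extensions under adjacent transpositions --- explicit rather than implicit.
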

\begin{proof}
Suppose that we have a path $A \leadsto B$ in $P$. The length of all such paths is $\sum_{i=0}^{d}(b_{i} - a_{i})$. We prove the claim by induction on this quantity. The base case, where the length is $1$, follows from the fact that $P$ is a full subquiver of $\tqdn$.

For the inductive step, we assume that the claim holds for all $X$ and $Y$ with $\sum_{i = 0}^{d}(y_{i} - x_{i}) < \sum_{i = 0}^{d}(b_{i} - a_{i})$. We have that $B$ is the head of up to $(d + 1)$ arrows, namely the ones with tails $B - E_{0}, B - E_{1}, \dots, B - E_{d}$, provided these are vertices of $\tqdn$. Suppose that $B - E_{i}$ is the penultimate vertex of our path $A \leadsto B$.

Choose a vertex $B - E_{j} \in \tqdn_{0}$, where $i \neq j$. If $A \nleq B - E_{j}$, then we may ignore this vertex, since there can be no paths from $A$ to $B$ through it. Hence we assume that $A \leqslant B - E_{j}$. This implies that $A \leqslant B - E_{i} - E_{j} \leqslant B - E_{i}$, so $B - E_{i} - E_{j} \in Q_{0}$ by the induction hypothesis applied to $A$ and $B - E_{i}$.

Since $P$ is switching-closed, we have that $B - E_{j} \in P_{0}$, since $B - E_{i} - E_{j}, B - E_{i}, B \in P_{0}$. By the induction hypothesis, all paths $A \leadsto B - E_{j}$ lie in $P$, and hence all paths $A \leadsto B$ passing through $B - E_{j}$ lie in $P$. The result follows.
\end{proof}

By a \emph{walk} in a quiver $Q$ from $A$ to $B$ we mean a finite sequence of arrows $\beta_{1}\beta_{2}\dots\beta_{s}$ such that $\beta_{1}$ is incident at $A$, $\beta_{s}$ is incident at $B$, and $\beta_{i-1}$ and $\beta_{i}$ are incident at a common vertex for all $i \in \{2, 3, \dots, s\}$. In this case, we write $A \walk B$. That is, a path only consists of forwards arrows, but a walk may contain backwards arrows as well.

\begin{lemma}\label{lem:walk_to_path}
Let $P$ be a connected switching-closed full subquiver of $\tqdn$. If $A,B \in P_{0}$ are such that there is a path $A \leadsto B$ in $\tqdn$, then there is a path $A \leadsto B$ in $P$.
\end{lemma}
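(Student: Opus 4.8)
The plan is to use the connectedness of $P$ to produce a walk $A \walk B$ and then to straighten it into a directed path by induction on its length, the key point being that a walk consisting only of forward arrows is precisely a path $A \leadsto B$. Throughout, the main engine is Lemma~\ref{lem:all_paths}: if $X, Y \in P_{0}$ are the endpoints of a maximal directed run of the walk then $X \leqslant Y$ and all directed paths $X \leadsto Y$ lie in $P$, so the entire box $\{Z : X \leqslant Z \leqslant Y\}$ is contained in $P_{0}$. I will also use that $\derset{n + 2d + 1}{d}$ is closed under the componentwise operations $\wedge$ and $\vee$, which is immediate from the defining gap inequalities; thus joins and meets of vertices of $\tqdn$ are again vertices and may be inserted into walks freely.

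First I would decompose the walk into maximal directed runs between consecutive local extrema and try to shorten it. Consider the first peak $p$, reached from $A$ by a directed run, and let $A \to \dots \to p \to p - E_{a}$ be the first descent. Since $A \leqslant p$ and the whole box below $p$ lies in $P_{0}$, if the dropped coordinate satisfies $p_{a} > A_{a}$ then $p - E_{a} \in \{Z : A \leqslant Z \leqslant p\} \subseteq P_{0}$, and I may replace the subwalk $A \leadsto p \to p - E_{a}$ by the directed path $A \leadsto p - E_{a}$ inside this box, which is strictly shorter; induction then applies. The dual move at the last peak (working from $B$ and using $A \leqslant B$) handles the symmetric situation where a coordinate of a peak exceeds $B$.

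The remaining, and hardest, case is when the first descent lowers a coordinate $a$ to $A_{a} - 1$, strictly below $A$. Because $A \leqslant B$ forces $B_{a} \geqslant A_{a}$, this coordinate must later climb back, so the walk performs an excursion lying entirely in the hyperplane $\{x_{a} = A_{a} - 1\}$, bracketed by the step $p \to p - E_{a}$ and a later step $v \to v + E_{a}$ whose upper endpoints $p$ and $v + E_{a}$ already lie in $P_{0}$. The crux will be to lift this excursion by $E_{a}$ into $P_{0}$: given $Z, Z - E_{c}, Z + E_{a} \in P_{0}$, the switching-closed property supplies $Z - E_{c} + E_{a} \in P_{0}$, which lets me raise the excursion one vertex at a time and then delete the two bracketing steps. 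The interior local maxima of the excursion are exactly the configurations where this one-pass lift stalls, and I expect to handle them by an inner induction that first straightens the (shorter) excursion before lifting it. Iterating all these reductions terminates, and the resulting walk with no backward arrows is the desired path $A \leadsto B$. The principal difficulty to pin down is this excursion-lifting step, where the switching-closed hypothesis, rather than Lemma~\ref{lem:all_paths} alone, is essential.
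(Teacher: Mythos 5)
Your reductions at the two ends of the walk are sound (both are correct applications of Lemma~\ref{lem:all_paths}), but the proof is incomplete exactly where you flag it, and the machinery you propose for the hard case fails in concrete ways. First, the excursion need not lie in the hyperplane $\{x_{a} = A_{a} - 1\}$: nothing prevents the walk from dropping coordinate $a$ further (or dropping other coordinates below $A$) before returning, so the bracketed picture you describe is not the general case. Second, the lifting step is invalid as stated: to conclude $Z - E_{c} + E_{a} \in P_{0}$ from $Z, Z - E_{c}, Z + E_{a} \in P_{0}$, switching-closedness requires the hypothesis $Z - E_{c} + E_{a} \in \tqdn_{0}$, and your lattice observation does not supply this --- $Z - E_{c} + E_{a}$ is the fourth corner of a unit square, not a meet or join of vertices known to lie in $\derset{n + 2d + 1}{d}$, and membership can genuinely fail: for $d = 1$ take $Z = (z_{0}, z_{0} + 3)$, $c = 1$, $a = 0$; then $Z - E_{1} = (z_{0}, z_{0} + 2)$ and $Z + E_{0} = (z_{0} + 1, z_{0} + 3)$ are vertices of $\tqdn$, but $Z - E_{1} + E_{0} = (z_{0} + 1, z_{0} + 2)$ violates the gap condition. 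So the lift can stall even away from local maxima. Third, your fallback of ``first straightening the (shorter) excursion'' cannot invoke the inductive hypothesis: the excursion runs between $p - E_{a}$ and $v$, and there is no reason to have $p - E_{a} \leqslant v$, yet comparability of the endpoints is a hypothesis of the very statement you are inducting on.

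The missing idea is that no local surgery or lifting is needed if you induct on the number of backward arrows and process the walk from its far end, which is what the paper does. One reduces to a walk of the form $A \walk C \leftarrow B$ whose final arrow is backward, so that $C = B + E_{i}$ is itself a vertex of the walk (hence already in $P_{0}$, with no lifting required) and satisfies $A \leqslant B \leqslant C$ automatically. The inductive hypothesis applied to $A \walk C$, which has one fewer backward arrow, gives a path $A \leadsto C$ in $P$; Lemma~\ref{lem:all_paths} then places \emph{every} path $A \leadsto C$ of $\tqdn$ inside $P$, in particular one of the form $A \leadsto B \leadsto C$, whose prefix is the desired path $A \leadsto B$. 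In this argument the switching-closed hypothesis enters only through Lemma~\ref{lem:all_paths}; it is never asked to manufacture new vertices of $P$, which is precisely the operation (producing the ``top'' corner of a square) that it cannot perform and on which your excursion lift gets stuck.
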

\begin{proof}
Let $A,B \in P_{0}$. Suppose that there is a path $A \leadsto B$ in $\tqdn$. There is certainly a walk $W \colon A \walk B$ in $P$, since $P$ is connected. We prove that there is also a path $A \leadsto B$ in $P$ by induction on the number of backwards arrows in this walk. The base case, in which there are zero backwards arrows in the walk, is immediate.

Hence we suppose for induction that the claim holds for walks with fewer backwards arrows than $W$. We may assume that the final arrow in $W$ is a backwards one, otherwise we may remove the final arrow and consider instead the walk $A \walk B'$, where $A \walk B' \rightarrow B$ is the original walk~$W$.

Therefore we can assume that our walk is of the form $A \walk C \leftarrow B$, where $C \in P$. By the induction hypothesis, we can replace this with a walk of the form $A \leadsto C \leftarrow B$ in $P$. Moreover, by Lemma~\ref{lem:all_paths} we have that all paths $A \leadsto C$ in $\tqdn$ lie in $P$. Since we have a path $A \leadsto B$ in $\tqdn$, we have $A \leqslant B$ and, moreover, $A \leqslant B \leqslant C$. There is therefore a path $A \leadsto B \leadsto C$ in $\tqdn$. This path is in $P$, since every path $A \leadsto C$ is in $P$, which gives the desired path $A \leadsto B$ in~$P$.
\end{proof}

These two lemmas imply the following corollary.

\begin{corollary}\label{cor:switch+con->conv}
Let $P$ be a connected switching-closed full subquiver of $\tqdn$. Then $P$ is convex in $\tqdn$.
\end{corollary}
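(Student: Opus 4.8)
The plan is to derive this directly from the two preceding lemmas, which have been set up precisely for this purpose. First I would unwind the definition of convexity: I must show that for any (directed) path $\rho \colon A \leadsto B$ in $\tqdn$ whose endpoints $A, B$ both lie in $P_{0}$, every vertex appearing along $\rho$ belongs to $P_{0}$. Note that the notion of path in the definition of convex is the directed one, so there is no need to worry about backwards arrows at this stage.

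The argument then proceeds in two moves. Since $A, B \in P_{0}$ and the path $\rho$ witnesses the existence of a path $A \leadsto B$ in $\tqdn$, I would apply Lemma~\ref{lem:walk_to_path} — which requires exactly the hypotheses that $P$ is connected and switching-closed — to produce a path $A \leadsto B$ lying entirely within $P$. Having secured one path from $A$ to $B$ inside $P$, I would then invoke Lemma~\ref{lem:all_paths}, using again that $P$ is switching-closed, to conclude that \emph{every} path $A \leadsto B$ in $\tqdn$ in fact lies in $P$. In particular the original path $\rho$ lies in $P$, so all of its vertices belong to $P_{0}$, which is what convexity demands.

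The step I expect to carry the genuine content is the invocation of Lemma~\ref{lem:walk_to_path}: this is where connectedness is consumed to convert an abstract walk within $P$ into an honest directed path, and where the interplay between the poset structure $A \leqslant B$ and the switching-closed hypothesis does the real work. By contrast, Lemma~\ref{lem:all_paths} supplies the "spreading" that upgrades one path to all paths. In short, I anticipate the corollary to be immediate once both lemmas are in hand, with essentially no further obstacle: the only point requiring care is to check that the endpoints of the arbitrary $\tqdn$-path supplied by the convexity hypothesis are precisely the vertices of $P$ to which the two lemmas are applied.
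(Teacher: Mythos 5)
Your proof is correct and follows exactly the same route as the paper's own argument: first use Lemma~\ref{lem:walk_to_path} (consuming connectedness and the switching-closed hypothesis) to obtain one path $A \leadsto B$ inside $P$, then apply Lemma~\ref{lem:all_paths} to conclude that all paths $A \leadsto B$ in $\tqdn$, and hence the given one, lie in $P$. Nothing is missing.
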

\begin{proof}
Suppose that $P$ is a connected switching-closed full subquiver of $\tqdn$. Let $A, B \in P_{0}$ be such that there is a path $A \leadsto B$ in $\tqdn$. By Lemma~\ref{lem:walk_to_path}, there is a path $A \leadsto B$ in $P$. Then, by Lemma~\ref{lem:all_paths}, we have that all paths $A \leadsto B$ lie in $P$, and so $P$ is convex.
\end{proof}

This corollary is useful because it is easier to check the property of being connected and switching-closed than the property of being convex. Given a full subquiver $P$ of $\tqdn$, we write $\overline{P}$ for the smallest switching-closed subquiver containing $P$. The subquiver $\overline{P}$ is well-defined since the intersection of a set of switching-closed full subquivers is switching-closed, so $\overline{P}$ may be constructed as the intersection of all switching-closed subquivers containing $P$.

\begin{proposition}\label{prop:no-cyc}
A triangulation $\mathcal{T}$ contains an interior $(d+1)$-simplex if and only if $Q(\mathcal{T})$ contains a cycle.
\end{proposition}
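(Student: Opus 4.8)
We prove the two implications separately, and I will treat the forward direction by direct construction and the converse by its contrapositive.

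\medskip

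\emph{Interior $(d+1)$-simplex $\Rightarrow$ cycle.} Suppose $\mathcal{T}$ contains an interior $(d+1)$-simplex $U = (u_{0}, \dots, u_{d+1})$, so that each facet $U^{k} := (u_{0}, \dots, \widehat{u_{k}}, \dots, u_{d+1})$, obtained by omitting $u_{k}$, is a $d$-arc of $\mathcal{T}$. The plan is to show that $U^{0}, U^{1}, \dots, U^{d+1}$ are the vertices of a directed cycle in $Q(\mathcal{T})$. For $k \in \{0, 1, \dots, d\}$ the facets $U^{k}$ and $U^{k+1}$ differ only in their $k$-th entry, where $U^{k+1}$ has $u_{k}$ and $U^{k}$ has $u_{k+1}$, so $U^{k} = U^{k+1} + (u_{k+1}-u_{k})E_{k}$; since all consecutive gaps of $U$ are at least $2$ (because the facets lie in $\nonconsec{n+2d+1}{d}$), I would check directly that $(U^{k+1} - \onevec) \swr U^{k}$, so that Lemma~\ref{lem:find_path} yields a path $U^{k+1} \leadsto U^{k}$. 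To close the cycle I would supply a ``wrap-around'' path $U^{0} \leadsto U^{d+1}$: the $2d+2$ entries of $U^{0} - \onevec = (u_{1}-1, \dots, u_{d+1}-1)$ and of $U^{d+1} = (u_{0}, \dots, u_{d})$ interleave in the linear order as $u_{0}, u_{1}-1, u_{1}, u_{2}-1, \dots, u_{d}, u_{d+1}-1$, hence alternate in cyclic order, giving $(U^{0} - \onevec) \swr U^{d+1}$ and, by Lemma~\ref{lem:find_path}, a path $U^{0} \leadsto U^{d+1}$. Composing, $U^{d+1} \leadsto U^{d} \leadsto \cdots \leadsto U^{0} \leadsto U^{d+1}$ is a directed cycle in $Q(\mathcal{T})$, the facets being pairwise distinct.

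\medskip

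\emph{No interior $(d+1)$-simplex $\Rightarrow$ acyclic.} For the converse I would argue the contrapositive. If $\mathcal{T}$ has no interior $(d+1)$-simplex, then by Lemma~\ref{lem:single-step-arrows} every arrow of $Q(\mathcal{T})$ has the form $A \to A + E_{i}$, and by Lemma~\ref{lem:simp_arrow_switch} the quiver $Q(\mathcal{T})$ enjoys the switching property. The plan is to realise $Q(\mathcal{T})$ as a slice of $\tqdn$, which must be acyclic because every arrow of $\tqdn$ strictly increases the coordinate-sum $\sum_{i} a_{i}$, so no directed cycle can occur in any subquiver. To produce the slice I would lift $Q(\mathcal{T})$ into $\tqdn$: fixing a base $d$-arc together with one of its $\pi$-preimages and propagating the choice of lift along the single-step arrows, one obtains a connected full subquiver $S$ of $\tqdn$ on which $\pi$ restricts to a bijection $S_{0} \to \darcs{\mathcal{T}}$ with $S \cong Q(\mathcal{T})$. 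Transporting the switching property through this isomorphism (using Lemma~\ref{lem:simp_arrow_switch}) shows $S$ is switching-closed; being also connected, $S$ is convex by Corollary~\ref{cor:switch+con->conv}. Since $\pi(S_{0}) = \darcs{\mathcal{T}}$ is a triangulation, Lemma~\ref{lem:conv->slice} identifies $S$ as a slice, whence $Q(\mathcal{T}) \cong S$ is acyclic.

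\medskip

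The hard part will be the lifting step: one must verify that propagating the lift along arrows of $Q(\mathcal{T})$ is consistent, i.e. that $S$ meets each $\nu_{d}$-orbit of $\tqdn$ at most once and that the vertices forced into $S$ by switching project back onto $\darcs{\mathcal{T}}$ rather than onto a \emph{second} lift of an arc already present. This is precisely where the absence of interior $(d+1)$-simplices enters: a switching square in $S$ becomes, under $\pi$, the switched pair of arrows supplied by Lemma~\ref{lem:simp_arrow_switch}, and the fourth vertex $A + E_{j}$ of the square projects to a genuine $d$-arc of $\mathcal{T}$. Confirming that this fourth vertex is the unique $S$-lift of that arc—so that $S$ is honestly switching-closed with $\pi|_{S_{0}}$ a bijection—is the technical heart of the argument; along the way one should also check that $Q(\mathcal{T})$ is connected, so that a single connected lift captures the entire quiver.
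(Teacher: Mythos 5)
Your forward direction (interior $(d+1)$-simplex implies cycle) is correct and is essentially the paper's own argument: the paper likewise concatenates paths between consecutive facets of the interior simplex, each supplied by Lemma~\ref{lem:find_path}, and your verification of the intertwining conditions $(U^{k+1} - \onevec) \swr U^{k}$ and $(U^{0} - \onevec) \swr U^{d+1}$ is sound. One small wording issue: the facets $U^{0}, \dots, U^{d+1}$ need not be the \emph{only} vertices of the resulting cycle, since the paths from Lemma~\ref{lem:find_path} may pass through other $d$-arcs (the paper makes exactly this point in the remark following the proposition); but since any closed directed walk through distinct vertices contains a cycle, your conclusion stands.

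The backward direction, however, has a genuine gap, and it sits exactly where you flag it---but it is not a deferred technical verification; it is the entire content of the statement. Your plan is to lift $Q(\mathcal{T})$ to a full subquiver $S$ of $\tqdn$ on which $\pi$ is a bijection onto $\darcs{\mathcal{T}}$, and then invoke Corollary~\ref{cor:switch+con->conv} and Lemma~\ref{lem:conv->slice}. But such a consistent lift can exist \emph{only if} $Q(\mathcal{T})$ is already acyclic: if $Q(\mathcal{T})$ had a directed cycle through a vertex $V$, then propagating the lift along that cycle (each arrow lifting to a single step $+E_{i}$ in $\tqdn$, by Lemma~\ref{lem:single-step-arrows}) would return to a strictly larger preimage of $V$, so $\pi$ could not be injective on $S_{0}$. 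Thus ``the lift is consistent'' implies the acyclicity you are trying to prove, and any proof of consistency must already contain an argument excluding cycles; as written, your proof assumes its conclusion. The paper avoids this by never lifting the whole quiver: it supposes a cycle exists, lifts \emph{only that cycle} to a path $P \colon A \leadsto B$ in $\tqdn$ with $\pi(B) = \pi(A)$ and $B \neq A$, notes that $A + \onevec \leqslant B$ so that a path $A \leadsto A + \onevec \leadsto B$ exists in $\tqdn$, concludes from Lemma~\ref{lem:all_paths} that $A + \onevec$ lies in the switching closure $\overline{P}$, and from Lemma~\ref{lem:simp_arrow_switch} that $\pi(A + \onevec)$ is then a vertex of $Q(\mathcal{T})$---contradicting the fact that $\pi(A + \onevec)$ and $\pi(A)$ intertwine. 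Note finally that the full-lift-to-a-slice construction you propose is precisely the paper's proof of Theorem~\ref{thm:interior}, and that proof \emph{cites} Proposition~\ref{prop:no-cyc} to rule out the inconsistent-lift scenario; so the logical order in the paper is the reverse of the one you need, and completing your route would force you to reproduce the cycle-lifting contradiction above in any case.
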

\begin{proof}
We first suppose for contradiction that $\mathcal{T}$ contains no interior $(d+1)$-simplices and that $Q(\mathcal{T})$ does contain a cycle. By Lemma~\ref{lem:single-step-arrows}, we can realise this cycle as a path $P \colon A \leadsto B$ in $\tqdn$, where $A$ is some vertex in the cycle in $Q(\mathcal{T})$ and $\pi(B) = A$ with $A \neq B$. We consider this path $P$ as a subquiver of $\tqdn$. By Lemma~\ref{lem:all_paths}, every path $A \leadsto B$ in $\tqdn$ must lie in $\overline{P}$. There is a path $A \leadsto A + \onevec \leadsto B$ in $\tqdn$. Hence $A + \onevec$ is a vertex of $\overline{P}$. By Lemma~\ref{lem:simp_arrow_switch}, if $C$ is a vertex of $\overline{P}$, then $\pi(C)$ is a vertex of $Q(\mathcal{T})$. Therefore $\pi(A + \onevec)$ is a vertex of $Q(\mathcal{T})$, but this is a contradiction, since $\pi(A + \onevec)$ and $A$ are intertwining.

We now suppose that $\mathcal{T}$ is a triangulation with an interior $(d + 1)$-simplex  $(a_{0}, \dots, a_{d + 1})$. Then $Q(\mathcal{T)}$ has a cycle given by concatenating the paths
\begin{align*}
(a_{0}, a_{1}, \dots, a_{d - 1}, a_{d}) &\leadsto (a_{0}, a_{1}, \dots , a_{d - 1}, a_{d + 1}) \leadsto (a_{0}, a_{1}, \dots, a_{d - 2}, a_{d}, a_{d + 1}) \leadsto \\
 \dots &\leadsto (a_{1}, a_{2}, \dots, a_{d}, a_{d + 1}) \leadsto (a_{0}, a_{1}, \dots , a_{d}),
\end{align*}
noting Lemma~\ref{lem:find_path}.
\end{proof}

\begin{remark}
For $d = 1$, an interior triangle gives a 3-cycle in the quiver where all the vertices of the cycle are edges of the triangle. For $d > 1$, the cycle obtained in Proposition~\ref{prop:no-cyc} may not exclusively have facets of the interior $(d + 1)$-simplex as its vertices. An example of this can be seen in Figure~\ref{fig:cyc_triang}, where the interior 3-simplex is 1357, but the cycle is $135 \to 136 \to 137 \to 147 \to 157 \to 357 \to 135$. Here $136$ and $147$ are not faces of the interior 3-simplex 1357.
\end{remark}

We are now ready to prove our first main result, noting that the second part of the statement has already been established by Proposition~\ref{prop:no-cyc}.

\begin{proof}[Proof of Theorem~\ref{thm:interior}]
First suppose that $\mathcal{T}$ has no interior $(d+1)$-simplices. We consider the full subquiver $R$ of $\tqdn$ with vertices \[R_{0} = \set{ B \in \tqdn_{0} \st \pi(B) \in Q_{0}(\mathcal{T})}.\] We claim that this is disconnected and that each connected component gives $Q(\mathcal{T})$ by applying $\pi$. Let $A \in Q_{0}(\mathcal{T})$. If $R$ is connected then it contains a walk $W \colon A \walk (a_{1}, a_{2}, \dots, a_{d}, a_{0} + n + 2d + 1)$. We consider $W$ as a subquiver of $\tqdn$ and consider $\overline{W}$. By Lemma~\ref{lem:walk_to_path}, $\overline{W}$ contains a path $P \colon A \leadsto (a_{1}, a_{2}, \dots, a_{d}, a_{0} + n + 2d + 1)$. By Lemma~\ref{lem:simp_arrow_switch}, if $B$ is a vertex of $\overline{W}$, then $\pi(B)$ is a vertex of $Q(\mathcal{T})$. Hence all vertices of $P$ give vertices of $Q(\mathcal{T})$, which therefore contains a cycle. But this contradicts Proposition~\ref{prop:no-cyc}.

By this argument, $R$ is disconnected and, moreover, the vertices of each connected component are in bijection with the vertices of $Q(\mathcal{T})$ via $\pi$, since $Q(\mathcal{T})$ is connected. Moreover, the arrows in each connected component of $R$ are the same as the arrows in $Q(\mathcal{T})$, by Lemma~\ref{lem:single-step-arrows}. Hence, by choosing one of the connected components, we obtain a full subquiver $S$ of $\tqdn$ such that $\pi(S) = Q(\mathcal{T})$. We then have that $S$ is a switching-closed connected subquiver of $\tqdn$, so $S$ is convex by Lemma~\ref{cor:switch+con->conv}. Since $\pi(S_{0})$ is a triangulation, it then follows from Lemma~\ref{lem:conv->slice} that $S$ is a slice. Hence $Q(\mathcal{T})$ is a cut of $\qdn$ by \cite[Theorem 5.24]{io}.

Now suppose that $Q(\mathcal{T})$ is a cut of $\qdn$. Then $Q(\mathcal{T})$ cannot contain any cycles, since cut quivers can be realised as slices, which are full subquivers of $\tqdn$, which does not contain any cycles. Then we obtain that $\mathcal{T}$ contains no interior $(d + 1)$-simplices by Proposition~\ref{prop:no-cyc}.
\end{proof}

Theorem~\ref{thm:interior} implies that the set of triangulations of $C(n + 2d + 1,2d)$ without interior $(d+1)$-simplices is connected by bistellar flips.

\begin{corollary}\label{cor:flips}
The class of triangulations of $C(n + 2d + 1, 2d)$ without interior $(d+1)$-simplices is connected by bistellar flips.
\end{corollary}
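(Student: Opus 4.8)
The plan is to transfer the statement to the combinatorics of slices of $\tqdn$, where connectivity under mutation is already available, and then translate slice mutations back into bistellar flips. Write $\mathsf{NoInt}$ for the class of triangulations of $C(n+2d+1,2d)$ without interior $(d+1)$-simplices. The first step is to record that $\mathsf{NoInt}$ is exactly the set of projections of slices. By Theorem~\ref{thm:interior}, a triangulation $\mathcal{T}$ lies in $\mathsf{NoInt}$ precisely when $Q(\mathcal{T})$ is a cut of $\qdn$, and the proof of that theorem produces, for each such $\mathcal{T}$, a slice $S$ of $\tqdn$ with $\pi(S_{0}) = \darcs{\mathcal{T}}$ and $Q(\mathcal{T}) \cong S$. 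Conversely, Proposition~\ref{prop:slice->triang} shows that every slice $S$ projects to a triangulation $\pi(S_{0})$, and the arrow-matching of Theorem~\ref{thm:interior} identifies its quiver with $S$, so that $\pi(S_{0})$ again has no interior $(d+1)$-simplices. Thus $S \mapsto \pi(S_{0})$ is a surjection from slices onto $\mathsf{NoInt}$.

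The second step is to show that a single slice mutation induces a single bistellar flip within $\mathsf{NoInt}$. If $x$ is a source of a slice $S$, then $\mu_{x}^{+}(S) = (S_{0} \setminus \{x\}) \cup \{\nu_{d}^{-1}x\}$ is again a slice by \cite{io}; since $\pi$ is injective on each slice (Proposition~\ref{prop:slice->triang}), the set $\pi(\mu_{x}^{+}(S)_{0})$ differs from $\pi(S_{0})$ in exactly the one element $\pi(x)$, which is replaced by $\pi(\nu_{d}^{-1}x)$. These two $d$-arcs are distinct, because $\nu_{d}^{-1}x = x + \onevec$ and a cyclic rotation by one can never fix a $d$-arc, and they are intertwining exactly as in the proof of Proposition~\ref{prop:slice->triang}. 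Hence by \cite[Theorem 4.1]{ot} the two triangulations are bistellar flips of one another, and both lie in $\mathsf{NoInt}$ by the first step. The sink case $\mu_{x}^{-}$ is dual.

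The third step is to invoke connectivity at the level of slices. By \cite[Theorem 4.15]{io} every slice of $\tqdn$ arises as an iterated $d$-APR tilt of a fixed reference slice, so any two slices are joined by a finite sequence of mutations $\mu^{\pm}$. Given $\mathcal{T}, \mathcal{T}' \in \mathsf{NoInt}$, I would lift them to slices $S$ and $S'$ as in the first step, take such a mutation sequence $S = S_{0}, S_{1}, \dots, S_{k} = S'$, and project it by $\pi$; by the second step this yields a sequence of bistellar flips inside $\mathsf{NoInt}$ running from $\mathcal{T}$ to $\mathcal{T}'$, which is exactly what is required.

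The main obstacle is the bookkeeping hidden in the first step, namely verifying that $\pi$ carries the arrows of an \emph{arbitrary} slice bijectively onto the arrows of $Q(\pi(S_{0}))$, so that every slice genuinely projects into $\mathsf{NoInt}$ and not merely those slices that arise as lifts of a prescribed triangulation. This is the converse of the quiver identification proved in Theorem~\ref{thm:interior}, and it is the crux of the argument: a bistellar flip of a triangulation in $\mathsf{NoInt}$ need not itself lie in $\mathsf{NoInt}$, so it is precisely the slice structure of each intermediate $S_{i}$ that forces each intermediate triangulation to stay in the class. Once this identification is in hand, everything else is the routine dictionary between the one-vertex change of a slice mutation and the one-$d$-arc change of a bistellar flip.
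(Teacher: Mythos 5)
Your proposal is correct and follows essentially the same route as the paper: realise the triangulations without interior $(d+1)$-simplices as projections $\pi(S_{0})$ of slices (via Theorem~\ref{thm:interior} and Proposition~\ref{prop:slice->triang}), observe that a slice mutation changes exactly one vertex and hence induces a bistellar flip by \cite[Theorem 4.1]{ot}, and conclude by connectivity of slices under mutation (the paper cites \cite[Theorem 5.27]{io} for this, rather than the $d$-APR-tilting statement you invoke). The ``crux'' you flag---that an \emph{arbitrary} slice projects to a triangulation with no interior $(d+1)$-simplices, which is what keeps the intermediate triangulations inside the class---is exactly the step the paper itself leaves implicit in the words ``the result follows from Theorem~\ref{thm:interior}'', so your attempt matches the paper's own argument and is, if anything, more explicit about where the remaining bookkeeping lies.
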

\begin{proof}
Slice mutation involves replacing one vertex of a slice $S$ with another to obtain a new slice $S'$. Hence, if one considers the triangulations $\pi(S_{0})$ and $\pi(S'_{0})$, these have all but one $d$-arc in common. Since two triangulations are related by a bistellar flip if and only if they have all but one $d$-arc in common \cite[Theorem 4.1]{ot}, it follows that triangulations related by slice mutation are related by a bistellar flip. Iyama and Oppermann then show that all slices are connected by slice mutation \cite[Theorem 5.27]{io}. Hence this implies that the class of  triangulations without interior $(d + 1)$-simplices is connected by bistellar flips, and so the result follows from Theorem~\ref{thm:interior}.
\end{proof}

\section{A combinatorial criterion for mutation}\label{sect:mut}

Given a triangulation $\mathcal{T}$ of $C(n + 2d + 1,2d)$, we say that \emph{a $d$-arc $A$ of $\mathcal{T}$ is mutable} if there is a bistellar flip $\mathcal{T}'$ of $\mathcal{T}$ such that $\darcs(\mathcal{T}) \setminus \{A\} = \darcs(\mathcal{T}')\setminus \{B\}$. It is clear that here $A$ and $B$ must intertwine. For $d = 1$, where triangulations of $C(n + 2d + 1, 2d)$ are triangulations of convex $m$-gons, all $d$-arcs are mutable. But this is not true for $d > 1$. In this section, we prove a criterion for identifying the mutable $d$-arcs of a triangulation $\mathcal{T}$ from its quiver $Q(\mathcal{T})$. This then leads us to a rule for mutating cut quivers at vertices which are neither sinks nor sources.

We begin with some motivating observations concerning cuts. We explain how a cut quiver may be decomposed into distinguished cut cycles, and observe that an arc of the triangulation is mutable if and only if it does not occur in the middle of a distinguished cut cycle. This will follow from the main result of the subsequent section.

It is clear that the arrows of each $(d + 1)$-cycle in $Q^{(d, n)}$ must be labelled exactly once by each element of $\{f_{0}, f_{1}, \dots, f_{d}\}$. We call a $(d + 1)$-cycle of $Q^{(d, n)}$ \emph{distinguished} if the arrows are labelled in the cyclic order $f_{d}, f_{d - 1}, \dots, f_{0}$. Given a cut $C$, the distinguished cut $(d + 1)$-cycles of $Q_{C}$ are the paths that result from removing the arrows of $C$ from the distinguished cycles of $\qdn$. 

\begin{observation}\label{obs:cut_mut}
Given a triangulation $\mathcal{T}$ whose quiver $Q(\mathcal{T})$ is a cut of $Q^{(d, n)}$, the mutable $d$-arcs of $\mathcal{T}$ are precisely the $d$-arcs which do not lie in the middle of a distinguished cut $(d + 1)$-cycle.
\end{observation}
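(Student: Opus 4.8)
The plan is to deduce the Observation from Theorem~\ref{thm:retro}, as signalled in the text. Since $Q(\mathcal{T})$ is a cut of $Q^{(d,n)}$, Theorem~\ref{thm:interior} guarantees that $\mathcal{T}$ has no interior $(d+1)$-simplices, so by Lemma~\ref{lem:single-step-arrows} every arrow of $Q(\mathcal{T})$ is a single step $A \to A + E_{i}$ and carries a well-defined type $i \in \{0, 1, \dots, d\}$. Granting this, the whole statement reduces to one identification: I would show that the maximal retrograde paths of $Q(\mathcal{T})$ coincide with the distinguished cut $(d+1)$-cycles of $Q_{C}$. Once this is established, a $d$-arc lies in the middle of a maximal retrograde path if and only if it lies in the middle of a distinguished cut cycle, and Theorem~\ref{thm:retro} yields the Observation at once.

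First I would record the combinatorics of distinguished cycles. Each arrow of $Q^{(d,n)}$ that lies in a full distinguished cycle belongs to exactly one such cycle, the one in which it appears as the arrow of its own type; along that cycle the arrows occur with types $d, d-1, \dots, 1, 0$ in cyclic order, so consecutive arrows differ in type by $-1$ modulo $d+1$, the wrap from type $0$ to type $d$ being effected by the special arrow $f_{d} = (1, 0, \dots, 0, -1)$. I would then check that this descending-type pattern is exactly what the definition of a retrograde path records, so that deleting the single cut arrow from a distinguished cycle leaves a path of $d$ arrows that is a retrograde path, and that this path is \emph{maximal}: the arrows removed at its two ends are cut arrows, and they are the unique candidates for extending the path past its source or sink. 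Conversely, two consecutive arrows of a retrograde path form an incoming arrow of type $t$ and an outgoing arrow of type $t-1$ at a common vertex, and therefore lie in the same distinguished cycle; since an arrow lies in only one distinguished cycle, an entire maximal retrograde path lies in a single distinguished cycle and hence equals the associated distinguished cut cycle. Because the source and sink of the path agree under both descriptions, so do the $d-1$ interior vertices, which is what ``middle'' means.

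The main obstacle I anticipate is the behaviour at the boundary of $Q^{(d,n)}$, where a vertex has some coordinate equal to $0$ and not every type of arrow is present. There a consecutive descending pair of arrows can survive the cut even though the distinguished cycle carrying it is incomplete and so is not a distinguished cut cycle at all; the danger is that this would create a ``middle'' vertex unaccounted for by distinguished cut cycles. To rule this out I would use the full force of the cut condition---that $C$ meets \emph{every} $(d+1)$-cycle of $Q^{(d, n)}$, not only the distinguished ones---together with the slice description of $Q(\mathcal{T})$ and the convexity supplied by Corollary~\ref{cor:switch+con->conv}, to show that no maximal retrograde path of length at least two can run along an incomplete distinguished cycle. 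Verifying this boundary case, and checking that the remaining boundary arrows form length-one retrograde paths with no middle, is where the real work of the proof lies.
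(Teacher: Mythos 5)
You have set the proof up exactly as the paper does: reduce to Theorem~\ref{thm:retro}, and identify the maximal retrograde paths of $Q(\mathcal{T})$ with the distinguished cut $(d+1)$-cycles. The direction you actually prove---that deleting the cut arrow from a distinguished cycle leaves a retrograde path which is maximal, because the only arrows that could extend it at either end are precisely the cut arrows---is the content of the remark following Definition~\ref{def:retro}, and together with Theorem~\ref{thm:retro} it gives one implication of the Observation.

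The converse implication---that a vertex in the middle of a maximal retrograde path is in the middle of a distinguished cut cycle---is where your proposal stops, and this is a genuine gap rather than a boundary technicality: it is the entire content of the Observation beyond Theorem~\ref{thm:retro}. You correctly identify the danger (a descending pair of arrows surviving the cut while the distinguished cycle through them is incomplete), but the main tool you offer, ``the full force of the cut condition'', cannot close it. Indeed, one can exhibit a subset $C$ of $Q_1^{(3,3)}$ meeting every $4$-cycle of $Q^{(3,3)}$ exactly once such that the vertex $(1,0,1,0)$ of $Q_C^{(3,3)}$ retains an incoming arrow of type $1$ and an outgoing arrow of type $0$, even though $Q^{(3,3)}$ has no distinguished $4$-cycles at all: a distinguished cycle through $V$ forces every coordinate of $V$ but one to be positive, which is impossible when the coordinates sum to $n-1=2$. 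So the statement you need is false for abstract cuts; it is rescued only because such cuts do not arise from triangulations, and any proof must therefore use the slice realisation rather than the cut axioms. Concretely, one lifts the pair $Z \to A \to B$ (with $A = Z + E_j$, $B = A + E_{j-1}$) to the slice $S$, checks that the backward candidates $G_m = Z - E_{j+1} - \dots - E_{j+m}$ and the forward candidates $G_m + \onevec$ (indices modulo $d+1$, computed in the lift) are valid vertices of $\tqdn$ lying in a single $\nu_d$-orbit, and then uses convexity of $S$ together with the fact that $S$ meets each $\nu_d$-orbit exactly once to show that the representative of that orbit in $S$ is either $G_m$ or $G_m + \onevec$, and that these choices are monotone in $m$; the chosen representatives then assemble into a descending path of length $d$ through $Z \to A \to B$, which is a distinguished cut cycle. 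Nothing of this sort appears in your write-up, so as it stands the proposal does not prove the Observation. To be fair, the paper itself is equally terse at exactly this point; but the work you defer is precisely the work that needs doing.
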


\begin{example}
The reader can check that in the left-hand triangulation in Figure~\ref{fig:mut_via_cyc} the mutable 2-arcs are 135, 146, and 157, whilst in the right-hand triangulation the mutable 2-arcs are 246, 136, and 157. In these figures we draw each distinguished cut $3$-cycle in a different colour.
\end{example}

\begin{figure}
\caption{Mutability via distinguished cut $(d + 1)$-cycles}\label{fig:mut_via_cyc}
\[
\begin{tikzpicture}

\begin{scope}[shift={(-3,0)}]

\node(135) at (0,0) {$135$};
\node(136) at (1,1) {$136$};
\node(137) at (2,2) {$137$};
\node(146) at (2,0) {$146$};
\node(147) at (3,1) {$147$};
\node(157) at (4,0) {$157$};

\draw[red,->] (135) -- (136);
\draw[blue,->] (136) -- (137);
\draw[blue,->] (137) -- (147);
\draw[green,->] (147) -- (157);
\draw[red,->] (136) -- (146);
\draw[green,->] (146) -- (147);

\end{scope}

\begin{scope}[shift={(3,0)}]

\node(246) at (0,0) {$246$};
\node(136) at (1,1) {$136$};
\node(137) at (2,2) {$137$};
\node(146) at (2,0) {$146$};
\node(147) at (3,1) {$147$};
\node(157) at (4,0) {$157$};

\draw[red,->] (146) -- (246);
\draw[blue,->] (136) -- (137);
\draw[blue,->] (137) -- (147);
\draw[green,->] (147) -- (157);
\draw[red,->] (136) -- (146);
\draw[green,->] (146) -- (147);

\end{scope}

\end{tikzpicture}
\]
\end{figure}
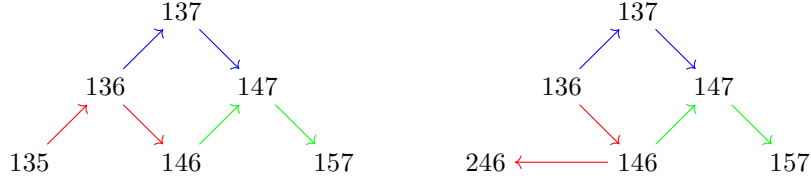

\subsection{General triangulations}\label{sect:mut:gen}

Cut quivers have a very particular form and it is this that allows us to determine the distinguished cut $(d + 1)$-cycles of the quiver, and then to use these to determine the mutable $d$-arcs of the triangulation. In general, quivers may be much more complicated than cut quivers. Nevertheless, we may generalise Observation~\ref{obs:cut_mut} to arbitrary triangulations of even-dimensional cyclic polytopes using the following notion.

\begin{definition}\label{def:retro}
Let $A$ be a $d$-arc of $\mathcal{T}$. A path \[A \rightarrow A + \rvec \rightarrow A + \rvec + \sv{i - 1}\] in $Q(\mathcal{T})$ is \emph{retrograde at $A + \rvec$} if $a_{i - 1} < a_{i - 1} + s < a_{i}$.

A path \[A_{0} \rightarrow A_{1} \rightarrow \dots \rightarrow A_{l} \rightarrow A_{l+1}\] is \emph{retrograde} if $A_{i-1} \rightarrow A_{i} \rightarrow A_{i+1}$ is retrograde at $A_{i}$ for all $i \in [l]$. For $i \in [l]$, we say that $A_{i}$ is \emph{in the middle} of this retrograde path. We consider paths consisting of a single arrow to be trivially retrograde. We say that a retrograde path is \emph{maximal} if it is not contained in any longer retrograde paths.
\end{definition}

\begin{remark}
It is hence clear that the distinguished cut $(d + 1)$-cycles in a cut quiver are maximal retrograde paths. Arrows in this cut cycle are labelled $f_{l + 1}, \dots, f_{d}, f_{0}, \dots, f_{l - 1}$ for some $l$. The arrows in $Q(\mathcal{T})$ labelled by $f_{i}$ are of the form $A \to A + E_{i}$, and we always have that $a_{i} + 1 < a_{i + 1}$. Hence, we obtain that these paths are retrograde. These paths are, futhermore, maximally retrograde, since the next arrow in the path at either end would have to be labelled by $f_{l}$, but this is precisely the arrow that has been cut out.
\end{remark}

\begin{lemma}
Every arrow in $Q(\mathcal{T})$ is contained in a unique maximal retrograde path.
\end{lemma}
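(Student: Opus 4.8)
We need to show that every arrow in $Q(\mathcal{T})$ lies in a unique maximal retrograde path. The statement decomposes into two assertions: existence (every arrow lies in some maximal retrograde path) and uniqueness (no arrow lies in two distinct maximal retrograde paths). The key structural fact driving this is the notion of being \emph{retrograde at} a vertex, which is a local condition on consecutive pairs of arrows sharing a vertex.

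**The plan.** The essential observation is that the retrograde condition constrains how arrows can be extended on either side of a given vertex. My plan is to show that an arrow can be extended \emph{at most one way} on each side to form a longer retrograde path, i.e., that the maximal retrograde path through any arrow is forced. Concretely, suppose $\alpha\colon A \to A + rE_i$ is an arrow of $Q(\mathcal{T})$. I would first analyze the possible \emph{continuations}: an arrow $\beta$ with $t(\beta) = h(\alpha) = A + rE_i$ such that $\alpha\beta$ is retrograde at $A + rE_i$. By Definition~\ref{def:retro}, such a $\beta$ must be of the form $A + rE_i \to A + rE_i + sE_{i-1}$ with $a_{i-1} < a_{i-1} + s < a_i$. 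I claim that if such a continuation exists, it is unique, and similarly for \emph{predecessors} (arrows $\gamma$ with $h(\gamma) = t(\alpha) = A$ such that $\gamma\alpha$ is retrograde at $A$).

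**The crux: local uniqueness of continuation.** The main work is establishing that the retrograde continuation at each end is unique when it exists. For the continuation at $A + rE_i$, the candidate arrows $\beta$ all have type $i-1$ and, by Corollary~\ref{cor:quiv_desc}, an arrow $A + rE_i \to A + rE_i + sE_{i-1}$ belongs to $Q(\mathcal{T})$ only when there is no $s' \in [s-1]$ with $A + rE_i + s'E_{i-1} \in Q_0(\mathcal{T})$. This minimality built into the quiver's arrows forces $s$ to be determined: there is at most one value of $s$ for which $A + rE_i + sE_{i-1}$ is a $d$-arc with no intervening $d$-arc. The same minimality argument, applied to type-$i$ arrows ending at $A$, pins down the unique predecessor. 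Thus the retrograde path is like a doubly-infinite (but finite, since the polytope is finite) ``thread'' threaded uniquely through $\alpha$: extend left as far as possible, extend right as far as possible. Since each extension step is forced and the simplices strictly increase (so no step repeats and the process terminates), this produces a well-defined maximal retrograde path containing $\alpha$.

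**Assembling existence and uniqueness.** Existence then follows: starting from $\alpha$, greedily extend in both directions until no retrograde continuation exists; the result is a maximal retrograde path (maximal precisely because extension has been exhausted at both ends). Uniqueness follows from the forced nature of each step: if $\alpha$ lay in two maximal retrograde paths $P$ and $P'$, then comparing them arrow-by-arrow outward from $\alpha$, the uniqueness of continuation and predecessor at each stage forces $P$ and $P'$ to agree at every step, hence $P = P'$. The step I expect to be the main obstacle is the local uniqueness claim, specifically verifying that the retrograde condition together with the minimality constraint in Corollary~\ref{cor:quiv_desc} genuinely admits at most one continuing arrow; one must rule out the possibility of two distinct type-$(i-1)$ arrows out of $A + rE_i$ both satisfying the retrograde inequality $a_{i-1} < a_{i-1} + s < a_i$, which requires checking that the ``no intervening $d$-arc'' condition is incompatible with two such choices coexisting. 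I would handle this by noting that two such arrows would have different step sizes $s < s'$, but then $A + rE_i + sE_{i-1}$ is an intervening $d$-arc on the way to $A + rE_i + s'E_{i-1}$, contradicting the minimality in the definition of the arrows of $Q(\mathcal{T})$.
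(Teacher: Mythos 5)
Your proposal is correct and takes essentially the same route as the paper's proof: the retrograde condition forces any continuation of $A \to A + rE_{i}$ to be the (at most one) type-$(i-1)$ arrow out of $A + rE_{i}$ and any predecessor to be the (at most one) type-$(i+1)$ arrow into $A$, with uniqueness in each case coming from exactly the minimality clause of Corollary~\ref{cor:quiv_desc} that you spell out, so the maximal retrograde path through a given arrow is forced in both directions. One small slip: the predecessor of $A \to A + rE_{i}$ in a retrograde path is a type-$(i+1)$ arrow ending at $A$, not a type-$i$ arrow as you wrote, but your minimality argument applies verbatim after this correction.
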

\begin{proof}
If an arrow follows $A \to A + \rvec$, then must be of the form $A + \rvec \to A + \rvec + \sv{j}$ with $j = i - 1$ and if an arrow precedes it, then it must be of the form $A - \sv{j} \to A$ with $j = i + 1$. Both such arrows must be unique, by Corollary~\ref{cor:quiv_desc}. Hence, there is only one way to extend an arrow to a maximal retrograde path.
\end{proof}

\begin{proposition}
The maximal length of a retrograde path in $Q(\mathcal{T})$ is $d$.
\end{proposition}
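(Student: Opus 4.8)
The plan is to prove the upper bound by contradiction: I assume a retrograde path of length $d+1$ exists and show that its endpoints must intertwine, which is impossible for two $d$-arcs of the same triangulation. Since any retrograde path of length $>d+1$ contains its first $d+1$ arrows as a retrograde subpath, it suffices to rule out length exactly $d+1$. Attainment of the value $d$ then follows from the distinguished cut $(d+1)$-cycles discussed above, which are maximal retrograde paths with exactly $d$ arrows.

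First I would record the cyclic structure of the types. As established in the proof of the previous lemma, if an arrow of type $i$ is followed by another arrow in a retrograde path, the next arrow has type $i-1$; the types therefore decrease by one (modulo $d+1$) at each step. A retrograde path of length $d+1$ thus uses each of the types $0,1,\dots,d$ exactly once, and these do not wrap around. After re-orienting the polytope by a suitable cyclically shifted order $<_{l}$, I may assume the arrows have types $d, d-1, \dots, 1, 0$ in this order, so the path reads
\[
A_{0} \xrightarrow{\,d\,} A_{1} \xrightarrow{d-1} A_{2} \to \dots \xrightarrow{\,0\,} A_{d+1}.
\]
Writing $A_{0} = (a_{0}, \dots, a_{d})$ and lifting to $\tilde{Q}^{(d,n)}$, each arrow increments exactly one coordinate, and since the types are distinct each coordinate is incremented exactly once; hence $A_{d+1} = (a_{0} + s_{0}, \dots, a_{d} + s_{d})$ for some integers $s_{0}, \dots, s_{d} \geqslant 1$. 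Unwinding Definition~\ref{def:retro} at each interior vertex $A_{k}$ then gives, after this relabelling, the inequalities $a_{j} + s_{j} < a_{j+1}$ for all $j \in \{0, 1, \dots, d-1\}$, so that
\[
a_{0} < a_{0} + s_{0} < a_{1} < a_{1} + s_{1} < \dots < a_{d} < a_{d} + s_{d},
\]
placing each $a_{j} + s_{j}$ strictly between $a_{j}$ and $a_{j+1}$ for $j < d$.

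The one remaining value to control is $a_{d} + s_{d}$, which is where the cyclic wrap-around could in principle fail. Here I would use that the first vertex $A_{1} = A_{0} + s_{d}\ivec$ (with $i = d$) is itself a $d$-arc, hence lies in $\nonconsec{n + 2d + 1}{d}$; its defining inequality for the top entry gives $(a_{d} + s_{d}) + 2 \leqslant a_{0} + n + 2d + 1$, so that $a_{d} + s_{d} < a_{0} + n + 2d + 1$. Thus $a_{d} + s_{d}$, reduced modulo $n + 2d + 1$, lies strictly in the cyclic arc running from $a_{d}$ round to $a_{0}$. Combining this with the chain above, the $2d+2$ values $a_{0}, a_{0}+s_{0}, a_{1}, a_{1}+s_{1}, \dots, a_{d}, a_{d}+s_{d}$ form a cyclic ordering in which the entries of $A_{0}$ and of $A_{d+1}$ alternate; that is, $A_{0} \swr A_{d+1}$. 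But $A_{0}$ and $A_{d+1}$ are both $d$-arcs of $\mathcal{T}$, whose $d$-arcs form a non-intertwining collection, a contradiction. Hence no retrograde path exceeds length $d$.

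The genuinely delicate step is the last one: showing that the alternating sequence is a bona fide cyclic ordering rather than merely a chain of pairwise betweenness relations. Everything up to the inequality $a_{j} + s_{j} < a_{j+1}$ is a direct reading of the retrograde conditions, but the placement of $a_{d} + s_{d}$ requires invoking that the intermediate vertex $A_{1}$ is an actual $d$-arc (an element of $\nonconsec{n + 2d + 1}{d}$), not just a vertex of the lift $\tilde{Q}^{(d,n)}$; it is precisely this membership that bounds the wrap and forces the intertwining. With the bound in hand, the contradiction is immediate, and the attainment of length $d$ by the distinguished cut $(d+1)$-cycles completes the proof that the maximal length is $d$.
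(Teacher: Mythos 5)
Your proof is correct and takes essentially the same route as the paper's: assume a retrograde path of length $d+1$, read off from the retrograde conditions that each new entry sits strictly between consecutive entries of the starting $d$-arc, and conclude that the two endpoints intertwine, contradicting the fact that the $d$-arcs of a triangulation form a non-intertwining collection. Your treatment is somewhat more careful than the paper's (the explicit handling of the cyclic wrap-around via $A_{1} \in \nonconsec{n+2d+1}{d}$, and the attainment remark), but the core argument is identical.
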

\begin{proof}
Suppose for contradiction that we have a retrograde path in $Q(\mathcal{T})$ of length $(d+1)$. By re-ordering, we can represent this in the form \[(a_{0}, a_{1}, \dots, a_{d}) \rightarrow (a_{0}, a_{1}, \dots, a_{d - 1}, b_{d})\rightarrow \dots \rightarrow (a_{0}, b_{1}, b_{2}, \dots, b_{d}) \rightarrow (b_{0}, \dots, b_{d}).\] Since this path is retrograde, we have $a_{i} < b_{i} < a_{i + 1}$ for all $i \in [d]$. But this implies that $(a_{0}, a_{1}, \dots, a_{d})$ and $(b_{0}, b_{1}, \dots, b_{d})$ are intertwining.
\end{proof}

A $d$-arc $A$ is mutable precisely if there exists a $d$-arc $B$ which intertwines with it but which does not intertwine with any other $d$-arc in the triangulation. If such a $d$-arc $B$ does not exist, then $A$ is not mutable. Hence, we consider the collection of $d$-arcs of a triangulation which intertwine with a given $d$-arc outside the triangulation.

\begin{lemma}\label{lem:connected}
Let $B \in \nonconsec{m}{d} \setminus \darcs{\mathcal{T}}$. Let $Q_{B}(\mathcal{T})$ be the full subquiver of $Q(\mathcal{T})$ with vertex set \[Q_{B}(\mathcal{T})_{0} = \set{ A \in \darcs{\mathcal{T}} \st A \swr B }.\] Then $Q_{B}(\mathcal{T})$ is connected.
\end{lemma}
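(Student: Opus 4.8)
The plan is to exploit the very rigid combinatorial shape of the set of $d$-arcs intertwining a fixed $B$. First I would fix a cyclic order $<_{l}$ adapted to $B$, so that $B = (b_{0}, \dots, b_{d})$ and the condition $A \swr B$ becomes the requirement that $a_{i}$ lie in the $i$-th \emph{slot} $I_{i}$, the cyclic interval strictly between $b_{i-1}$ and $b_{i}$. Since consecutive slots are separated by one of the $b_{j}$, the non-consecutiveness constraint defining $\nonconsec{n+2d+1}{d}$ is automatic, so the arcs intertwining $B$ are exactly the points of the unconstrained product grid $I_{0} \times \dots \times I_{d}$. The key structural observation I would record is that $Q_{B}(\mathcal{T})_{0}$ is a \emph{strong antichain} in this grid: if $A, A' \in \darcs{\mathcal{T}}$ both intertwine $B$ and satisfy $a_{i} < a_{i}'$ in every coordinate, then $a_{i} < a_{i}' < a_{i+1}$ forces $A \swr A'$, contradicting that $\mathcal{T}$ is non-intertwining. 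Combined with Corollary~\ref{cor:quiv_desc}, which says every arrow changes a single coordinate, this means the arrows of $Q_{B}(\mathcal{T})$ join arcs sharing all but one entry and moving within a single slot.

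Next I would prove a single-slot connectivity statement: if $A, A' \in Q_{B}(\mathcal{T})_{0}$ agree in every coordinate except one, they are connected in $Q_{B}(\mathcal{T})$. Indeed, listing the arcs of $\mathcal{T}$ that agree with $A$ off that coordinate and whose remaining entry lies between $a_{i}$ and $a_{i}'$, Corollary~\ref{cor:quiv_desc} gives an arrow between each consecutive pair, and every arc in the list lies in the slot $I_{i}$, hence intertwines $B$ and lies in $Q_{B}(\mathcal{T})$. More generally, any directed path in $Q(\mathcal{T})$ between two $\leqslant$-comparable vertices of $Q_{B}(\mathcal{T})$ stays inside $Q_{B}(\mathcal{T})$, because arrows only raise coordinates and the intermediate values remain trapped in the (interval) slots; together with Lemma~\ref{lem:find_path} this shows that $A$ is connected to any $C \in Q_{B}(\mathcal{T})_{0}$ with $(A - \onevec) \swr C$.

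With these tools in hand, the main step is to connect two arbitrary vertices $A, A'$. The plan is to induct on the number of coordinates in which they differ, the base case being single-slot connectivity. The decisive local configuration is when $A$ and $A'$ differ oppositely in two coordinates $i, j$ (say $a_{i} < a_{i}'$ and $a_{j} > a_{j}'$): then their meet $M = A \wedge A'$ and join $J = A \vee A'$ each lie in the grid and each differ from both $A$ and $A'$ in a single coordinate, so if either $M$ or $J$ belongs to $\mathcal{T}$ then $A$ and $A'$ are connected through it by single-slot connectivity. I would then reduce the general inductive step to repeatedly applying this, peeling off one inversion at a time.

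The hard part will be the triangulation fact underpinning the local step: that whenever $A, A' \in \darcs{\mathcal{T}}$ are as above, at least one of $M, J$ must belong to $\mathcal{T}$. Note that $\{A, A', M\}$ and $\{A, A', J\}$ are each non-intertwining while $M$ and $J$ intertwine and so form a circuit, so this is precisely the statement that the complete triangulation $\mathcal{T}$ must realise one side of this circuit; I expect to prove it using the maximality of $\mathcal{T}$ as a non-intertwining family together with Lemma~\ref{lem:find_2d_simplex} and Lemma~\ref{lem:space} to exhibit the required arc as a face of a $2d$-simplex of $\mathcal{T}$. A secondary difficulty is the case where $A$ and $A'$ are $\leqslant$-comparable but differ in several coordinates (forced to share at least one entry by the antichain property); here I would introduce an auxiliary coordinate to manufacture an inversion and again route through a meet or join, so that the induction still terminates.
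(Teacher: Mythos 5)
Your preliminary reductions (the slot grid, the strong antichain property, single-slot connectivity) are all correct, but the step you yourself flag as the hard part is genuinely false, and the argument collapses there. It is not true that if $A, A' \in \darcs{\mathcal{T}}$ both intertwine $B$ and differ oppositely in two coordinates, then the meet $M$ or the join $J$ must lie in $\mathcal{T}$. Counterexample with $d = 1$: in $C(10,2)$ take the triangulation $\mathcal{T}$ with diagonals $18, 19, 28, 27, 26, 36, 35$ and let $B = 49 \notin \darcs{\mathcal{T}}$. Then $A = 18$ and $A' = 36$ both intertwine $B$ and differ oppositely ($1 < 3$ and $8 > 6$), but neither the meet $16$ nor the join $38$ is in $\mathcal{T}$; instead $A$ and $A'$ are connected in $Q_{B}(\mathcal{T})$ through the other grid points of the box they span, namely $18 - 28 - 27 - 26 - 36$. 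The logical flaw is the appeal to circuits and maximality: a triangulation contains at most one side of any circuit, but it need not contain \emph{either} side, so maximality of $\mathcal{T}$ as a non-intertwining family cannot manufacture $M$ or $J$. (Worse, for $d \geqslant 2$, if $A$ and $A'$ differ in exactly two coordinates then $M$ and $J$ share $d - 1$ entries and hence do not intertwine at all, so they do not even form a circuit.) Your fallback for the $\leqslant$-comparable case, ``introduce an auxiliary coordinate to manufacture an inversion,'' routes through the same false claim.

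It is instructive to compare with how the paper threads this needle. The meet-type construction is used only \emph{inside a single $2d$-simplex} $T$ of $\mathcal{T}$: if $A$ and $A'$ are $d$-faces of the same $T$ intertwining $B$, then the tuple $C$ built from coordinatewise minima is again a face of $T$, hence automatically a $d$-arc of $\mathcal{T}$, and $(C - \onevec) \swr A$, $(C - \onevec) \swr A'$ give paths $C \leadsto A$ and $C \leadsto A'$ staying in $Q_{B}(\mathcal{T})$. The situation of the counterexample above, where $A$ and $A'$ lie in different $2d$-simplices, is handled by a separate geometric argument: choosing $x \in |T| \cap |B|$ and $x' \in |T'| \cap |B|$, the segment $\seg{x}{x'}$ lies in the convex set $|B|$ and passes through a chain of $2d$-simplices of $\mathcal{T}$ whose successive common facets meet $|B|$, and the circuit description of $C(n + 2d + 1, 2d)$ yields a $d$-arc intertwining $B$ inside each such common facet, gluing the within-simplex pieces together. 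Your proposal contains no analogue of this gluing step; any repair would have to supply one, or else prove a true statement about which vertices of the grid box spanned by $A$ and $A'$ are forced to lie in $\mathcal{T}$ -- and the example shows these need not include the meet or the join.
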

\begin{proof}
Let $\mathcal{T}_{B}$ be the collection of $2d$-simplices of $\mathcal{T}$ which have a $d$-face intertwining with the $d$-arc $B$. Let $T \in \mathcal{T}_{B}$. We first show that the set of $d$-faces of $T$ is connected in $Q_{B}(\mathcal{T})$. Hence, let $A, A'$ be two $d$-faces of $T$ which intertwine with $B$. Then $A$ and $A'$ must have a common vertex, since they are both faces of the same $2d$-simplex, so, by re-ordering, we can assume $a_{0}=a'_{0}$. We know that $A$ and $B$ must be intertwining, so we may also assume that \[a_{0} < b_{0} < a_{1} < b_{1} < \dots < b_{d-1} < a_{d} < b_{d}.\] Since $a_{0}=a'_{0}$ and $A'$ also intersects $B$, we also have that \[a'_{0} < b_{0} < a'_{1} < b_{1} < \dots < b_{d-1} < a'_{d} < b_{d}.\] Let $c_{i}, c'_{i} \in \{a'_{i}, a_{i}\}$ be such that $b_{i-1} < c_{i} < c'_{i} < b_{i}$. Then $C, C' \in \mathcal{T}$ since they are both $d$-faces of $T$. Moreover, they are both in $Q_{B}(\mathcal{T})$. There is a path $C \leadsto A$ in $Q_{B}(\mathcal{T})$ due to Definition~\ref{def:quiver} since, by construction, $(C - \onevec) \swr A$. There is likewise a path $C \leadsto A'$ in $Q_{B}(\mathcal{T})$. Therefore, $A$ and $A'$ are connected to each other in $Q_{B}(\mathcal{T})$. Hence, any two $d$-arcs lying in a common $2d$-simplex are connected by a walk in $Q_{B}(\mathcal{T})$.

We now show that the $d$-arcs in $Q_{B}(\mathcal{T})$ which lie in different $2d$-simplices are connected with each other. Let $T, T' \in \mathcal{T}_{B}$. If one chooses points $x \in |T| \cap |B|$ and $x' \in |T'| \cap |B|$, then the line segment $\seg{x}{x'}$ connecting $x$ and $x'$ must lie entirely within $|B|$, since $|B|$ is convex. If one travels from $|T|$ to $|T'|$ along $\seg{x}{x'}$, then one runs through a series of $2d$-simplices $|T| = |T_{0}|, |T_{1}|, \dots, |T_{r}| = |T'|$ where each pair of $2d$-simplices $|T_{l - 1}|$ and $|T_{l}|$ shares a common face $|U|$ which must also intersect $|B|$. Then, by the description of the circuits of $C(n + 2d + 1, 2d)$, there must be a $d$-arc $J_{l}$ within $U_{l}$ such that $B \swr J_{l}$. Therefore, $Q_{B}(\mathcal{T})|_{T_{l - 1}}$ and $Q_{B}(\mathcal{T})|_{T_{l}}$ are connected to each other at the $d$-arc $J_{l}$, where these respectively denote the full subquivers of $Q_{B}(\mathcal{T})$ consisting of the $d$-arcs lying in $T_{l - 1}$ and $T_{l}$, respectively. Moreover, this means that $Q_{B}(\mathcal{T})|_{T}$ and $Q_{B}(\mathcal{T})|_{T'}$ are connected to each other in $Q_{B}(\mathcal{T})$. Hence $Q_{B}(\mathcal{T})$ itself is connected.
\end{proof}

\begin{remark}
Lemma~\ref{lem:connected} may also be seen quickly using an algebraic argument. \cite[Theorem 5.6]{ot} implies that $Q_{B}(\mathcal{T})$ must be the support of an indecomposable module, and so must be connected.
\end{remark}

This gives the following useful corollary, which implies that in order to check whether one can mutate a $d$-arc $A$ to a $d$-arc $B$, it suffices only to check whether the $d$-arcs adjacent to $A$ in the quiver intertwine with $B$, rather than checking all $d$-arcs for whether they intertwine with $B$.

\begin{corollary}\label{cor:adj_suff}
Let $A \in \mathcal{T}$ and $B \in \nonconsec{m}{d}$ with $A \swr B$. If there is an $A' \in \darcs{\mathcal{T}}$ with $A' \neq A$ and $A' \swr B$, then there is an $A'' \in \darcs{\mathcal{T}}$ with $A'' \swr B$, such that $A$ and $A''$ are adjacent in $Q(\mathcal{T})$.
\end{corollary}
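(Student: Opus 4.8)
The plan is to obtain this as an immediate consequence of the connectivity established in Lemma~\ref{lem:connected}. First I would observe that both $A$ and $A'$ are vertices of the full subquiver $Q_{B}(\mathcal{T})$: by hypothesis $A \swr B$ and $A' \swr B$, so both lie in $Q_{B}(\mathcal{T})_{0} = \set{ X \in \darcs{\mathcal{T}} \st X \swr B }$. Since $B \in \nonconsec{m}{d} \setminus \darcs{\mathcal{T}}$ (as $B \swr A$ forces $B$ to intertwine with a $d$-arc of $\mathcal{T}$, hence $B$ is not itself a $d$-arc of $\mathcal{T}$), Lemma~\ref{lem:connected} applies and tells us that $Q_{B}(\mathcal{T})$ is connected.

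Given connectivity, I would then extract the desired adjacent arc as follows. Because $A$ and $A'$ are distinct vertices of the connected quiver $Q_{B}(\mathcal{T})$, there is a nontrivial walk $A \walk A'$ in $Q_{B}(\mathcal{T})$. This walk begins with a single arrow incident at $A$; let $A''$ denote the other endpoint of that first arrow. Then $A''$ is a vertex of $Q_{B}(\mathcal{T})$, so $A'' \swr B$, and $A$ is joined to $A''$ by an arrow of $Q_{B}(\mathcal{T})$. Since $Q_{B}(\mathcal{T})$ is a \emph{full} subquiver of $Q(\mathcal{T})$, this arrow is in fact an arrow of $Q(\mathcal{T})$, so $A$ and $A''$ are adjacent in $Q(\mathcal{T})$, as required.

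There is no substantial obstacle here: the corollary is essentially a repackaging of the connectivity already proved. The only point requiring a moment's care is the degenerate case of a length-zero walk, which is excluded precisely by the hypothesis $A' \neq A$, guaranteeing that the walk from $A$ contains at least one arrow and hence produces a genuine neighbour $A''$ of $A$.
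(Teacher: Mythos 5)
Your proposal is correct and follows essentially the same route as the paper: both deduce the claim directly from the connectivity of $Q_{B}(\mathcal{T})$ established in Lemma~\ref{lem:connected}, extracting $A''$ as a neighbour of $A$ in that connected subquiver. Your explicit verification that $B \notin \darcs{\mathcal{T}}$ (since $B \swr A$ and the $d$-arcs of a triangulation are non-intertwining) is a point the paper's proof leaves tacit, but it is a detail, not a different argument.
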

\begin{proof}
Suppose that we are in the situation described. We know from Lemma~\ref{lem:connected} that $Q_{B}(\mathcal{T})$ is connected, and the set-up gives us that it contains at least two vertices, one of which is $A$. Hence there is a vertex of $Q_{B}(\mathcal{T})$ which is adjacent to~$A$.
\end{proof}

We can now prove the main theorem of this section.

\begin{theorem}\label{thm:retro}
Let $\mathcal{T}$ be a triangulation of $C(n + 2d + 1, 2d)$. Then a $d$-arc of $\mathcal{T}$ is mutable if and only if it is not in the middle of a maximal retrograde path in $Q(\mathcal{T})$.
\end{theorem}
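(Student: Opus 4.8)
The plan is to work throughout with the reformulation of mutability recorded just before Lemma~\ref{lem:connected}: a $d$-arc $A$ is mutable if and only if there is some $B \in \nonconsec{m}{d}$ with $A \swr B$ that intertwines with no other $d$-arc of $\mathcal{T}$. By Corollary~\ref{cor:adj_suff} this global condition is local: $B$ intertwines with no other $d$-arc precisely when it intertwines with no $d$-arc adjacent to $A$ in $Q(\mathcal{T})$. So everything reduces to understanding, for a prospective $B$ written $a_{0} < b_{0} < a_{1} < b_{1} < \dots < a_{d} < b_{d}$, exactly when a neighbour of $A$ intertwines with $B$. I would first record the elementary computation that underpins both directions. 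Since an outgoing arrow $A \to A + t_{k}E_{k}$ and an incoming arrow $A - u_{k}E_{k} \to A$ (each unique of its type by Corollary~\ref{cor:quiv_desc}) differ from $A$ in a single coordinate, one checks directly that $A + t_{k}E_{k} \swr B$ if and only if $b_{k} > a_{k} + t_{k}$, while $A - u_{k}E_{k} \swr B$ if and only if $b_{k-1} < a_{k} - u_{k}$. Hence avoiding every neighbour of $A$ is equivalent to a box constraint $L_{j} \leqslant b_{j} \leqslant U_{j}$ for each $j$, where $L_{j} = \max(a_{j} + 1,\, a_{j+1} - u_{j+1})$ and $U_{j} = \min(a_{j+1} - 1,\, a_{j} + t_{j})$, the $u$- and $t$-terms appearing only when the corresponding arrow exists.

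The first and easier direction is that a $d$-arc in the middle of a maximal retrograde path is not mutable. I would first note that being in the middle of a maximal retrograde path is equivalent to the existence of a retrograde triple at $A$, that is, an incoming arrow $P = A - rE_{i} \to A$ of type $i$ and an outgoing arrow $A \to N = A + sE_{i-1}$ of type $i-1$ with $a_{i-1} + s < a_{i} - r$. Given any $B$ with $A \swr B$, the coordinate $b_{i-1}$ lies strictly between $a_{i-1}$ and $a_{i}$; by the computation above, if $b_{i-1} \leqslant a_{i-1} + s$ then $b_{i-1} < a_{i} - r$ and so $P \swr B$, whereas if $b_{i-1} > a_{i-1} + s$ then $N \swr B$. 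The retrograde inequality $a_{i-1} + s < a_{i} - r$ makes these two cases exhaustive, so every such $B$ intertwines with a second $d$-arc and $A$ is not mutable.

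For the converse I would take $A$ not in the middle of any maximal retrograde path, equivalently admitting no retrograde triple, and build a witness $B$ by choosing $b_{j} \in [L_{j}, U_{j}]$ independently for each $j$. The crucial point is that each box is non-empty: the only way to force $L_{j} > U_{j}$ is $a_{j+1} - u_{j+1} > a_{j} + t_{j}$ with both the incoming type-$(j+1)$ and outgoing type-$j$ arrows present, the remaining potential failures being excluded by $a_{j+1} \geqslant a_{j} + 2$ (as $A \in \nonconsec{m}{d}$) together with $t_{j}, u_{j+1} \geqslant 1$. But under the index shift $i = j+1$ this inequality is exactly the retrograde condition for a triple at $A$ with incoming type $j+1$ and outgoing type $j$, which is ruled out by hypothesis. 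One then checks that the resulting $B$ satisfies $A \swr B$ and lies in $\nonconsec{m}{d}$ automatically: the bounds $b_{j} \leqslant a_{j+1} - 1$ and $b_{j+1} \geqslant a_{j+1} + 1$ give $b_{j+1} - b_{j} \geqslant 2$, and the cyclic gap between $b_{d}$ and $b_{0}$ is handled identically. As $B$ avoids every neighbour of $A$, Corollary~\ref{cor:adj_suff} shows it intertwines with no other $d$-arc, so $A$ is mutable.

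The main obstacle I anticipate is not any single inequality but keeping the bookkeeping exact: ensuring there is at most one incoming and one outgoing arrow of each type so the boxes are well defined, stating the intertwining criteria with the correct strict versus non-strict inequalities at the boundary values $b_{j} = a_{j} + t_{j}$ and $b_{j-1} = a_{k} - u_{k}$, and—above all—matching the single algebraic inequality $a_{j} + t_{j} < a_{j+1} - u_{j+1}$ precisely to the retrograde condition under the shift $i = j+1$. Getting this identification exactly right is what makes both directions hinge on the same computation, and is where I would concentrate the care.
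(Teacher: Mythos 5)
Your proposal is correct and follows essentially the same route as the paper's proof: reduce to the adjacent arcs via Corollary~\ref{cor:adj_suff}, translate non-intertwining with each neighbour $A \pm r E_{i}$ into coordinatewise interval constraints on the prospective replacement arc, and observe that the resulting box is non-empty exactly when no path through $A$ is retrograde (your inequality $a_{j} + t_{j} < a_{j+1} - u_{j+1}$ is precisely the paper's condition $z_{i+1} > b_{i}$ under the same index shift). The only cosmetic difference is that you argue the two directions separately and check membership in $\nonconsec{m}{d}$ explicitly, whereas the paper runs a single chain of equivalences with default values $z_{i+1} = a_{i}+1$, $b_{i} = a_{i+1}-1$ for missing arrows.
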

\begin{proof}
For each entry $a_{i}$ in $A$, let $Z_{i} = (a_{0}, a_{1}, \dots, a_{i-1}, z_{i}, a_{i+1}, a_{i + 2}, \dots, a_{d})$ be the $d$-arc of this form in $\mathcal{T}$ such that there is an arrow $Z_{i} \rightarrow A$, if it exists. Similarly, let $B_{i} = (a_{0}, a_{1}, \dots, a_{i-1}, b_{i}, a_{i + 1}, a_{i + 2}, \dots, a_{d})$ be the $d$-arc of this form in $\mathcal{T}$ such that there is an arrow $A \rightarrow B_{i}$, if it exists.

The $d$-arc $A$ is mutable if and only if there exists $C \in \nonconsec{m}{d}$ such that $A \swr C$ but such that there is no $A' \in \mathcal{T}$ with $A' \neq A$ with $A' \swr C$. By Corollary~\ref{cor:adj_suff}, it is necessary and sufficient that we do not have $Z_{i} \swr C$ or $B_{i} \swr C$ for any $i \in \{0 , 1, \dots, d \}$. If $A \swr C$, then, since \[a_{0} < c_{0} < a_{1} < c_{1} < \dots < a_{d} < c_{d},\] we have that $Z_{i + 1}$ and $C$ do not intertwine if and only if $z_{i + 1} \leqslant c_{i}$. Similarly, $B_{i}$ and $C$ do not intertwine if and only if $c_{i} \leqslant b_{i}$.

Let $z_{i + 1} = a_{i} + 1$ if $Z_{i + 1}$ does not exist and let $b_{i} = a_{i + 1} - 1$ if $B_{i}$ does not exist. The rationale for this is that these are respectively the minimal and maximal values for $c_{i}$. Then, by the above reasoning, $A$ is mutable precisely if there exists \[C \in [z_{1}, b_{0}] \times [z_{2}, b_{1}] \times \dots \times [z_{d}, b_{d - 1}] \times [z_{0},b_{d}] .\] So $A$ is mutable if and only if this product is non-empty. But the product is non-empty if and only if $z_{i + 1} \leqslant b_{i}$ for all $i$, which is precisely the condition that none of the paths $Z_{i + 1} \rightarrow A \rightarrow B_{i}$ are retrograde.
\end{proof}

\begin{corollary}\label{cor:mut_rule}
Let $\mathcal{T}$ be a triangulation of $C(n + 2d + 1, 2d)$ and $A \in \darcs{\mathcal{T}}$ such that $A$ is not in the middle of any maximal retrograde paths. Let $Z_{i}$ and $B_{i}$ be as in the proof of Theorem~\ref{thm:retro}. Then $z_{i + 1} = b_{i}$ for all $i$ and $(b_{0}, b_{1}, \dots, b_{d})$ replaces $A$ in the bistellar flip at $A$.
\end{corollary}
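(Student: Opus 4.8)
Recall from the proof of Theorem~\ref{thm:retro} that, with the fallback conventions $z_{i+1} = a_i + 1$ and $b_i = a_{i+1} - 1$ adopted there, the $d$-arc $A$ is mutable precisely when $z_{i+1} \leqslant b_i$ for all $i$, and that in this case every mutation target lies in the product $[z_1, b_0] \times \dots \times [z_d, b_{d-1}] \times [z_0, b_d]$. The hypothesis that $A$ is not in the middle of any maximal retrograde path is exactly the condition $z_{i+1} \leqslant b_i$ for all $i$. Thus it suffices to prove the reverse inequalities $z_{i+1} \geqslant b_i$: these force $z_{i+1} = b_i$ for every $i$, collapse the product to the single point $B = (b_0, \dots, b_d)$, and hence identify $B$ as the $d$-arc replacing $A$ in the bistellar flip.

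To establish $z_{i+1} \geqslant b_i$, the plan is to argue gap by gap using a suitable $2d$-simplex. Fix $i$; after passing to a cyclically shifted order $<_l$ we may assume the gap $(a_i, a_{i+1})$ is non-wrap, which is legitimate because the values $z_{i+1}$ and $b_i$ are determined by $\mathcal{T}$ and the geometric gap and so are independent of the order used to index coordinates. By Lemma~\ref{lem:find_2d_simplex} choose a $2d$-simplex $T$ of $\mathcal{T}$ with $e(T) = A$ and write $T = (a_0, x_1, a_1, \dots, x_d, a_d)$, so that $a_i < x_{i+1} < a_{i+1}$. Consider the two $d$-faces $F = (a_0, \dots, a_{i-1}, x_{i+1}, a_{i+1}, \dots, a_d)$ and $G = (a_0, \dots, a_i, x_{i+1}, a_{i+2}, \dots, a_d)$ of $T$, both of which are $d$-simplices of $\mathcal{T}$. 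When $F \in \nonconsec{m}{d}$---which holds exactly when $x_{i+1} \leqslant a_{i+1} - 2$---it exhibits a $d$-arc obtained from $A$ by raising coordinate $i$ to $x_{i+1}$, whence $b_i \leqslant x_{i+1}$; similarly, when $G \in \nonconsec{m}{d}$, that is, when $x_{i+1} \geqslant a_i + 2$, it gives $z_{i+1} \geqslant x_{i+1}$.

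The remaining boundary positions of $x_{i+1}$ in the gap are handled by the universal bounds $b_i \leqslant a_{i+1} - 1$ and $z_{i+1} \geqslant a_i + 1$, which are immediate from the definitions together with the fallback conventions: if $x_{i+1} = a_{i+1} - 1$ then $b_i \leqslant a_{i+1} - 1 = x_{i+1}$, and if $x_{i+1} = a_i + 1$ then $z_{i+1} \geqslant a_i + 1 = x_{i+1}$. Combining these with the two face arguments gives $b_i \leqslant x_{i+1} \leqslant z_{i+1}$ in all cases, and in particular $b_i \leqslant z_{i+1}$. With the inequality $z_{i+1} \leqslant b_i$ from the hypothesis this yields $z_{i+1} = b_i$, as required, and the final sentence of the first paragraph then delivers the description of the flip.

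The main obstacle is the bookkeeping in the middle of the argument: one must pin down exactly when the faces $F$ and $G$ fail to lie in $\nonconsec{m}{d}$---namely when an entry becomes consecutive with a neighbour---and confirm that precisely those failures are the endpoint positions of $x_{i+1}$ that are absorbed by the universal bounds, so that no position of $x_{i+1}$ in the gap is left uncovered. A secondary subtlety is the reduction to a non-wrap gap: one should check carefully that $z_{i+1}$ and $b_i$ are genuinely insensitive to the cyclic reordering, so that the argument applies verbatim to the wrap-around gap $(a_d, a_0)$ as well.
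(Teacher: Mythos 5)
Your proposal is correct, but it takes a genuinely different route from the paper. The paper's own proof is three lines: from the proof of Theorem~\ref{thm:retro}, every element of the product $[z_{1}, b_{0}] \times \dots \times [z_{d}, b_{d-1}] \times [z_{0}, b_{d}]$ is a valid replacement for $A$, and since the $d$-arc replacing $A$ in a bistellar flip is unique, the product must be a singleton, which forces $z_{i+1} = b_{i}$ for all $i$. You avoid the uniqueness assertion entirely: you prove the reverse inequalities $z_{i+1} \geqslant b_{i}$ directly and unconditionally, by taking a $2d$-simplex $T$ with $e(T) = A$ via Lemma~\ref{lem:find_2d_simplex}, writing $T = (a_{0}, x_{1}, a_{1}, \dots, x_{d}, a_{d})$, and using the two $d$-faces of $T$ obtained by swapping $x_{i+1}$ for $a_{i}$ or for $a_{i+1}$ to pin down $b_{i} \leqslant x_{i+1} \leqslant z_{i+1}$. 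I checked your case analysis: $F$ fails to lie in $\nonconsec{n+2d+1}{d}$ exactly when $x_{i+1} = a_{i+1} - 1$, and $G$ fails exactly when $x_{i+1} = a_{i} + 1$ (the remaining gap conditions are inherited from $A$), and these positions are indeed absorbed by the universal bounds $b_{i} \leqslant a_{i+1} - 1$ and $z_{i+1} \geqslant a_{i} + 1$, including the degenerate gap $a_{i+1} = a_{i} + 2$ where both faces fail; the reduction of the wrap-around gap to a non-wrap one by re-ordering is also legitimate, since Definition~\ref{def:quiver} makes no reference to an ordering, so the arcs $Z_{i+1}$ and $B_{i}$ (and hence their entries) are order-independent. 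Your argument is very much in the spirit of the paper's own Lemma~\ref{lem:space} and Proposition~\ref{prop:all_but_one_vertex}. As for what each approach buys: the paper's proof is far shorter, but it leans on a uniqueness fact about bistellar flips that is asserted rather than proved in the paper; yours is self-contained modulo the quoted lemmas, and actually establishes a stronger statement --- the inequality $b_{i} \leqslant z_{i+1}$ holds for \emph{every} $d$-arc of \emph{every} triangulation, mutable or not, so each interval $[z_{i+1}, b_{i}]$ is either a single point or empty, and mutability of $A$ is equivalent to $z_{i+1} = b_{i}$ for all $i$.
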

\begin{proof}
We know from the proof of Theorem~\ref{thm:retro} that any element of \[[z_{1}, b_{0}] \times [z_{2}, b_{1}] \times \dots \times [z_{d}, b_{d - 1}] \times [z_{0},b_{d}]\] may replace $A$ in a bistellar flip. But, we have that the $d$-arc which can replace $A$ in a bistellar flip must be unique. Hence $z_{i + 1} = b_{i}$ for all $i$ and the unique element of the product must replace $A$ in the bistellar flip.
\end{proof}

\begin{example}\label{ex:comb_crit}
We provide examples of how one may use this criterion to identify the mutable $d$-arcs of a triangulation. We represent maximal retrograde paths using consecutive arrows of the same colour.

Considering the triangulation of $C(8,4)$ given in Figure~\ref{fig:cyc_triang}, the mutable $2$-arcs are 136, 147, and 357.

\begin{figure}
\caption{Triangulation of $C(8,4)$}\label{fig:cyc_triang}
\[
\begin{tikzpicture}[scale=1.3]

\node(135) at (0,0) {$135$};
\node(136) at (1,1) {$136$};
\node(137) at (2,2) {$137$};
\node(146) at (2,0) {$357$};
\node(147) at (3,1) {$147$};
\node(157) at (4,0) {$157$};

\draw[red,->] (135) -- (136);
\draw[blue,->] (136) -- (137);
\draw[blue,->] (137) -- (147);
\draw[green,->] (147) -- (157);
\draw[red,->] (146) -- (135);
\draw[green,->] (157) -- (146);

\end{tikzpicture}
\]
\end{figure}
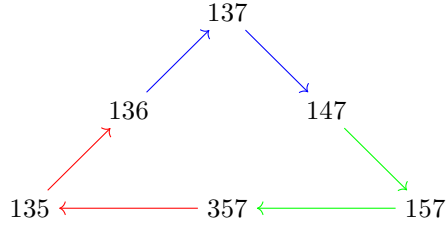

Note that maximal retrograde paths are not always of length $d$. But this is not the case. This is shown by the triangulation of $C(10,6)$ given in Figure~\ref{fig:one_short_triang}. (We use `A' to denote 10.) The mutable $d$-arcs of this triangulation are 357A, 1368, 1479.

\begin{figure}
\caption{Triangulation of $C(10,6)$}\label{fig:one_short_triang}
\[
\begin{tikzpicture}[scale=1.3,xscale=1.2]

\node(1359) at (0,0) {$1359$};
\node(1358) at (-1,-1) {$1358$};
\node(1369) at (1,-1) {$1369$};
\node(1357) at (-2,-2) {$1357$};
\node(1368) at (0,-2) {$1368$};
\node(1379) at (2,-2) {$1379$};
\node(357A) at (-2,-3) {$357A$};
\node(1479) at (2,-3) {$1479$};
\node(3579) at (-1,-4) {$3579$};
\node(1579) at (1,-4) {$1579$};

\draw[->,blue] (1358) -- (1359);
\draw[->,blue] (1359) -- (1369);
\draw[->,red] (357A) -- (1357);
\draw[->,red] (1357) -- (1358);
\draw[->,red] (1358) -- (1368);
\draw[->,green] (1368) -- (1369);
\draw[->,green] (1369) -- (1379);
\draw[->,green] (1379) -- (1479);
\draw[->,gray] (1479) -- (1579);
\draw[->,gray] (1579) -- (3579);
\draw[->,gray] (3579) -- (357A);

\end{tikzpicture}
\]
\end{figure}

One can also illustrate Corollary~\ref{cor:mut_rule}. Consider the $d$-arc 1368 in Figure~\ref{fig:one_short_triang}. This is mutable by Theorem~\ref{thm:retro}, so we can compute what $d$-arc it is exchanged for. Between 1 and 3 we must have 2 and between 6 and 8 we must have 7. Then, between 3 and 6 we must have 5 since 1368 is adjacent to 1358. Similarly, between 8 and 1 we must have 9, because 1368 is adjacent to 1369. Hence performing a bistellar flip at 1368 exchanges this $d$-arc for 2579. Observe that the retrograde-path analysis makes it easier to compute the bistellar flips of the triangulation. In the resulting triangulation, shown in Figure~\ref{fig:all_short_triang}, none of the retrograde paths are of length $d$; they are all of length $d - 1$.

\begin{figure}
\caption{Triangulation of $C(10,6)$}\label{fig:all_short_triang}
\[
\begin{tikzpicture}[scale=1.3,xscale=1.2]

\node(1359) at (0,0) {$1359$};
\node(1369) at (1,1) {$1369$};
\node(1358) at (-1,1) {$1358$};
\node(1357) at (-2,0) {$1357$};
\node(1379) at (2,0) {$1379$};
\node(357A) at (-2,-1) {$357A$};
\node(2579) at (0,-1) {$2579$};
\node(1479) at (2,-1) {$1479$};
\node(3579) at (-1,-2) {$3579$};
\node(1579) at (1,-2) {$1579$};

\draw[->,red] (357A) -- (1357);
\draw[->,red] (1357) -- (1358);
\draw[->,blue] (1358) -- (1359);
\draw[->,blue] (1359) -- (1369);
\draw[->,purple] (1369) -- (1379);
\draw[->,purple] (1379) -- (1479);
\draw[->,green] (1479) -- (1579);
\draw[->,green] (1579) -- (2579);
\draw[->,gray] (2579) -- (3579);
\draw[->,gray] (3579) -- (357A);

\end{tikzpicture}
\]
\end{figure}
\end{example}

\subsection{Mutating cut quivers}

This a rule for mutating cut quivers at sinks and sources \cite{io}, as described in Section~\ref{sect:back:cut_slice:mut}. In this section, we extend this rule to allow mutation at vertices which are not in the middle of retrograde paths, but which are not necessarily sinks or sources. In the case where the cut quiver is $Q(\mathcal{T})$ for a triangulation $\mathcal{T}$, we also describe the effect of the mutation on the triangulation $\mathcal{T}$.

For the following lemmas, we let $C$ be a cut of $\qdn$. The purpose of these lemmas is to describe the local structure of a cut quiver around a vertex which is not in the middle of a distinguished cut $(d + 1)$-cycle. We then use our knowledge of this local structure to describe the effect of mutation at that vertex.

\begin{lemma}\label{lem:adj_cyc}
If a vertex $x$ of $\qdn_{C}$ is the source of one distinguished cut $(d + 1)$-cycle and the sink of another distinguished cut $(d + 1)$-cycle, then the arrows cut out of the two cycles are of the same type.
\end{lemma}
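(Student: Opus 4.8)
The plan is to reduce the statement to an equality of arrow-types and then contradict the defining property of a cut, namely that $C$ meets every $(d+1)$-cycle of $\qdn$ in exactly one arrow. First I would translate the hypotheses into the local picture at $x$. Since a distinguished $(d+1)$-cycle carries its arrows in the cyclic order $f_{d}, f_{d-1}, \dots, f_{0}$, deleting one arrow leaves a Hamiltonian path whose source is the head of the deleted arrow and whose sink is its tail. Writing $D_{1}$ and $D_{2}$ for the two full distinguished cycles giving rise to the two distinguished cut cycles, the hypothesis that $x$ is the source of the first forces the arrow of $C$ removed from $D_{1}$ to be the unique arrow of $D_{1}$ entering $x$, say $\alpha_{1}\colon x - f_{l_{1}} \to x$ of type $l_{1}$; dually, $x$ being the sink of the second forces the arrow of $C$ removed from $D_{2}$ to be the unique arrow of $D_{2}$ leaving $x$, say $\alpha_{2}\colon x \to x + f_{l_{2}}$ of type $l_{2}$. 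Both $\alpha_{1}, \alpha_{2} \in C$, and the claim is exactly that $l_{1} = l_{2}$.

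Suppose for contradiction that $l_{1} \neq l_{2}$. The strategy is to exhibit a single $(d+1)$-cycle $Z$ of $\qdn$ containing both $\alpha_{1}$ and $\alpha_{2}$: then $Z$ contains two distinct arrows of $C$, contradicting that $C$ meets $Z$ exactly once. (This is exactly the case to isolate, since when $l_{1}=l_{2}$ no such cycle can exist, a $(d+1)$-cycle using each type once.) Concretely, $Z$ should run $x - f_{l_{1}} \xrightarrow{\alpha_{1}} x \xrightarrow{\alpha_{2}} x + f_{l_{2}} \leadsto x - f_{l_{1}}$, where the final stretch applies each remaining type in $\{0,\dots,d\} \setminus \{l_{1}, l_{2}\}$ exactly once. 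The total displacement automatically matches because $\sum_{i} f_{i} = 0$, and any closed directed walk using each type once is automatically a genuine $(d+1)$-cycle (the only sub-collections of the $f_{i}$ summing to zero are the empty one and the whole set, so no vertex is repeated).

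The heart of the proof, and the step I expect to be the main obstacle, is producing this final stretch as an order of the remaining types keeping every intermediate vertex in $\qdn_{0}$. Here I would exploit that $D_{2}$ is itself a valid distinguished cycle: its deleted-arrow path $P_{2} = D_{2} \setminus \alpha_{2}$ is a valid path $x + f_{l_{2}} \leadsto x$ applying the types $\{0,\dots,d\}\setminus\{l_{2}\}$ in cyclic-decreasing order. As $l_{1} \neq l_{2}$, the type-$l_{1}$ step occurs somewhere along $P_{2}$; I would delete exactly that step and translate the remainder of the path by $-f_{l_{1}}$, which lands at $x - f_{l_{1}}$ and uses precisely the types $\{0,\dots,d\}\setminus\{l_{1}, l_{2}\}$. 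The only coordinate decreased by this translation is coordinate $l_{1}+1$, and the only types reading that coordinate are $f_{l_{1}}$ (deleted) and $f_{l_{1}+1}$ (which, when it occurs in $P_{2}$, precedes the $f_{l_{1}}$-step because the arrows of $P_{2}$ descend cyclically); hence no step after the deletion reads coordinate $l_{1}+1$ and the translated path stays valid. Splicing $\alpha_{1}$ and $\alpha_{2}$ onto this path yields the cycle $Z$ and the contradiction, so $l_{1} = l_{2}$, as required.
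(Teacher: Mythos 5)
Your proof is correct and takes essentially the same route as the paper: both derive a contradiction with the defining property of a cut by placing the two cut arrows $\alpha_{1}$ (entering $x$) and $\alpha_{2}$ (leaving $x$) on a single $(d+1)$-cycle of $\qdn$, which then contains two arrows of $C$. The only difference is that the paper asserts in one line that two consecutive arrows of different types form part of a $(d+1)$-cycle, whereas you explicitly construct that cycle (by deleting the type-$l_{1}$ step from the second distinguished cycle and translating the remainder by $-f_{l_{1}}$), thereby verifying the boundary condition that all intermediate vertices remain in $Q_{0}^{(d,n)}$ --- a detail the paper leaves implicit.
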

\begin{proof}
If the arrows cut out are of different types, then they form two consecutive arrows a $(d + 1)$-cycle, which therefore has two arrows cut out. But this is a contradiction, since a cut removes precisely one arrow from each $(d + 1)$-cycle.
\end{proof}

\begin{lemma}\label{lem:onesrc_onesnk}
If a vertex $x$ of $\qdn_{C}$ is neither a source nor a sink, nor in the middle of a distinguished cut $(d + 1)$-cycle, then it is the head of precisely one arrow and the tail of precisely one arrow.
\end{lemma}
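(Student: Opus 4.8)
The plan is to read off the in- and out-degree of $x$ from the decomposition of $\qdn_{C}$ into distinguished cut $(d+1)$-cycles, using Lemma~\ref{lem:adj_cyc} to control the types of the cut-out arrows. First I would record that every arrow of $\qdn$ lies in a unique distinguished $(d+1)$-cycle: starting from a given arrow, requiring the arrow-types to appear in the distinguished cyclic order forces each successive arrow uniquely, and the forced arrow always exists, since the token it moves is exactly the one supplied by the previous arrow of the cycle; after $d+1$ steps the walk closes up. Hence the non-cut arrows of $\qdn_{C}$ partition into distinguished cut cycles, each a path of length $d$, and $x$ occurs in any such path only as its source, as an interior vertex, or as its sink.

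Next I would translate the hypotheses on $x$. Since $x$ is not in the middle of a distinguished cut cycle, every arrow of $\qdn_{C}$ leaving $x$ is the first arrow of a cut cycle with source $x$, and every arrow entering $x$ is the last arrow of a cut cycle with sink $x$. Therefore the number of arrows with tail $x$ equals the number of distinguished cut cycles having $x$ as source, and the number with head $x$ equals the number having $x$ as sink. As $x$ is neither a source nor a sink of $\qdn_{C}$, both counts are at least one, and it remains to show that each count is exactly one.

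This is where Lemma~\ref{lem:adj_cyc} enters. Because $x$ is at once the source of some distinguished cut cycle and the sink of another, the lemma forces the arrows cut out of these two cycles to have a common type; applying it to all relevant pairs shows that there is a single type $t$ such that the cut-out arrow of every source cut cycle and of every sink cut cycle at $x$ has type $t$. Now the arrow cut out of a cut cycle with source $x$ is the arrow of the ambient $(d+1)$-cycle pointing into $x$, namely the arrow $x - f_{t} \to x$ of type $t$; this arrow is unique and lies in a unique distinguished $(d+1)$-cycle, so there is exactly one cut cycle with source $x$, giving that $x$ is the tail of precisely one arrow. Dually, the arrow cut out of a cut cycle with sink $x$ is the unique type-$t$ arrow $x \to x + f_{t}$ leaving $x$, so $x$ is the head of precisely one arrow.

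The step I expect to be the main obstacle is the final identification: one must pin down, for a cut cycle through $x$, exactly which arrow of the underlying $(d+1)$-cycle is deleted (the arrow into $x$ when $x$ is the source, the arrow out of $x$ when $x$ is the sink) and then turn the ``same type'' conclusion of Lemma~\ref{lem:adj_cyc} into genuine uniqueness by using that an arrow of a fixed type incident to $x$ is unique and determines its distinguished cycle. Some care is also needed to keep the cyclic type conventions defining ``distinguished'' consistent throughout, so that the forced-continuation argument and the source/sink bookkeeping align.
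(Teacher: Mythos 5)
Your proof is correct and takes essentially the same approach as the paper: the engine in both is Lemma~\ref{lem:adj_cyc}, which forces all arrows cut out at $x$ to share a single type $t$, combined with the uniqueness of the type-$t$ arrow into (resp.\ out of) $x$ and of the distinguished $(d+1)$-cycle containing it. The only differences are organizational: the paper argues arrow-by-arrow (any second arrow $\gamma$ incident at $x$ is shown to coincide with the first), whereas you count distinguished cut cycles with source or sink $x$, and you make explicit the fact---used only implicitly in the paper's proof---that every arrow of $\qdn_{C}$ lies in a unique distinguished cut $(d+1)$-cycle.
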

\begin{proof}
Suppose that $x$ is neither a source nor a sink, nor in the middle of a distinguished cut $(d + 1)$-cycle. Then $x$ is the head of at least one arrow $\alpha$ and the tail of at least one arrow $\beta$. Since $x$ is not in the middle of a distinguished cut $(d + 1)$-cycle, the arrows $\alpha'$ and $\beta'$ succeeding $\alpha$ and preceding $\beta$ in their respective distinguished $(d + 1)$-cycles must be cut out. By Lemma~\ref{lem:adj_cyc}, $\alpha'$ and $\beta'$ have the same type.

Suppose that we have another arrow $\gamma$ such that $x$ is the head of $\gamma$. Then the arrow $\gamma'$ which succeeds it in the distinguished $(d + 1)$-cycle must be cut out. But then by Lemma~\ref{lem:adj_cyc}, $\gamma'$, $\alpha'$, and $\beta'$ all have the same type, so $\gamma = \alpha$. A similar argument can be made for an arrow $\delta$ with tail $x$.
\end{proof}

\begin{lemma}\label{lem:vert_form}
If a vertex $A$ is the head of precisely one arrow $\alpha_{d}$ and the tail of precisely one arrow $\beta_{1}$, and not in the middle of a distinguished cut $(d + 1)$-cycle, then there is an $a_{i}$ such that for $j \notin \{i, i - 1\}$, $a_{j} + 2 = a_{j + 1}$.
\end{lemma}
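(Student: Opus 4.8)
The plan is to realise the cut quiver $\qdn_{C}$ concretely as the quiver $Q(\mathcal{T})$ of a triangulation $\mathcal{T}$ of $C(n+2d+1,2d)$ with no interior $(d+1)$-simplices, which is legitimate by Theorem~\ref{thm:interior} and the slice correspondence. The vertex $A$ then becomes a genuine $d$-arc of $\mathcal{T}$, so that Lemma~\ref{lem:space} and Lemma~\ref{lem:single-step-arrows} become available; in particular every arrow at $A$ is single-step. Write $\alpha_{d}\colon A - E_{p} \to A$ for the unique incoming arrow, of some type $p$, and $\beta_{1}\colon A \to A + E_{q}$ for the unique outgoing arrow, of some type $q$. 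Since $A$ is not in the middle of any distinguished cut $(d+1)$-cycle yet carries both an incoming and an outgoing arrow, it must sit at the sink of the distinguished cut cycle $\mathcal{C}_{1}$ containing $\alpha_{d}$ and at the source of the distinguished cut cycle $\mathcal{C}_{2}$ containing $\beta_{1}$. These cycles are distinct, since otherwise $A$ would be both the source and the sink of a single nontrivial retrograde path, which is impossible.

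First I would pin down the two types $p$ and $q$. As $A$ is simultaneously the source of $\mathcal{C}_{2}$ and the sink of $\mathcal{C}_{1}$, Lemma~\ref{lem:adj_cyc} tells us that the arrows cut out of $\mathcal{C}_{1}$ and of $\mathcal{C}_{2}$ have a common type, say $l$. Now each distinguished cut cycle is a maximal retrograde path (as recorded in the remark following Definition~\ref{def:retro}) whose arrow types run through every residue modulo $d+1$ except $l$, decreasing by one at each step; hence the first arrow, emanating from the source, has type $l-1$, while the last arrow, entering the sink, has type $l+1$. Applying this to $\mathcal{C}_{2}$ and to $\mathcal{C}_{1}$ respectively yields $q = l-1$ and $p = l+1$. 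In particular $q = p - 2$, which is consistent with $A$ not lying in the middle of a cut cycle, and the gap $p-1 = l$ and the gap $q = l-1$ are precisely the two gaps flanking the entry $a_{l}$.

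To finish I would feed this back into Lemma~\ref{lem:space}. Suppose some gap is non-minimal, say $a_{m} + 2 < a_{m+1}$. Then Lemma~\ref{lem:space} produces either an arrow of type $m+1$ into $A$ or an arrow of type $m$ out of $A$. By the uniqueness of the incoming and outgoing arrows, the first possibility forces $m+1 = p = l+1$, i.e.\ $m = l$, while the second forces $m = q = l-1$; in either case $m \in \{l-1, l\}$. Consequently $a_{j} + 2 = a_{j+1}$ for every index $j \notin \{l-1, l\}$, which is exactly the assertion of the lemma with $i = l$.

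The crux is the middle step: converting the purely combinatorial hypothesis that $A$ is not in the middle of a cut cycle into the sharp statement $p = l+1$, $q = l-1$ about the types of the two arrows incident to $A$. This hinges on correctly locating $A$ as the sink of $\mathcal{C}_{1}$ and the source of $\mathcal{C}_{2}$, on invoking Lemma~\ref{lem:adj_cyc} to force a single shared cut type $l$, and on reading off the first and last arrow types of a distinguished cut cycle. Once these types are secured, Lemma~\ref{lem:space} closes the argument almost mechanically; the only remaining point to check carefully is that gaps $l-1$ and $l$ are indeed the two gaps adjacent to $a_{l}$, so that the index $i = l$ genuinely witnesses the claim.
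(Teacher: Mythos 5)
Your proof is correct and takes essentially the same route as the paper's: you place $A$ as the sink of one distinguished cut $(d+1)$-cycle and the source of another, use Lemma~\ref{lem:adj_cyc} to force a common cut type $l$, read off that the unique incoming and outgoing arrows have types $l+1$ and $l-1$, and conclude with Lemma~\ref{lem:space}. The only difference is that you spell out two points the paper leaves implicit (realising the cut quiver as $Q(\mathcal{T})$ so that Lemma~\ref{lem:space} is applicable, and the contrapositive type-matching at the end), which only adds rigour.
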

\begin{proof}
The arrow $\alpha_{d}$ must be the final arrow in a distinguished cut $(d + 1)$-cycle $\alpha_{1}\alpha_{2}\dots \alpha_{d}$. The arrow $\beta_{0}$ must the first arrow in a distinguished cut $(d + 1)$-cycle $\beta_{1}\beta_{2}\dots \beta_{d}$. Let $\alpha_{0}$ and $\beta_{0}$ be the respective arrows cut out of these $(d + 1)$-cycles.

We know from Lemma~\ref{lem:adj_cyc} that $\alpha_{0}$ are $\beta_{0}$ are of the same type. We let this type be $i$. Then $\beta_{1}$ has type $i - 1$. The result then follows from Lemma~\ref{lem:space}.
\end{proof}

With these lemmas in place, we can now describe the effect of mutation on a cut quiver at a vertex which is not in the middle of a distinguished cut $(d + 1)$-cycle, but is not a sink or a source. Since the description of mutation at sinks and sources is covered in \cite{io}, as described in Section~\ref{sect:back:cut_slice:mut}, and we know that mutation at vertices in the middle of distinguished cut $(d + 1)$-cycles is not possible by Theorem~\ref{thm:retro}, this completes the description of mutation of cut quivers.

\begin{proposition}\label{prop:comp_cut_mut}
Let $\mathcal{T}$ be a triangulation of $C(n + 2d + 1, 2d)$ such that $Q(\mathcal{T})$ is isomorphic to $\qdn_{C}$ for a cut $C$. Let $A$ be a vertex of $Q(\mathcal{T})$ which is neither a sink nor a source, but is still not in the middle of any retrograde paths. Let $\mathcal{T}'$ be the result of performing a bistellar flip at $A$ in $\mathcal{T}$. Then we have the following.
\begin{enumerate}
\item In the bistellar flip $A$ is replaced by $(a_{0} + 1, a_{1} + 1, \dots, a_{i - 1} + 1, a_{i + 1} - 1, a_{i + 1} + 1, \dots, a_{d} + 1)$, where $i$ is the type of the arrow cut out of the distinguished cut $(d + 1)$-cycles at $A$.\label{op:flip_result}
\item $Q(\mathcal{T}')$ is obtained from $\mathcal{T}$ by removing from $C$ the arrows beginning or ending at $A$ and adding the arrows of $Q_{1}(\mathcal{T})\setminus C$ which begin or end at $A$.\label{op:quiv_mut}
\end{enumerate}
\end{proposition}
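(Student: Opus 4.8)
The plan is to treat the two assertions in turn, exploiting the local description of a cut quiver around $A$ furnished by Lemmas~\ref{lem:adj_cyc}, \ref{lem:onesrc_onesnk} and~\ref{lem:vert_form}, together with the explicit bistellar flip computed in Corollary~\ref{cor:mut_rule}. For part~\ref{op:flip_result}, I would start from Lemma~\ref{lem:onesrc_onesnk}, which says that $A$ is the head of exactly one arrow and the tail of exactly one arrow; reading off their types from the proof of Lemma~\ref{lem:vert_form}, where $i$ is the common type of the arrows cut out of the two distinguished cut $(d+1)$-cycles meeting $A$, shows that, in the notation of the proof of Theorem~\ref{thm:retro}, precisely one of the arcs $Z_{k}$ and precisely one of the arcs $B_{k}$ actually occur. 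Since $\mathcal{T}$ has no interior $(d+1)$-simplices, these two arrows are single steps (Lemma~\ref{lem:single-step-arrows}), so the surviving $Z$ and $B$ pin down one coordinate of the replacement arc, while every other coordinate takes its default boundary value $z_{k+1}=a_{k}+1$ or $b_{k}=a_{k+1}-1$. Substituting these into the identity $c_{k}=z_{k+1}=b_{k}$ of Corollary~\ref{cor:mut_rule}, and using the fact from Lemma~\ref{lem:vert_form} that all remaining gaps equal $2$ to confirm that the two descriptions agree, yields exactly the arc $A'$ displayed in part~\ref{op:flip_result}.

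For part~\ref{op:quiv_mut} I would first show that the flip changes the quiver only locally. Writing $\mathcal{T}'=(\darcs{\mathcal{T}}\setminus\{A\})\cup\{A'\}$ with $A'$ the arc just computed, I would use Corollary~\ref{cor:quiv_desc} together with Lemma~\ref{lem:single-step-arrows} to check that an arrow of $Q(\mathcal{T})$ between two arcs both different from $A$ remains an arrow of $Q(\mathcal{T}')$, and conversely: a single-step arrow can neither acquire nor lose an intermediate arc when only $A$ is exchanged for $A'$. Hence $Q(\mathcal{T})$ and $Q(\mathcal{T}')$ agree away from $A$ and $A'$, and the entire content of part~\ref{op:quiv_mut} reduces to comparing the arrows incident to $A$ in $Q(\mathcal{T})$ with the arrows incident to $A'$ in $Q(\mathcal{T}')$.

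I would then compute the arrows incident to $A'$ directly from Corollary~\ref{cor:quiv_desc} using the explicit coordinates of $A'$, and compare them with the single in-arrow and out-arrow at $A$ determined above. The aim is to confirm that deleting the arrows incident to $A$ and inserting the arrows incident to $A'$ has the net effect of removing from $C$ the cut arrows lying at $A$ and adjoining the two arrows of $Q(\mathcal{T})$ at $A$, that is, produces exactly the set $C'$ described in the statement; checking that $C'$ is again a cut then amounts to verifying, cycle by cycle, that every $(d+1)$-cycle of $\qdn$ through $A$ still meets $C'$ in a single arrow, which I would extract from the source/sink description of $A$ in Lemmas~\ref{lem:adj_cyc} and~\ref{lem:onesrc_onesnk}. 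Once this is done, the identification $Q(\mathcal{T}')\cong\qdn_{C'}$ follows.

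The main obstacle is precisely this local bookkeeping in part~\ref{op:quiv_mut}. The arrow types in $Q(\mathcal{T})$ are only defined relative to a cyclically shifted order $<_{l}$ (Corollary~\ref{cor:quiv_desc}), so matching the arrows at $A'$ with the cut arrows at $A$ must be carried out carefully across the wraparound; and the verification that $C'$ remains a cut requires controlling \emph{every} $(d+1)$-cycle through $A$, not merely the two distinguished cut cycles of which $A$ is the source and the sink. The delicate point is thus to show that the local picture at $A'$ is exactly the picture of a cut quiver at the swapped vertex, after which both conclusions fall out.
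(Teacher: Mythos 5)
Your route is the same as the paper's---part~(\ref{op:flip_result}) via Lemmas~\ref{lem:adj_cyc}, \ref{lem:onesrc_onesnk}, \ref{lem:vert_form} and Corollary~\ref{cor:mut_rule}, part~(\ref{op:quiv_mut}) via local bookkeeping with Corollary~\ref{cor:quiv_desc}---and your treatment of part~(\ref{op:flip_result}) is essentially the paper's proof. The gap is in the step of part~(\ref{op:quiv_mut}) where you conclude that ``$Q(\mathcal{T})$ and $Q(\mathcal{T}')$ agree away from $A$ and $A'$'' on the grounds that ``a single-step arrow can neither acquire nor lose an intermediate arc.'' That observation only controls single-step arrows. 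It does not rule out the creation of \emph{new multi-step} arrows between two vertices distinct from $A$ and $A'$: by Corollary~\ref{cor:quiv_desc}, deleting $A$ creates an arrow $X \to X + rE_{k}$ with $r \geqslant 2$ whenever $A$ was the unique $d$-arc of $\mathcal{T}$ lying strictly between $X$ and $X + rE_{k}$ on the line of type $k$. Excluding this is exactly where the hypotheses must be used: by Lemma~\ref{lem:onesrc_onesnk} and the proof of Lemma~\ref{lem:vert_form}, the unique in-arrow at $A$ has type $i+1$ and the unique out-arrow has type $i-1$, and a bridging arrow of type $k$ would force $A$ to have an in-arrow and an out-arrow of the same type $k$; combined with the gap condition $a_{j} + 2 = a_{j+1}$ for $j \notin \{i, i-1\}$, this is precisely the content of the paper's closing sentence (``there can be no other new arrows'').

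This omission is not cosmetic, because the locality claim genuinely depends on $i + 1 \not\equiv i - 1 \pmod{d+1}$, i.e.\ on $d \geqslant 2$; an argument that never uses this cannot be correct as it would also apply when $d = 1$, where the claim fails. Concretely, take the fan triangulation $\{13, 14, 15\}$ of the hexagon, whose quiver is $13 \to 14 \to 15$: the vertex $14$ is neither a sink nor a source and lies in the middle of no retrograde path, yet the bistellar flip replaces $14$ by $35$ and the new quiver is the $3$-cycle $13 \to 15 \to 35 \to 13$, which contains the bridging arrow $13 \to 15$ between two old vertices---the quiver is not obtained by the local arrow swap at all. So to complete your proof you must insert the type argument above (which simultaneously makes visible that the proposition tacitly assumes $d \geqslant 2$); with it, your reduction to comparing the arrows at $A$ with those at $A'$, and the remaining comparison you outline, goes through as in the paper.
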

\begin{proof}
By Lemma~\ref{lem:onesrc_onesnk} we have that $A$ is the source of precisely one arrow and the sink of precisely one arrow. Moreover, by Lemma~\ref{lem:vert_form}, the vertex $A$ is such that there is an $a_{i}$ such that for $j \notin \{i, i - 1\}$, we have $a_{j} + 2 = a_{j + 1}$. We assume, by re-ordering, that $i = 0$. Furthermore, the distinguished cut $(d + 1)$-cycle beginning at $A$ must look like \[(a_{0}, a_{1}, \dots , a_{d}) \rightarrow (a_{0}, a_{1}, \dots , a_{d - 1}, a_{d} + 1) \rightarrow \dots \rightarrow (a_{0}, a_{1} + 1, a_{2} + 1, \dots , a_{d} + 1).\] and the distinguished cut $(d + 1)$-cycle ending at $A$ must look like \[(a_{0}, a_{1} - 1, a_{2} - 1, \dots , a_{d} - 1) \rightarrow \dots \rightarrow (a_{0}, a_{1} - 1, a_{2}, a_{3}, \dots, a_{d}) \rightarrow (a_{0}, a_{1}, \dots , a_{d}).\] It follows from Corollary~\ref{cor:mut_rule} that $A$ mutates to \[(a_{1} - 1, a_{2} - 1, \dots, a_{d} - 1, a_{d} + 1) = (a_{1} - 1, a_{1} + 1, \dots, a_{d - 1} + 1, a_{d} + 1),\] settling (\ref{op:flip_result}).

Then we have arrows in $Q(\mathcal{T}')$ given by \[(a_{1} - 1, a_{1} + 1, a_{2} + 1, \dots, a_{d} + 1) \leftarrow (a_{0}, a_{1} + 1, a_{2} + 1, \dots , a_{d} + 1),\] and \[(a_{0}, a_{1} - 1, a_{2} - 1, \dots , a_{d} - 1) \leftarrow (a_{d} + 1, a_{1} - 1, a_{2} - 1, \dots, a_{d} - 1),\] since, by assumption, there are no arrows of type $0$ ending at $(a_{0}, a_{1} + 1, \dots , a_{d} + 1)$ or $(a_{d} + 1, a_{1} - 1, a_{2} - 1, \dots, a_{d} - 1)$. Indeed, these new arrows are precisely the arrows of $C$ which begin or end at $A$. On the other hand, the arrows in $Q_{1}(\mathcal{T})\setminus C$ beginning or ending at $A$ are absent from $Q_{1}(\mathcal{T}')$. Since $a_{j} + 2 = a_{j + 1}$ for $j \notin \{i, i - 1\}$, there can be no other new arrows in $Q(\mathcal{T}')$, thus settling (\ref{op:quiv_mut}).  
\end{proof}

\begin{example}
Proposition~\ref{prop:comp_cut_mut} can be verified by mutating the left-hand triangulation in Figure~\ref{fig:mut_via_cyc} to obtain the triangulation in Figure~\ref{fig:cyc_triang}.
\end{example}

\printbibliography

\end{document}